

\RequirePackage[l2tabu,orthodox]{nag}		
\PassOptionsToPackage{numbers,sort&compress}{natbib}		
\documentclass{article}
\usepackage[final]{neurips_2020}


\usepackage{amsmath}		
\usepackage{amssymb}		
\usepackage{amsfonts}		
\usepackage{amsthm}		    

\usepackage{mathtools}		

\usepackage[utf8]{inputenc}		
\usepackage[T1]{fontenc}		



\usepackage{dsfont}		


\usepackage{inconsolata}

\usepackage[
cal=cm,
]
{mathalfa}


\usepackage[labelfont={bf,small},labelsep=colon,font=small, singlelinecheck=false]{caption}	

\usepackage[dvipsnames,svgnames]{xcolor}		
\colorlet{MyBlue}{DodgerBlue!75!Black}
\colorlet{MyGreen}{DarkGreen!85!Black}

\usepackage{titlesec}
\usepackage{placeins}

\usepackage{wasysym}		


\usepackage{wrapfig}      
\usepackage{subcaption}		
\usepackage{tikz}		
\usetikzlibrary{calc,patterns}

\usepackage{array}		
\newcolumntype{C}[1]{>{\centering\let\newline\\\arraybackslash\hspace{0pt}}m{#1}}
\usepackage{booktabs}		
\usepackage[inline,shortlabels]{enumitem}		
\usepackage{multirow}
\usepackage{threeparttable}   
\usepackage{tabularx}

\setlist[enumerate]{leftmargin=*}
\setlist[itemize]{leftmargin=*}   

\usepackage{microtype}		

\usepackage{acronym}		
\usepackage{latexsym}		
\usepackage{xfrac}		
\usepackage{xspace}		
\usepackage{xparse}     




\usepackage{hyperref}
\hypersetup{
colorlinks=true,
linktocpage=true,
pdfstartview=FitH,
breaklinks=true,
pdfpagemode=UseNone,
pageanchor=true,
pdfpagemode=UseOutlines,
plainpages=false,
bookmarksnumbered,
bookmarksopen=false,
bookmarksopenlevel=1,
hypertexnames=true,
pdfhighlight=/O,
urlcolor=Maroon,linkcolor=MyBlue!60!black,citecolor=MyBlue!70!black,	
pdftitle={},
pdfauthor={},
pdfsubject={},
pdfkeywords={},
pdfcreator={pdfLaTeX},
pdfproducer={LaTeX with hyperref}
}
\pdfstringdefDisableCommands{%
  \def\cref#1{<#1>}%
}

\usepackage[sort&compress,capitalize,nameinlink]{cleveref}		
\crefname{assumption}{Assumption}{Assumptions}
\crefname{assumptionloc}{Assumption}{Assumptions}


\usepackage[textwidth=20mm]{todonotes}		
\setlength{\parskip}{4pt}
\setlength\parindent{0pt}
\addtolength{\skip\footins}{8pt}

\newcommand{\para}[1]{\paragraph{#1\afterhead}}






\usepackage{thmtools,thm-restate,thm-autoref}

\theoremstyle{plain}
\newtheorem{theorem}{Theorem}		
\newtheorem{lemma}{Lemma}		
\newtheorem{proposition}{Proposition}		


\newtheorem*{corollary*}{Corollary}		


\theoremstyle{definition}
\newtheorem{assumption}{Assumption}		

\newtheorem*{definition*}{Definition}		
\newtheorem*{assumption*}{Assumptions}		

\makeatletter		
\newcommand{\asmtag}[1]{
  \let\oldtheassumption\theassumption
  \renewcommand{\theassumption}{#1}
  \g@addto@macro\endassumption{
    \addtocounter{assumption}{0}
    \global\let\theassumption\oldtheassumption}
  }
\makeatother


\theoremstyle{remark}
\newtheorem*{remark*}{Remark}		


\newcounter{proofpart}



\usepackage{macros}






\begin{document}










\title{\Large Explore Aggressively, Update Conservatively:\\[0.1em]
Stochastic Extragradient Methods with Variable Stepsize Scaling}

\author{%
Yu-Guan Hsieh\\
Univ. Grenoble Alpes, LJK\\
38000 Grenoble, France.\\
\texttt{yu-guan.hsieh@univ-grenoble-alpes.fr}
\And
Franck Iutzeler\\
Univ. Grenoble Alpes, LJK\\
38000 Grenoble, France.\\
\texttt{franck.iutzeler@univ-grenoble-alpes.fr}
\And
Jérôme Malick\\
CNRS, LJK\\
38000 Grenoble, France.\\
\texttt{jerome.malick@univ-grenoble-alpes.fr}
\And
Panayotis Mertikopoulos\\
Univ. Grenoble Alpes, CNRS, Inria, Grenoble INP, LIG\\
38000 Grenoble, France.\\
Criteo AI Lab, France\\
\texttt{panayotis.mertikopoulos@imag.fr}}

\maketitle

\begin{abstract}
%
%
Owing to their stability and convergence speed, \acl{EG} methods have become a staple for solving large-scale saddle-point problems in machine learning.
The basic premise of these algorithms is the use of an extrapolation step before performing an update;
thanks to this exploration step, \acl{EG} methods overcome many of the non-convergence issues that plague gradient descent/ascent schemes.
On the other hand, as we show in this paper, running vanilla \acl{EG} with stochastic gradients may jeopardize its convergence, even in simple bilinear models.
To overcome this failure, we investigate a double stepsize \acl{EG} algorithm where the exploration step evolves at a more aggressive time-scale compared to the update step.
We show that this modification allows the method to converge even with stochastic gradients, and we derive sharp convergence rates under an error bound condition.
\end{abstract}

\newacro{LHS}{left-hand side}
\newacro{RHS}{right-hand side}
\newacro{iid}[i.i.d.]{independent and identically distributed}
\newacro{lsc}[l.s.c.]{lower semi-continuous}

\newacro{GAN}{generative adversarial network}
\newacro{SFO}{stochastic first-order oracle}

\acresetall		

\section{Introduction}
\label{sec:intro}

A major obstacle in the training of \acp{GAN} is the lack of an implementable, strongly convergent method based on stochastic gradients. 
The reason for this is that the coupling of two (or more) neural networks gives rise to behaviors and phenomena that do not occur when minimizing an \emph{individual} loss function, irrespective of the complexity of its landscape.
As a result, there has been significant interest in the literature to codify the failures of \ac{GAN} training, and to propose methods that could potentially overcome them.

Perhaps the most prominent of these failures is the appearance of cycles \citep{DISZ18,MPP18,MLZF+19,GBVV+19,FVP19} and, potentially, the transition to aperiodic orbits and chaos \citep{HS98,San10,PS14,PPP17,CP19}.
Surprisingly, non-convergent phenomena of this kind are observed even in very simple saddle-point problems such as two-dimensional, unconstrained bilinear games
\citep{DISZ18,MLZF+19,GBVV+19}.
In view of this, it is quite common to examine the convergence (or non-convergence) of a gradient training scheme in bilinear models before applying it to more complicated, non-convex/non-concave problems.

A key observation here is that the non-convergence of standard gradient descent-ascent methods in bilinear saddle-point problems can be overcome by incorporating a ``gradient extrapolation'' step before performing an update.
The resulting algorithm, due to \citet{Kor76}, is known as the \acdef{EG} method,
and it has a long history in optimization;
for an appetizer, see \citet{FP03}, \citet{JNT11}, \citet{Nem04}, \citet{Nes07}, and references therein.
In particular, the \acl{EG} algorithm converges for all pseudomonotone \aclp{VI} (a large problem class that contains all bilinear games, \cf \cite{Kor76}), and the time-average of the generated iterates achieves an $\bigoh(1/\run)$ rate of convergence in monotone problems \citep{Nem04}.

The above concerns the application of \acl{EG} methods with perfect, \emph{deterministic} gradients and a non-vanishing stepsize.
By contrast, in the type of saddle-point problems that are encountered in machine learning (\acp{GAN}, robust reinforcement learning, etc.), there are two important points to keep in mind:
First, the size of the datasets involved precludes the use of full gradients (for more than a few passes at least), so the method must be run with \emph{stochastic} gradients instead.
Second, because the landscapes encountered are not convex-concave, the method's last iterate is typically preferred to its time-average (which offers no tangible benefits when Jensen's inequality no longer applies).
We are thus led to the following questions:
\begin{enumerate*}
[(\itshape i\hspace*{.5pt}\upshape)]
\item
\emph{are the superior last-iterate convergence properties of the \ac{EG} algorithm retained in the stochastic setting?}
And, if not,
\item
\emph{is there a principled modification that would restore them?}
\end{enumerate*}

\para{Our contributions}

To motivate our analysis, we first analyse a counterexample to show that the last iterate of stochastic \ac{EG} fails to converge, even in bilinear min-max problems where deterministic \ac{EG} methods converge from any initialization.
We then consider a class of \acdef{DSEG} methods with an exploration step evolving more aggressively than the update step and prove it enjoys better convergence guarantees than standard \ac{EG} in stochastic problems. In more detail:

\begin{enumerate}
    \item We show that the \ac{DSEG} algorithm converges with probability $1$ in a large class of problems that contains all monotone saddle-point problems.
    \item We derive explicit convergence rates for the algorithm's last iterate under an error bound condition. This is the first time that such condition is considered in the analysis of stochastic \ac{EG} methods, albeit its popularity in the optimization community.
    \item For bilinear min-max problems in particular, our analysis establishes that stochastic \ac{DSEG} methods converge at a $\bigoh(1/\run)$ rate.
    Prior to our work, last-iterate convergence rate for bilinear min-max games had only been studied in the deterministic setting.\footnote{%
    Let us still mention the work of\;\citet{LBJV+20} which appeared on arxiv a few weeks after the submission of our manuscript: it proved that stochastic Hamiltonian methods applied to (sufficiently) bilinear games ensures also a $\bigoh(1/\run)$ convergence rate. Nonetheless, Hamiltoninan gradient descent is not guaranteed to converge to a solution in monotone games
    and in general when it converges, it may converge to an unstable stationary point.}
    \item To account for non-monotone problems, we also provide local versions of these results that hold with (arbitrarily) high probability.
    Importantly, thanks to the use of a local error bound condition,
    we can obtain local convergence rates even if the Jacobian at a solution contains purely imaginary eigenvalues.
\end{enumerate}

\newcommand\Tstrut{\rule{0pt}{2.6ex}}         
\newcommand\Bstrut{\rule[-0.9ex]{0pt}{0pt}}   

\begin{table}[t]
\small
\centering
\renewcommand{\arraystretch}{1.1}
\begin{tabular}{lcccc}
\toprule
    &
	& Assumption
	& Guarantee
	& Rate\\
	\midrule
	\multirow{3}{*}{\shortstack[l]{Extragradient\\(Mirror-prox)}}
	& \cite{JNT11}
	& monotone & ergodic & $1/\sqrt{\run}$ \\
	& \cite{KS19} & strongly monotone & last & $1/\run$ \\
	& \cite{MLZF+19} & strictly coherent & last & asymptotic \Bstrut\\
    \hline\Tstrut
	\multirow{2}{*}{Increasing batch size} &
	\multirow{2}{*}{\cite{IJOT17}} &
	\multirow{2}{*}{pseudo-monotone} & best & $1/\sqrt{\run}$ \Bstrut\\
	& & & last & asymptotic \\
    \hline\Tstrut
	Repeated sampling & \cite{MKSR+20} & monotone & ergodic & $1/\sqrt{\run}$ \Bstrut\\
	\hline\Tstrut
	SVRE & \cite{CGFLJ19} & strongly monotone + finite sum & last & $e^{-\rho\run}$ \Bstrut\\
	\hline\Tstrut
	\multirow{3}{*}{Double stepsize}
	& \multirow{3}{*}{Ours}
	& variational stability (VS) & last & asymptotic \\
	&& VS + error bound & last & $1/\run^{1/3}$\\
	&& monotone + affine & last & $1/\run$ \\
	\bottomrule
\end{tabular}
\vspace{2ex}
\caption{Summary of known convergence results of stochastic extragradient methods.
For ergodic, last iterate and best iterate guarantees, the 
convergence metrics are respectively dual gap, squared distance to the solution set and squared residual.
Results for single-call 
\cite{HIMM19,LMRZ+20} and non-extragradient methods \cite{KNS12,LBJV+20,RYY19} are not included.} 
\label{tab:overview}
\vskip -1.5em
\end{table}

\para{Related works}

The approaches that have been explored in the literature to ensure the convergence of stochastic first-order methods, in monotone problems and beyond, include variance reduction with increasing batch size and schemes with vanishing regularization (or ``anchoring'').
In regard to the former, \citet{IJOT17} showed that using increasing batch size can ensure convergence in pseudomonotone \aclp{VI}.
As for the latter, \citet{KNS12} and \citet{RYY19} regularized the problem via the addition of a strongly monotone term with vanishing weight;
by properly controlling the weight reduction schedule of this regularization term, it is possible to show the method's convergence in monotone problems.

In contrast to the above, our approach is based on a modification of the choice of the stepsizes, which has only been studied theoretically in the deterministic setting.
\citet{ZY20} recently examined the convergence of several gradient-based algorithms in unconstrained zero-sum bilinear games with deterministic oracle feedback.
Interestingly, they show that the optimal (geometric) rate of convergence in bilinear games is recovered for asymptotically large ``exploration'' parameters $\step\to\infty$ and infinitesimally small ``update'' parameters $\stepalt\to0$.
Even though the setting there is quite different from our own, it is interesting to note that the principle of a smaller update stepsize also applies in their case \textendash\ see also \citet{LS19} and \citet{MKSR+20} for a concurrent series of results, and \citet{RYY19} for an empirical investigation into the stochastic setting.



Regarding convergence counterexamples, in a recent paper, \citet{CGFLJ19} showed that if \ac{EG} is run with a \emph{constant} stepsize and noise with \emph{unbounded} variance, the method's iterates actually diverge at a geometric rate. 
Motivated by this, they proposed a SVRG-type variance reduced \ac{EG} method for finite-sum problems and proved a geometric convergence of the algorithm when the involved operator is strongly monotone.
Compared to this situation, our counterexample illustrates that the non-convergence persists for \emph{any} error distribution with positive variance (no matter how small) and \emph{any} stepsize sequence (constant, decreasing, or otherwise).
In particular, if \ac{EG} is run with noisy feedback, its trajectories remain non-convergent even if the noise is almost surely bounded and a vanishing stepsize schedule is employed.

Finally, to make our paper's position clear with respect to the large corpus of work on stochastic \ac{EG} methods, we further provide an overview of the most relevant results in \cref{tab:overview} and refer the interested reader to the supplement for further discussion.

\section{Preliminaries}
\label{sec:setup}

In this section, we briefly review some basics for the class of problems under consideration \textendash\ namely, saddle-point problems and the associated vector field formulation.

\para{Saddle-point problems}

The flurry of activity surrounding the training of \acp{GAN} has sparked renewed interest in saddle-point problems and zero-sum games.
To define this class of problems formally, consider a value function $\sadobj\from\minvars \times \maxvars\to\R$ which assigns a cost of $\sadobj(\minvar,\maxvar)$ to a player controlling $\minvar\in\minvars$, and a payoff of $\sadobj(\minvar,\maxvar)$ to a player choosing $\maxvar\in\maxvars$.
Then, the \emph{saddle-point problem} associated to a $\sadobj$ consists of finding a profile $(\sol[\minvar], \sol[\maxvar]) \in \minvars \times \maxvars$ such that,
for all $\minvar\in\minvars$, $\maxvar\in\maxvars$, we have: 
\begin{equation}
\tag{SP}
\label{eq:saddle}
\sadobj(\sol[\minvar], \maxvar)
	\leq \sadobj(\sol[\minvar], \sol[\maxvar])
	\leq \sadobj(\minvar, \sol[\maxvar]).
\end{equation}

In this setting, the pair $(\sol[\minvar], \sol[\maxvar])$ is called a (global) \emph{saddle point} of $\sadobj$ \textendash\ or, in game-theoretic terminology, a \acdef{NE}.
For concision and generality, we will often abstract away from $\minvar$ and $\maxvar$ by setting $\point = (\minvar,\maxvar) \in \R^{\vdim}$ (where, in obvious notation, $\vdim = \vdim_{1} + \vdim_{2}$).

\paragraph{Vector field formulation\afterhead}

In most cases of interest, the objective $\sadobj$ is differentiable and is usually accessed through a first-order oracle returning values of the vector field
$\vecfield(\minvar,\maxvar)
	= (\nabla_{\minvar}\sadobj(\minvar,\maxvar), - \nabla_{\maxvar}\sadobj(\minvar,\maxvar)).$
As usual for gradient-based methods, we will frequently (though not always) assume that $\vecfield$ is \emph{Lipschitz continuous:}

\begin{assumption}
\label{asm:Lipschitz}
The field $\vecfield$ is $\lips$-Lipschitz continuous \ie for all $\point,\pointalt\in\vecspace$,
\begin{equation}
\label{eq:Lipschitz}
\tag{LC}
\dnorm{\vecfield(\pointalt) - \vecfield(\point)}
	\leq \lips \norm{\pointalt - \point}.
\end{equation}
\end{assumption}

The importance of the above is that \eqref{eq:saddle} is often intractable, so it is natural to examine instead the first-order stationarity conditions for $\vecfield$, \ie the problem:
\begin{equation}
\label{eq:zero}
\tag{Opt}
\text{Find $\sol\in\vecspace$ such that $\vecfield(\sol)=0$}.
\end{equation}
This ``vector field formulation'' is the unconstrained case of what is known in the literature as a \acdef{VI} problem \textendash\ see \eg \citet{FP03} for a comprehensive introduction.
In what follows, we will not need the full generality of the \ac{VI} framework and we will develop our results in the context of \eqref{eq:zero} above;
our only blanket assumption in this regard is that the set of solutions $\sols$ of \eqref{eq:zero} is nonempty.%

\paragraph{Feedback assumptions}

Throughout the sequel, we will assume that the optimizer can access $\vecfield$ via a \acdef{SFO}.
This means that at every stage $\run$ of an iterative algorithm, the optimizer can call this black-box mechanism at a point $\current \in \R^{\vdim}$ to get a feedback of the form $\current[\svecfield]= \vecfield(\current) + \current[\noise]$
%
%
where $\current[\noise]\in\dspace$ is an additive noise variable.
Our bare-bones assumptions for this oracle will then be as follows:

\newcommand{\retainlabel}[1]{\label{#1}\sbox0{\ref{#1}}}

\begin{assumption}
 \label{asm:noise}
The noise term $\current[\noise]$ of \ac{SFO} satisfies
\begin{subequations}
\label{eq:noise}
\begin{alignat}{2}
&a)\;\;
	\textit{Zero-mean:}
	&\quad
	&\exof{\current[\noise] \given \current[\filter]}
		= 0.
		\hspace{20em}
	\retainlabel{eq:mean}
	\\
&b)\;\;
	\textit{Variance control:}
	&\quad
	& \exof{\dnorm{\current[\noise]}^2 \given
	\current[\filter]} \leq 
	(\noisedev + \varcontrol \norm{\current-\sol})^2
	~\text{ for all }~ \sol\in\sols.
	\retainlabel{eq:variance}
\end{alignat}
\end{subequations}
where $\noisedev, \varcontrol \ge 0$ and $\current[\filter]$ denotes the history (natural filtration) of $\current$.
\end{assumption}

It is important to note that in \eqref{eq:variance}, $\noisedev$ and $\varcontrol$ play different roles.
When $\varcontrol=0$, the condition corresponds to the classic bounded 
variance assumption on the noise.
At the other end of the spectrum, $\noisedev=0$ implies that the noise vanish on the solution set. This kind of condition has been popularized recently in the machine learning community under the name of interpolation \cite{VMLS+19}.
In the most general case, we have both $\noisedev>0$ and $\varcontrol>0$; then condition \eqref{eq:variance} allows the variance of the noise to exhibit quadratic growth with respect to the distance to the solution set.
For example, for a stochastic oracle of the form $\current[\svecfield]=\svecfield(\xi, \current)$ where $\xi$ is a random variable and $\svecfield$ is a Carathéodory function,\footnote{%
That is, $\svecfield(\xi,\cdot)$ is continuous for almost all $\xi$ and $\svecfield(\cdot,\point)$ is measurable for all $\point$.
} this is trivially satisfied if $\svecfield(\xi,\cdot)$ is Lipschitz and the variance of the noise is bounded on $\sols$.
Therefore, \cref{asm:noise} is fairly weak and verified by most relevant problems.


\section{The \acl{EG} method and its limitations}
\label{sec:EG}

As discussed earlier, the go-to method for saddle-point problems and \aclp{VI} is the \acdef{EG} algorithm of \citet{Kor76} and its variants.
Formally, in the general setting of the previous section, the \ac{EG} algorithm can be stated recursively as:
\begin{equation}
\label{eq:EG}
\tag{EG}
\begin{aligned}
\inter = \current - \current[\step] \current[\svecfield]\;,
\quad\quad
\update = \current - \current[\step] \inter[\svecfield]
\end{aligned}
\end{equation}
where $\current[\step] > 0$ is a variable stepsize sequence.
Heuristically, the basic idea of the method is as follows:
starting from a \emph{base} state $\current$, the algorithm first performs a look-ahead step to generate an intermediate \textendash\ or \emph{leading} \textendash\ state $\inter$;
subsequently, the oracle is called at $\inter$, and the method proceeds to a new state $\update$ by taking a step from the \emph{base} state $\current$.
Hence, the generation of the leading state can be seen as an \emph{exploration} step while the second part is the bona fide \emph{update} step.

One of the reasons for the widespread popularity of \eqref{eq:EG} is that it achieves convergence in all monotone problems, without suffering from the non-convergence phenomena (limit cycles or otherwise) that plague vanilla one-step gradient algorithms \citep{FP03}.
However, this guarantee requires the method to be run with deterministic, \emph{perfect} oracle feedback (\ie $\current[\noise] = 0$ for all $\run$);
if the method is run with genuinely stochastic feedback, the situation is considerably more complicated.

To understand the issues involved, it will be convenient to consider the following elementary example:
\begin{equation}
\label{eq:planar}
\min_{\minvar\in\R}
	\max_{\maxvar\in\R} \,
		\minvar\maxvar.
\end{equation}
Trivially,
the vector field associated to \eqref{eq:planar} is $\vecfield(\minvar,\maxvar) = (\maxvar,-\minvar)$ and the problem's unique solution is $(\sol[\minvar],\sol[\maxvar]) = (0,0)$.
Given the problem's simple structure, one would expect that \eqref{eq:EG} should be easily capable of reaching a solution;
however, as we show below, this is not the case.



\begin{restatable}{proposition}{PropNonConv}
\label{prop:non-conv}
Suppose that \eqref{eq:EG} is run on the problem \eqref{eq:planar} with oracle feedback $\current[\svecfield] = \vecfield(\current[\minvar],\current[\maxvar]) + (\current[\snoise],0)$ for some zero-mean random variable $\current[\snoise]$ with variance $\noisevar>0$.
We then have $\liminf_{\run\to\infty} \exof{\current[\minvar]^{2} + \current[\maxvar]^{2}} > 0$, \ie the iterates of \eqref{eq:EG} remain on average a positive distance away from $0$.
\end{restatable}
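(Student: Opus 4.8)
The plan is to reduce the claim to a scalar recursion for the quantity $E_\run := \exof{\current[\minvar]^2 + \current[\maxvar]^2}$ and to show that this recursion is attracted to a strictly positive level. First I would write \eqref{eq:EG} out in coordinates. Since $\vecfield(\minvar,\maxvar) = (\maxvar,-\minvar)$ is \emph{linear}, both the exploration and the update steps are affine in the current iterate and in the injected noise, so all moment computations are exact. Writing $\current[\snoise]$ for the base-point noise and $\inter[\snoise]$ for the (fresh) noise returned by the oracle at the leading state — each zero-mean with variance $\noisevar$ conditionally on $\current[\filter]$ by the \ac{SFO} model of \cref{asm:noise} — a direct computation yields
\begin{align*}
\inter[\minvar] &= \current[\minvar] - \current[\step](\current[\maxvar] + \current[\snoise]),
&\inter[\maxvar] &= \current[\maxvar] + \current[\step]\,\current[\minvar],\\
\update[\minvar] &= (1 - \current[\step]^2)\current[\minvar] - \current[\step]\current[\maxvar] - \current[\step]\inter[\snoise],
&\update[\maxvar] &= \current[\step]\current[\minvar] + (1 - \current[\step]^2)\current[\maxvar] - \current[\step]^2\current[\snoise].
\end{align*}
The structural fact I would exploit is that the deterministic part of this map is a \emph{scaled rotation}: its matrix $M_\run$ satisfies $M_\run^{\top} M_\run = \lambda_\run^2\, I$ with $\lambda_\run^2 := 1 - \current[\step]^2 + \current[\step]^4$, so it multiplies the norm of every vector by exactly $\lambda_\run$.

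Next I would take the conditional expectation given $\current[\filter]$. The noise cross-terms vanish by zero conditional mean, and since $\inter[\snoise]$ and $\current[\snoise]$ enter $\update[\minvar]$ and $\update[\maxvar]$ through orthogonal coordinates there are no further cross-terms; combining this with the scaled-rotation identity gives $\exof{\update[\minvar]^2 + \update[\maxvar]^2 \given \current[\filter]} = \lambda_\run^2(\current[\minvar]^2 + \current[\maxvar]^2) + \current[\step]^2(1 + \current[\step]^2)\noisevar$. The tower property then produces the clean recursion $E_{\run+1} = \lambda_\run^2 E_\run + \current[\step]^2(1 + \current[\step]^2)\noisevar$.

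The crucial observation is that the contraction deficit and the noise injection are \emph{matched}: $1 - \lambda_\run^2 = \current[\step]^2(1 - \current[\step]^2)$ scales like $\current[\step]^2$, exactly as the forcing term does, so the recursion's equilibrium level is of order $\noisevar$ \emph{independently of the stepsize} — shrinking $\current[\step]$ cannot beat the noise floor. To make this rigorous I would substitute $F_\run := E_\run - \noisevar$ and verify the exact identity $F_{\run+1} = \lambda_\run^2 F_\run + 2\current[\step]^4\noisevar$, whose forcing term is nonnegative and whose multiplier satisfies $\lambda_\run^2 > 0$ for every $\current[\step]$. A short case analysis then finishes: (i) if $E_{\run_0} \ge \noisevar$ for some $\run_0$, then $F_{\run_0} \ge 0$ and the nonnegative forcing keeps $F_\run \ge 0$, hence $E_\run \ge \noisevar$, for all $\run \ge \run_0$; (ii) otherwise $E_\run < \noisevar$ for every $\run$, which forces every $\current[\step] < 1$ — a step with $\current[\step] \ge 1$ would give $\lambda_\run^2 \ge 1$ and push $E_{\run+1} \ge E_\run + 2\noisevar > \noisevar$ — so that $F_{\run+1} - F_\run = \current[\step]^2(1 - \current[\step]^2)(-F_\run) + 2\current[\step]^4\noisevar \ge 0$; thus $E_\run$ is nondecreasing, and since the noise already injects a strictly positive amount at the first step we get $\liminf_\run E_\run \ge E_1 > 0$. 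Either way $\liminf_\run E_\run > 0$.

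I expect the main obstacle to be purely in the bookkeeping: propagating the two noise injections through the affine \eqref{eq:EG} map and recognizing the scaled-rotation identity $M_\run^{\top} M_\run = \lambda_\run^2\,I$, which is what collapses the two-dimensional stochastic recursion to a transparent scalar one. Once that recursion is in hand the argument is elementary, the only genuine subtlety being the need to cover \emph{arbitrary} stepsize sequences (in particular $\current[\step] \ge 1$) in the case split, which the identity above handles automatically.
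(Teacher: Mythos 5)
Your proposal is correct and takes essentially the same route as the paper: both reduce the claim to the exact scalar recursion $\ex[\update[\minvar]^{2}+\update[\maxvar]^{2}] = (1-\current[\step]^{2}+\current[\step]^{4})\,\ex[\current[\minvar]^{2}+\current[\maxvar]^{2}] + (\current[\step]^{2}+\current[\step]^{4})\,\noisevar$ and then show that $\noisevar$ is a floor the sequence cannot cross downward \textendash\ the paper via the per-step bound $\ex[\update[\minvar]^{2}+\update[\maxvar]^{2}] \ge \min\bigl(\ex[\current[\minvar]^{2}+\current[\maxvar]^{2}],\,\noisevar\bigr)$ obtained from the same two cases $\current[\step]^{2}\ge 1$ and $\current[\step]^{2}<1$, you via the equivalent shifted identity $F_{\run+1} = (1-\current[\step]^{2}+\current[\step]^{4})F_\run + 2\current[\step]^{4}\noisevar$. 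The only slip is the closing claim ``$\liminf_{\run} E_\run \ge E_1 > 0$'', which fails if the method is initialized exactly at the origin ($E_1=0$); anchor the monotone case at $E_2 \ge (\step_1^{2}+\step_1^{4})\noisevar > 0$ instead, as your own remark about the first noise injection already suggests \textendash\ a one-line fix (and an edge case the paper's induction from $\run=1$ glosses over as well).
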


Importantly, \cref{prop:non-conv} places \emph{no} restrictions on the algorithm's stepsize sequence and the variance of the noise could be arbitrarily small.
Relegating the details to the appendix, the key to showing this result is the recursion
\makeatletter%
\if@twocolumn%
    \begin{align}
    \ex[\update[\minvar]^{2} + \update[\maxvar]^{2}]
    	&= (1-\current[\step]^{2} + \current[\step]^{4}) \, \ex[\current[\minvar]^{2} + \current[\maxvar]^{2}]
    	\notag\\
    	&+ (1 + \current[\step]^{2}) \current[\step]^{2} \, \noisevar. \notag
    \label{eq:energy-planar}
    \end{align}
\else
    \begin{equation}
    \ex[\update[\minvar]^{2} + \update[\maxvar]^{2}]
    = (1-\current[\step]^{2} + \current[\step]^{4}) \, \ex[\current[\minvar]^{2} + \current[\maxvar]^{2}]
    + (1 + \current[\step]^{2}) \current[\step]^{2} \, \noisevar. \notag
    \label{eq:energy-planar}
    \end{equation}
\fi
from which it follows that $\liminf_{\run} \exof{\current[\minvar]^{2} + \current[\maxvar]^{2}} > 0$.
In turn, this implies that the iterates of \eqref{eq:EG} remain on average a positive distance away from the origin.
This behavior is illustrated clearly in \cref{fig:stoch-bilinear} which shows a typical non-convergent trajectory of \eqref{eq:EG} in the planar problem \eqref{eq:planar}.


\begin{figure}[t]
\centering
\begin{minipage}[b]{.45\textwidth}
	\centering
    \includegraphics[width=.8\linewidth]{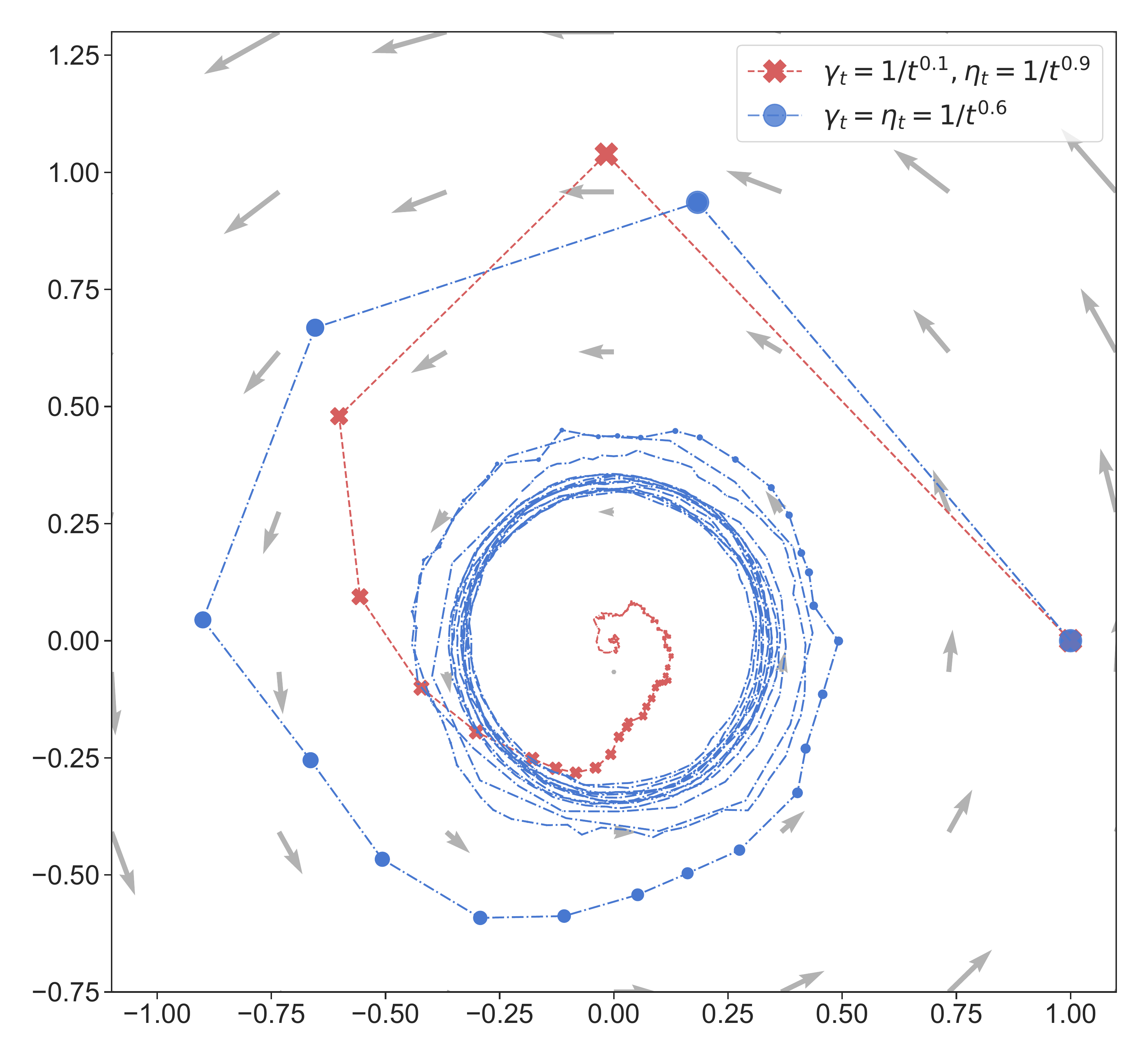}
	\caption[.]{
	Behavior of \eqref{eq:EG} and \eqref{eq:DSEG} on Problem~\eqref{eq:planar} with Gaussian oracle noise.
	Even with a vanishing, square-summable stepsize $\current[\step]=1/\run^{0.6}$, the iterates of \eqref{eq:EG} cycle;
	in contrast, \eqref{eq:DSEG} with $\current[\step]=1/\run^{0.1}$ and $\current[\stepalt]=1/\run^{0.9}$ 
	converges.
	}
	\label{fig:stoch-bilinear}
\end{minipage}
\hspace{1cm}
\begin{minipage}[b]{.45\textwidth}
    \centering

\centering
\begin{tikzpicture}[scale=4]
\clip (-0.2,-0.15) rectangle (1.25, 1.2);

\draw [very thin, gray!15!white] (0,0) grid[step=0.1] (1,1);

    \colorlet{color1}{blue!50}

    \draw (0.75,0.3) node[rotate=-45] {\color{color1!40!black} \tiny $\exponent_\step+\exponent_\step\leq 1$};

\filldraw[thick, draw=color1 , fill=color1 ,  fill opacity=0.1]
    (0,0) -- (0,1)                   
 -- plot [domain=0:1] (\x,1-\x)   
 -- (1,0) -- cycle;

 \colorlet{color2}{red!50}

 \draw (0.75,1.05) node[rotate=0] {\color{color2!80!black} \tiny $\exponent_\stepalt > 1/2$};

\filldraw[dashed, draw=color2 , fill=color2 ,  fill opacity=0.15]
 (0,0.5) -- (1,0.5) -- (1,1) -- (0,1) -- cycle;

 \colorlet{color3}{yellow!80!black}

 \draw (1.05,0.3) node[rotate=-90] {\color{color3!60!black} \tiny $2\exponent_\step+\exponent_\step> 1$};

\filldraw[dashed, draw=color3 , fill=color3 ,  fill opacity=0.15]
(1,1)                   
-- plot [domain=0:0.5] (\x,1-2*\x)   
-- (1,0) -- (1,1) -- cycle;

 \colorlet{color4}{magenta!50}

 \draw (0.15,0.3) node[rotate=90] {\color{color4!70!black} \tiny $\exponent_\step > 1/q$};

\filldraw[dashed, draw=color4 , fill=color4 ,  fill opacity=0.15]
(1,1) -- (1,0) -- (0.2,0) -- (0.2,1) -- cycle; 


\draw[thin,black!50] (0,0) -- (1,1);
 \draw[thin, draw=black!80,<-] (0.25,0.25) to [out=-45,in=-180] (0.3,0.15) node[right] {\tiny\color{black!80} Vanilla EG policies};

 \colorlet{colorMain}{green!70!black}

\filldraw[ultra thick, dashed, draw=colorMain , fill=colorMain ,  fill opacity=0.3]
(0,1)                   
-- plot [domain=0:0.25] (\x,1-2*\x)   
-- (0.5,0.5)  -- cycle; 

\draw[ultra thick, draw=colorMain] (0,1) -- (0.5,0.5);

\draw[thick, draw=colorMain,<-] (0.15,0.85) to [out=40,in=-180] (0.45,0.9) node[right] {\footnotesize \cref{asm:steps}};

\colorlet{colorMain2}{blue!50!green!50!black}

\draw[pattern=horizontal lines, pattern color=colorMain2]
(0.2,0.8) -- (0.2,0.6) -- (0.25,0.5) -- (0.5,0.5) -- cycle; 

\draw[ draw=colorMain2] (0.2,0.8) -- (0.5,0.5);

\draw[thick, draw=colorMain2,<-] (0.27,0.73) to [out=40,in=-180] (0.45,0.8) node[right] {\footnotesize \color{colorMain2} Local results};


 \draw[very thick, ->] (-0.05,0) -- (1.1,0);\draw (1.1,0) node[right] {$\exponent_\step$};\draw [very thick,->] (0,-0.05) -- (0,1.1);\draw (0,1.1) node[above] {$\exponent_\stepalt$};

 \draw[-] (0.02,1) -- (-0.02,1) node[left] {$1$};
 \draw[-] (0.02,0.5) -- (-0.02,0.5) node[left] {$0.5$};
 \draw[-] (1,0.02) -- (1,-0.02) node[below] {$1$};
 \draw[-] (0.5,0.02) -- (0.5,-0.02) node[below] {$0.5$};
 \draw (-0.05,-0.05) node {$0$};

 \draw[-] (0.2,0.02) -- (0.2,-0.02) node[below] {$1/q$};


 \filldraw[thin,black!80] (0,1)  circle[radius=0.3pt]; 
 \draw[thin, draw=black!80,<-] (0.01,1.01) to [out=45,in=-180] (0.15,1.15) node[right] {\tiny\color{black!80} $\mathcal{O}\left(\frac{1}{\run}\right)$ for Affine Operators};

\filldraw[thin,black!80] (0.33333333,0.6666666666)  circle[radius=0.3pt]; 
\draw[thin, draw=black!80,<-] (0.34,0.67) to [out=45,in=-180] (0.5,0.6) node[right] {\tiny\color{black!80}   $\mathcal{O}\left(\frac{1}{\run^{{1}/{3}}}\right)$};

\end{tikzpicture}
    \caption{The stepsize exponents allowed by \cref{asm:steps} for convergence (shaded green).
    Dashed lines are strict frontiers.
    Note that vanilla \ac{EG} (the separatrix $\exponent_{\step} = \exponent_{\stepalt}$) passes just outside of this 
    region, explaining the method's failure.}
    \label{fig:stepsizes}
\end{minipage}
\vskip -\medskipamount
\end{figure}

\section{Extragradient with stepsize scaling}
\label{sec:DSEG}

At a high level, \cref{prop:non-conv} suggests that
the benefit of the exploration step is negated by the noise
as the iterates of \eqref{eq:EG} get closer to the problem's solution set.
To rectify this issue, we will consider a more flexible, \acdef{DSEG} method of the form
\begin{equation}
\label{eq:DSEG}
\tag{DSEG}
\begin{aligned}
\inter = \current - \current[\step] \current[\svecfield],
\quad\quad
\update = \current - \current[\stepalt] \inter[\svecfield],
\end{aligned}
\end{equation}
with $\current[\step]\ge\current[\stepalt]>0$.
The key idea in \eqref{eq:DSEG} is that the scaling of the method's stepsize parameters affords us an extra degree of freedom which can be tuned to order.
In particular, motivated by the failure of \eqref{eq:EG} described in the previous section, we will take a stepsize scaling schedule in which the exploration step evolves at a more aggressive time-scale compared to the update step.
In so doing, the method will keep exploring (possibly with a near-constant stepsize) while maintaining a cautious update policy that does not blindly react to the observed oracle signals.

For illustration and comparison, we plot in \cref{fig:stoch-bilinear} an instance of this method with a fairly aggressive exploration schedule and a respectively conservative update policy.
In contrast to \eqref{eq:EG}, the iterates of \eqref{eq:DSEG} now converge to a solution.
We encode this as a positive counterpart to \cref{prop:non-conv} below:


{
\addtocounter{proposition}{-1}
\renewcommand{\theproposition}{\ref{prop:non-conv}$'$}%
\begin{restatable}{proposition}{PropRemedy}
\label{prop:non-conv2}
Suppose that \eqref{eq:DSEG} is run on the problem \eqref{eq:planar} with oracle feedback $\current[\svecfield] = \vecfield(\current[\minvar],\current[\maxvar]) + (\current[\snoise],0)$ for some zero-mean random variable $\current[\snoise]$ with variance $\noisevar>0$.
If the method's stepsize policies are of the form $\current[\step] = 1/\run^{\pexp}$ and $\current[\stepalt] = 1/\run^{\qexp}$ for some $\qexp > \pexp \geq 0$ with $\pexp+\qexp \leq 1$, we have
$\lim_{\run\to\infty} \exof{\current[\minvar]^{2} + \current[\maxvar]^{2}} \to 0$.
\end{restatable}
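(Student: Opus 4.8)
The plan is to collapse the two-dimensional stochastic dynamics into a single scalar recursion for the energy $d_\run \coloneqq \exof{\current[\minvar]^{2} + \current[\maxvar]^{2}}$ and then feed it into a standard ``contraction with vanishing noise'' lemma. First I would write \eqref{eq:DSEG} out explicitly on \eqref{eq:planar}. Since $\vecfield(\minvar,\maxvar) = (\maxvar,-\minvar)$ and the oracle perturbs only the first coordinate, the leading state is $\inter = (\current[\minvar] - \current[\step]\current[\maxvar] - \current[\step]\current[\snoise],\ \current[\maxvar] + \current[\step]\current[\minvar])$, and pushing it back through the update step (which queries the oracle at $\inter$ with its own zero-mean draw $\inter[\snoise]$) gives
\begin{align}
\update[\minvar]
  &= (1 - \current[\step]\current[\stepalt])\,\current[\minvar] - \current[\stepalt]\,\current[\maxvar] - \current[\stepalt]\,\inter[\snoise],
  \\
\update[\maxvar]
  &= \current[\stepalt]\,\current[\minvar] + (1 - \current[\step]\current[\stepalt])\,\current[\maxvar] - \current[\step]\current[\stepalt]\,\current[\snoise].
\end{align}
The deterministic part is a scaled rotation, whose matrix $\left(\begin{smallmatrix} 1-\current[\step]\current[\stepalt] & -\current[\stepalt] \\ \current[\stepalt] & 1-\current[\step]\current[\stepalt]\end{smallmatrix}\right)$ cancels the $\current[\minvar]\current[\maxvar]$ cross terms and multiplies the energy by $(1-\current[\step]\current[\stepalt])^{2}+\current[\stepalt]^{2}$. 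Squaring, summing, and taking conditional expectations with respect to the natural filtration — where \eqref{eq:mean} kills every cross term between the deterministic part and the noise, and the two noises land in separate coordinates so no $\inter[\snoise]\,\current[\snoise]$ term survives (hence no independence between them is needed) — yields the \emph{exact} recursion
\begin{equation}
d_{\run+1}
  = \bigl(1 - 2\current[\step]\current[\stepalt] + \current[\stepalt]^{2} + \current[\step]^{2}\current[\stepalt]^{2}\bigr)\,d_\run
  + \current[\stepalt]^{2}\,(1 + \current[\step]^{2})\,\noisevar .
\end{equation}

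Next I would isolate the contraction gap $r_\run \coloneqq 2\current[\step]\current[\stepalt] - \current[\stepalt]^{2} - \current[\step]^{2}\current[\stepalt]^{2}$ and the noise input $s_\run \coloneqq \current[\stepalt]^{2}(1+\current[\step]^{2})\,\noisevar$, so that $d_{\run+1} = (1 - r_\run)\,d_\run + s_\run$. Substituting $\current[\step] = \run^{-\pexp}$ and $\current[\stepalt] = \run^{-\qexp}$ gives $r_\run = 2\run^{-(\pexp+\qexp)} - \run^{-2\qexp} - \run^{-2(\pexp+\qexp)}$ and $s_\run = \run^{-2\qexp}(1+\run^{-2\pexp})\,\noisevar$. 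The crucial point is that the two ``bad'' positive terms buried in $1-r_\run$ are of strictly higher order than the leading contraction $2\run^{-(\pexp+\qexp)}$: indeed $2\qexp > \pexp+\qexp$ because $\qexp>\pexp$, and $2(\pexp+\qexp) > \pexp+\qexp$ because $\pexp+\qexp>0$. Consequently $r_\run = (2+o(1))\,\run^{-(\pexp+\qexp)}$, so in particular $0 < r_\run < 1$ for all sufficiently large $\run$.

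Finally I would invoke the elementary lemma that a recursion $d_{\run+1} = (1-r_\run)\,d_\run + s_\run$ with $r_\run\in(0,1)$ forces $d_\run\to 0$ whenever $\sum_\run r_\run = \infty$ and $s_\run/r_\run \to 0$. Both hypotheses are consequences of the exponent constraints: $\sum_\run r_\run = \infty$ since $r_\run \asymp \run^{-(\pexp+\qexp)}$ and $\pexp+\qexp\le 1$ (this is exactly where the upper bound on the exponents is used), while $s_\run/r_\run \asymp \run^{-2\qexp}/\run^{-(\pexp+\qexp)} = \run^{\pexp-\qexp}\to 0$ since $\qexp>\pexp$ (this is where the aggressive exploration / conservative update split pays off). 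This delivers $\lim_\run d_\run = 0$, which is the claim.

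The computation is essentially routine; the only genuinely delicate step is the order comparison inside $r_\run$, which is the whole point of the result. It is worth framing it against vanilla \eqref{eq:EG} ($\pexp=\qexp$): there the same recursion yields $s_\run/r_\run \to \noisevar > 0$ instead of $0$, so the noise input never becomes negligible relative to the contraction — precisely the mechanism behind the non-convergence of \cref{prop:non-conv}.
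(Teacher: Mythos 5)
Your proposal is correct and follows essentially the same route as the paper's proof: the same explicit coordinate-wise expansion of \eqref{eq:DSEG} on \eqref{eq:planar}, the same exact energy recursion $d_{\run+1} = \bigl((1-\current[\step]\current[\stepalt])^2+\current[\stepalt]^2\bigr)d_\run + \current[\stepalt]^2(1+\current[\step]^2)\noisevar$, and the same exponent comparison under $\qexp>\pexp\geq 0$, $\pexp+\qexp\leq 1$. The only difference is cosmetic: the paper closes with Chung's lemmas (\cref{lem:chung1954,lem:chung1954-bis}), which additionally yield the explicit rate $\bigoh(1/\run^{\qexp-\pexp})$, whereas your more elementary contraction lemma ($\sum_\run r_\run=\infty$, $s_\run/r_\run\to 0$) gives only the qualitative limit \textendash\ which is all the statement requires.
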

}

From an analytic viewpoint, what distinguishes \eqref{eq:EG} from \eqref{eq:DSEG} is the following refined bound:
\smallbreak

\begin{restatable}{lemma}{LemmaDescent}
\label{lem:dsseg-descent-stoch}
Under \cref{asm:Lipschitz,asm:noise}, for all $\run = \running$ and all $\sol \in \sols$, it holds
\begin{align}
    \exof{\norm{\update - \sol}^{2} \given \current[\filter]}
	&\leq (1+\current[\Cons]\varcontrol^2)\norm{\current-\sol}^{2}
	- 2\current[\stepalt] \exof{\product{\vecfield(\inter)}{\inter-\sol} \given \current[\filter]} \notag
	\\
	&
	- \current[\step] \current[\stepalt] (
	    1-\current[\step]^{2}\lips^{2}
	    -8\current[\step]\current[\stepalt]\varcontrol^2)
		\norm{\vecfield(\current)}^{2}
	+ \current[\Cons]\noisevar,
    \label{eq:dsseg-descent-stoch}
\end{align}
with constant $\current[\Cons]=4\current[\step]^2\current[\stepalt]\lips
            + 2\current[\step]^3\current[\stepalt]\lips^2
            + 4\current[\stepalt]^2
            + 16\current[\step]^2\current[\stepalt]^2\varcontrol^2$.
\end{restatable}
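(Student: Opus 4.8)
The plan is to start from the update, peel off a dominant negative multiple of $\norm{\vecfield(\current)}^2$, keep the ``variational'' term $-2\current[\stepalt]\product{\vecfield(\inter)}{\inter-\sol}$ intact, and funnel every other stochastic or higher-order term into the single constant $\current[\Cons]$. I would work with two nested conditionings: the base filtration $\current[\filter]$, and the intermediate $\sigma$-algebra $\inter[\filter]$ generated by $\current[\filter]$ together with the exploration noise $\current[\noise]$, so that $\current$, $\inter$, $\vecfield(\current)$ and $\current[\svecfield]$ are $\inter[\filter]$-measurable while the leading-point noise $\inter[\noise]$ is still zero-mean and variance-controlled given $\inter[\filter]$. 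Using the update step $\update = \current - \current[\stepalt]\inter[\svecfield]$ and the substitution $\current - \sol = (\inter - \sol) + \current[\step]\current[\svecfield]$ coming from the exploration step $\inter = \current - \current[\step]\current[\svecfield]$, I get
\[
\norm{\update - \sol}^{2} = \norm{\current - \sol}^{2} - 2\current[\stepalt]\product{\inter[\svecfield]}{\inter - \sol} - 2\current[\step]\current[\stepalt]\product{\inter[\svecfield]}{\current[\svecfield]} + \current[\stepalt]^{2}\norm{\inter[\svecfield]}^{2}.
\]
Taking $\exof{\cdot \given \inter[\filter]}$ turns $\inter[\svecfield]$ into $\vecfield(\inter)$ in the two middle terms (recovering, after the outer tower step, exactly the surviving term $-2\current[\stepalt]\exof{\product{\vecfield(\inter)}{\inter-\sol}\given\current[\filter]}$), while $\current[\stepalt]^2\norm{\inter[\svecfield]}^2$ becomes $\current[\stepalt]^2\norm{\vecfield(\inter)}^2 + \current[\stepalt]^2\exof{\dnorm{\inter[\noise]}^2 \given \inter[\filter]}$, the last factor bounded by $(\noisedev + \varcontrol\norm{\inter - \sol})^2$ through \cref{asm:noise}.

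The algebraic heart is the block $\current[\stepalt]^2\norm{\vecfield(\inter)}^2 - 2\current[\step]\current[\stepalt]\product{\vecfield(\inter)}{\vecfield(\current)}$ (after writing $\current[\svecfield] = \vecfield(\current) + \current[\noise]$ and peeling off the noise). Setting $\delta = \vecfield(\inter) - \vecfield(\current)$ and expanding, this equals $(\current[\stepalt]^2 - 2\current[\step]\current[\stepalt])\norm{\vecfield(\current)}^2 - 2\current[\stepalt](\current[\step]-\current[\stepalt])\product{\delta}{\vecfield(\current)} + \current[\stepalt]^2\norm{\delta}^2$. The key is that $\current[\stepalt]\leq\current[\step]$ gives $\current[\stepalt]^2 - 2\current[\step]\current[\stepalt] \leq -\current[\step]\current[\stepalt]$ with exactly enough surplus $\current[\stepalt](\current[\step]-\current[\stepalt])\norm{\vecfield(\current)}^2$ to absorb the $\norm{\vecfield(\current)}^2$ half of a single Young's inequality on the cross term, leaving $-\current[\step]\current[\stepalt]\norm{\vecfield(\current)}^2 + \current[\step]\current[\stepalt]\norm{\delta}^2$. \cref{asm:Lipschitz} with $\norm{\inter-\current}=\current[\step]\norm{\current[\svecfield]}$ then gives $\norm{\delta}^2 \leq \current[\step]^2\lips^2\norm{\current[\svecfield]}^2$; expanding $\norm{\current[\svecfield]}^2 = \norm{\vecfield(\current)}^2 + 2\product{\vecfield(\current)}{\current[\noise]} + \dnorm{\current[\noise]}^2$ and taking expectations produces the $\current[\step]^3\current[\stepalt]\lips^2\norm{\vecfield(\current)}^2$ correction (i.e.\ the $\current[\step]^2\lips^2$ inside the coefficient) together with a $\noisevar$/$\varcontrol^2\norm{\current-\sol}^2$ remainder that feeds $\current[\Cons]$.

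The main obstacle is that $\inter = \current - \current[\step](\vecfield(\current)+\current[\noise])$ makes $\vecfield(\inter)$ \emph{statistically dependent} on $\current[\noise]$, so the noise term $-2\current[\step]\current[\stepalt]\product{\vecfield(\inter)}{\current[\noise]}$ does not simply vanish under $\exof{\cdot\given\current[\filter]}$. I would split it as $-2\current[\step]\current[\stepalt]\product{\vecfield(\current)}{\current[\noise]} - 2\current[\step]\current[\stepalt]\product{\delta}{\current[\noise]}$: the first piece is killed by the zero-mean property of \cref{asm:noise}, and the second is bounded by Cauchy--Schwarz and \cref{asm:Lipschitz} by $2\current[\step]^2\current[\stepalt]\lips\norm{\current[\svecfield]}\dnorm{\current[\noise]}$, whence a further Young split into multiples of $\dnorm{\current[\noise]}^2$ and $\norm{\vecfield(\current)}^2$. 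Two devices close the constants: (i) converting leftover gradient factors via $\norm{\vecfield(\current)} = \dnorm{\vecfield(\current)-\vecfield(\sol)} \leq \lips\norm{\current-\sol}$ (since $\vecfield(\sol)=0$), turning stray $\norm{\vecfield(\current)}$ into distance-to-solution; and (ii) invoking the variance bound of \cref{asm:noise} at both $\current$ and $\inter$, using $\norm{\inter-\sol}\leq\norm{\current-\sol}+\current[\step]\norm{\current[\svecfield]}$ to re-express the leading-point variance in terms of $\norm{\current-\sol}$, $\norm{\vecfield(\current)}$ and $\noisevar$.

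I expect the genuine difficulty to be purely the bookkeeping: choosing the Young weights so that the surplus negative budget absorbs precisely the intended $\norm{\vecfield(\current)}^2$ pieces, and repeatedly using $(a+b)^2\leq 2a^2+2b^2$ and the variance bound so that the positive remainders assemble \emph{exactly} into $\current[\Cons]=4\current[\step]^2\current[\stepalt]\lips + 2\current[\step]^3\current[\stepalt]\lips^2 + 4\current[\stepalt]^2 + 16\current[\step]^2\current[\stepalt]^2\varcontrol^2$ (multiplying both $\varcontrol^2\norm{\current-\sol}^2$ and $\noisevar$) and into the $8\current[\step]\current[\stepalt]\varcontrol^2$ correction of the $\norm{\vecfield(\current)}^2$ coefficient. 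No single step is deep; the content is entirely in matching these constants.
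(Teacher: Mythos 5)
Your skeleton matches the paper's proof quite closely: the same expansion of $\norm{\update-\sol}^{2}$, the same use of $\current[\stepalt]\le\current[\step]$ to complete a square (your Young split with the surplus $\current[\stepalt](\current[\step]-\current[\stepalt])\norm{\vecfield(\current)}^{2}$ lands on exactly the paper's expression $-\current[\step]\current[\stepalt]\norm{\vecfield(\current)}^{2}+\current[\step]\current[\stepalt]\norm{\vecfield(\inter)-\vecfield(\current)}^{2}$), and the same two-stage conditioning to dispose of the leading-point noise $\inter[\noise]$. You also correctly identify the crux: the coupling term $\exof{\product{\vecfield(\inter)}{\current[\noise]}\given\current[\filter]}$ does not vanish because $\inter$ depends on $\current[\noise]$. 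The gap is in how you propose to bound it. You subtract $\vecfield(\current)$ and control the remainder $\product{\vecfield(\inter)-\vecfield(\current)}{\current[\noise]}$ by Cauchy--Schwarz and \eqref{eq:Lipschitz}, which gives $\current[\step]\lips\norm{\current[\svecfield]}\dnorm{\current[\noise]}$. This entangles signal with noise: it leaves a product $\norm{\vecfield(\current)}\dnorm{\current[\noise]}$ carrying the coefficient $2\current[\step]^{2}\current[\stepalt]\lips$, and no choice of Young weight can fit it into the stated constants. If you weight the split so that the $\norm{\vecfield(\current)}^{2}$ piece stays at the claimed order $\current[\step]^{3}\current[\stepalt]\lips^{2}$, the companion noise piece acquires a coefficient of order $\current[\step]\current[\stepalt]$, which is precisely the quantity that \cref{asm:steps} forces to be non-summable, so the resulting ``constant'' cannot play the role of $\current[\Cons]$ downstream; if instead you keep the noise piece at order $\current[\step]^{2}\current[\stepalt]\lips$, the signal piece degrades the coefficient of $\norm{\vecfield(\current)}^{2}$ by a \emph{first-order} term $\current[\step]\lips\cdot\current[\step]\current[\stepalt]$, whereas the lemma allows only the second-order corrections $\current[\step]^{2}\lips^{2}$ and $8\current[\step]\current[\stepalt]\varcontrol^{2}$. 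Your fallback device $\norm{\vecfield(\current)}\le\lips\norm{\current-\sol}$ is structurally worse: it injects terms such as $\current[\step]^{2}\current[\stepalt]\lips^{2}\noisedev\norm{\current-\sol}$, and any Young split of these leaves a positive multiple of $\norm{\current-\sol}^{2}$ that is not proportional to $\varcontrol^{2}$; since the claimed bound has coefficient exactly $1+\current[\Cons]\varcontrol^{2}$ on $\norm{\current-\sol}^{2}$ (hence exactly $1$ when $\varcontrol=0$), no such term can be tolerated.

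The single missing idea is the paper's choice of anchor in the zero-mean subtraction: introduce the \emph{deterministic} leading state $\interdet=\current-\current[\step]\vecfield(\current)$, which is $\current[\filter]$-measurable, and subtract $\vecfield(\interdet)$ rather than $\vecfield(\current)$ inside the coupling term. Because $\inter-\interdet=-\current[\step]\current[\noise]$, Lipschitz continuity yields
\begin{equation}
\notag
\bigl|\product{\vecfield(\inter)-\vecfield(\interdet)}{\current[\noise]}\bigr|
\;\le\;
\current[\step]\lips\dnorm{\current[\noise]}^{2},
\end{equation}
a pure noise quadratic with no $\norm{\vecfield(\current)}$ factor, which after the variance bound $\exof{\dnorm{\current[\noise]}^{2}\given\current[\filter]}\le 2\noisevar+2\varcontrol^{2}\norm{\current-\sol}^{2}$ feeds exactly the $4\current[\step]^{2}\current[\stepalt]\lips$ piece of $\current[\Cons]$. (The paper also organizes the very first decomposition around $\interdet$, writing $\current-\sol=(\interdet-\sol)+\current[\step]\vecfield(\current)$ instead of your $(\inter-\sol)+\current[\step]\current[\svecfield]$, but that part is interchangeable; the essential and non-negotiable step is using $\interdet$ as the reference point when exploiting \eqref{eq:mean}, since $\inter$ is within pure-noise distance $\current[\step]\dnorm{\current[\noise]}$ of $\interdet$ but only within signal-plus-noise distance $\current[\step]\norm{\current[\svecfield]}$ of $\current$.) Without this device your route can at best deliver an inequality of the same general shape with strictly weaker constants, not the lemma as stated.
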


The proof of \cref{lem:dsseg-descent-stoch}, which we defer to the supplement, relies on a careful analysis of the update between successive iterates to separate the deterministic and the stochastic effects.
Analyzing the bound of \cref{lem:dsseg-descent-stoch} term-by-term gives a clear picture of how an aggressive exploration stepsize policy can be helpful:%
\begin{itemize}
\setlength{\itemsep}{0pt}
\item
The term $\current[\step] \current[\stepalt] (1-\current[\step]^{2}\lips^{2} -8\current[\step]\current[\stepalt]\varcontrol^2) \norm{\vecfield(\current)}^{2}$ provides a consistently negative contribution as long as $\sup_{\run} \current[\step] < 1/3\max(\smooth, \varcontrol)$.
\item
The term $\current[\Cons]$ 
is antagonistic and needs to be made as small as possible.
\item
The term $\exof{\product{\vecfield(\inter)}{\inter - \sol} \given \current[\filter]}$ plays a lesser role since it is non-negative for variational stable problems (see upcoming \cref{asm:mono}) and is even identically zero in bilinear problems.
\end{itemize}
\vspace{-\smallskipamount}

Therefore, to obtain convergence, one needs the coefficient $\current[\step] \current[\stepalt]$ to be as \emph{large} as possible 
and, concurrently, each of the terms $\current[\step]^{2}\current[\stepalt]$, $ \current[\step]^{3}\current[\stepalt]$, $\current[\stepalt]^{2}$ and $\current[\step]^{2}\current[\stepalt]^2$ that appear in $\current[\Cons]$ 
should be as \emph{small} as possible.
Formally, this would lead to the requirement $\sum_{\run} \current[\step]\current[\stepalt] = \infty$ and $\sum_{\run} \current[\step]^2\current[\stepalt] + \current[\stepalt]^{2} < \infty$.
These conditions can be simultaneously achieved by a suitable choice of $\current[\step]$ and $\current[\stepalt]$ (\cf \cref{prop:non-conv2} above), but they are \emph{mutually exclusive} if $\current[\step] = \current[\stepalt]$.
This observation is the key motivation for the scale separation between the exploration and the update mechanisms in \eqref{eq:DSEG}, and is the principal reason that \eqref{eq:EG} fails to converge in bilinear problems.


\section{Convergence analysis}
\label{sec:results}

We now proceed with our main results for the \ac{DSEG} algorithm.
We begin in \cref{subsec:asym} with an asymptotic convergence analysis for \eqref{eq:DSEG};
subsequently, in \cref{subsec:rate}, we examine the algorithm's rate of convergence;
finally, in \cref{subsec:affine}, we zero in on affine problems.
Given our interest in non-monotone problems, we make a clear distinction between global results (which require global assumptions) and local ones (which apply to more general problems).

\subsection{Asymptotic convergence}
\label{subsec:asym}

\para{Global convergence}

\!\!Our 
assumption 
for global convergence is a 
variational stability condition.

\begin{assumption}
\label{asm:mono}
The operator $\vecfield$ satisfies $\product{\vecfield(\point)}{\point-\sol}\ge0$ for all $\point\in\vecspace$, $\sol\in\sols$.
\end{assumption}

\Cref{asm:mono} is verified for all monotone operators
but it also encompasses a wide range of non-monotone problems;
for an overview see \eg\cite{FP03,IJOT17,KS19,LMRZ+20,MLZF+19} and references therein.

To leverage this assumption, we will further need the algorithm's update step to decrease sufficiently quickly relative to the corresponding exploration step.
Formally (and with a fair degree of hindsight), this boils down to the following:

\begin{assumption}
\label{asm:steps}
The stepsizes of \eqref{eq:DSEG} satisfy
$\sum_{\run}\current[\step]\current[\stepalt] = \infty$,\;
$\sum_{\run}\current[\stepalt]^{2} < \infty $,\;
and
$\sum_{\run}\current[\step]^2 \current[\stepalt] < \infty$.
\end{assumption}

\Cref{asm:steps} essentially posits that $\current[\stepalt]/\current[\step] \to 0$ as $\run\to\infty$, so it reflects precisely the principle of ``aggressive exploration, conservative updates''.
In particular, \cref{asm:steps} rules out the choice $\current[\step] = \current[\stepalt]$ which would yield the vanilla \ac{EG} algorithm, providing further evidence for the use of a double stepsize policy.
A typical stepsize policy for \eqref{eq:DSEG} is 
\begin{align}
\label{eq:stepsizes}
    \current[\step]=\frac{\step}{(\run+\stepoffset)^{\exponent_{\step}}} ~~~ \text{ and  } ~~~ \current[\stepalt]= \frac{\stepalt}{(\run+\stepoffset)^{\exponent_{\stepalt}}}
\end{align}
for some $\gamma,\eta,\stepoffset>0$ and exponents $\exponent_{\step},\exponent_{\stepalt}\in [0,1]$. \cref{asm:steps} then translates as $\exponent_{\step}+\exponent_{\stepalt} \leq 1$, $2\exponent_{\stepalt}>1$, and $2\exponent_{\step}+\exponent_{\stepalt}>1 $ as represented in \cref{fig:stepsizes}.
With this in mind, we have the following convergence result.

\begin{restatable}{theorem}{ThmGlobalAs}
\label{thm:dsseg-general-as}
Let \cref{asm:Lipschitz,asm:noise,asm:mono,asm:steps} hold and $\sup_{\run} \current[\step] < 1/3\max(\smooth, \varcontrol)$, then the iterates $\current$ of $\eqref{eq:DSEG}$ converge almost surely to a solution $\sol$ of \eqref{eq:zero}.
\end{restatable}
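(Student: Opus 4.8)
The plan is to run a Robbins–Siegmund supermartingale argument on the energy $\norm{\current-\sol}^2$ for a fixed solution $\sol\in\sols$, driven entirely by \cref{lem:dsseg-descent-stoch}. Reading the bound \eqref{eq:dsseg-descent-stoch}, its right-hand side already has the Robbins–Siegmund shape $(1+\beta_{\run})\norm{\current-\sol}^2-\xi_{\run}+\zeta_{\run}$: the multiplicative perturbation is $\beta_{\run}=\current[\Cons]\varcontrol^2$, the additive term is $\zeta_{\run}=\current[\Cons]\noisevar$, and the two subtracted quantities form the decrement
$\xi_{\run}=2\current[\stepalt]\exof{\product{\vecfield(\inter)}{\inter-\sol}\given\current[\filter]}+\current[\step]\current[\stepalt](1-\current[\step]^2\lips^2-8\current[\step]\current[\stepalt]\varcontrol^2)\norm{\vecfield(\current)}^2$.

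First I would check the three structural requirements. The first summand of $\xi_{\run}$ is nonnegative by \cref{asm:mono} applied at $\inter$, so its conditional expectation is nonnegative; the bracket in the second summand is uniformly positive because $\sup_{\run}\current[\step]<1/3\max(\smooth,\varcontrol)$ (with $\smooth$ the modulus of \cref{asm:Lipschitz}) together with $\current[\stepalt]\le\current[\step]$ keeps $\current[\step]^2\lips^2<1/9$ and $8\current[\step]\current[\stepalt]\varcontrol^2\le 8\current[\step]^2\varcontrol^2<8/9$, whence $1-\current[\step]^2\lips^2-8\current[\step]\current[\stepalt]\varcontrol^2\ge\delta$ for some fixed $\delta>0$; thus $\xi_{\run}\ge0$. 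Next, since $\current[\step]$ is bounded, each of the four terms composing $\current[\Cons]$ is dominated by $\current[\step]^2\current[\stepalt]$ or $\current[\stepalt]^2$, so $\sum_{\run}\current[\Cons]<\infty$ by \cref{asm:steps}, giving $\sum_{\run}\beta_{\run}<\infty$ and $\sum_{\run}\zeta_{\run}<\infty$. Robbins–Siegmund then yields, almost surely: (i) $\norm{\current-\sol}$ converges to a finite limit, and (ii) $\sum_{\run}\xi_{\run}<\infty$, which in particular delivers $\sum_{\run}\current[\step]\current[\stepalt]\norm{\vecfield(\current)}^2\le\delta^{-1}\sum_{\run}\xi_{\run}<\infty$.

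From here the argument is deterministic on a full-measure event. Combining $\sum_{\run}\current[\step]\current[\stepalt]\norm{\vecfield(\current)}^2<\infty$ with the divergence $\sum_{\run}\current[\step]\current[\stepalt]=\infty$ from \cref{asm:steps} forces $\liminf_{\run}\norm{\vecfield(\current)}=0$. Meanwhile, convergence of $\norm{\current-\sol}$ for the fixed $\sol$ shows $(\current)_{\run}$ is bounded, so along a subsequence realizing this $\liminf$ I can extract a cluster point $\hat\point$, and continuity of $\vecfield$ (\cref{asm:Lipschitz}) gives $\vecfield(\hat\point)=0$, i.e. $\hat\point\in\sols$. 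It remains to upgrade subsequential to full-sequence convergence, which I would do with a quasi-Fejér / Opial argument: since $\norm{\current-\sol}$ converges for \emph{every} $\sol\in\sols$ (established below) and every cluster point of $(\current)_{\run}$ lies in $\sols$, any two cluster points $a,b$ must coincide, by evaluating $\norm{\current-a}^2=\norm{\current-b}^2+2\product{\current-b}{b-a}+\norm{b-a}^2$ along subsequences tending to $a$ and to $b$ and using that both $\norm{\current-a}$ and $\norm{\current-b}$ converge.

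The main obstacle is the measure-theoretic bookkeeping behind ``$\norm{\current-\sol}$ converges for every $\sol\in\sols$'': Robbins–Siegmund supplies a full-measure event \emph{per} solution, while $\sols$ may be uncountable. I would fix a countable dense subset $D\subseteq\sols$, intersect the countably many convergence events over $D$ into one full-measure event, and then extend convergence from $D$ to all of $\sols$ via the $1$-Lipschitz dependence $\lvert\norm{\current-\sol}-\norm{\current-y}\rvert\le\norm{\sol-y}$, which bounds the oscillation $\limsup_{\run}\norm{\current-\sol}-\liminf_{\run}\norm{\current-\sol}$ by $2\norm{\sol-y}$ for all $y\in D$, hence by zero. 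This is the step where genuine care is needed; the supermartingale mechanics and the Opial conclusion are then routine.
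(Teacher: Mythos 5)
Your proposal follows the paper's own route almost step for step: the Robbins\textendash Siegmund argument applied to the bound of \cref{lem:dsseg-descent-stoch}, with the same identification of the multiplicative perturbation, additive noise term, and decrement; the same verification that the bracket $1-\current[\step]^2\lips^2-8\current[\step]\current[\stepalt]\varcontrol^2$ is uniformly positive under $\sup_\run\current[\step] < 1/3\max(\smooth,\varcontrol)$ and $\current[\stepalt]\le\current[\step]$; the same deduction of $\liminf_{\run\to\infty}\norm{\vecfield(\current)}=0$ from $\sum_\run\current[\step]\current[\stepalt]=\infty$; and the same countable-dense-subset device (the paper's Part 2) producing a single full-measure event on which $\norm{\current-\sol}$ converges for \emph{every} $\sol\in\sols$. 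Your oscillation bound via the $1$-Lipschitz map $\sol\mapsto\norm{\current-\sol}$ is, if anything, a slightly cleaner phrasing of the paper's triangle-inequality computation.

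The one place where your write-up is not correct as stated is the endgame. Your Opial argument rests on the premise that ``every cluster point of $(\current)_\run$ lies in $\sols$,'' but nothing you proved gives this: you only established $\liminf_{\run\to\infty}\norm{\vecfield(\current)}=0$, not $\lim_{\run\to\infty}\norm{\vecfield(\current)}=0$, so only cluster points extracted along the subsequence realizing that $\liminf$ are known to be zeros of $\vecfield$; a priori the sequence could have other cluster points about which you know nothing, and for those the two-cluster-point comparison cannot even get started. Fortunately the detour is unnecessary, and this is exactly where the paper's Part 3 is simpler: you already have \emph{one} cluster point $\hat\point\in\sols$ (obtained from boundedness, the $\liminf$ subsequence, and continuity of $\vecfield$), and on your full-measure event the full sequence $\norm{\current-\hat\point}$ converges; since a subsequence of it tends to $0$, its limit is $0$, i.e.\ $\current\to\hat\point$. (A posteriori this shows all cluster points coincide and lie in $\sols$, but that is a conclusion, not a usable premise.) With the last step replaced by this one-line Fej\'er argument, your proof is complete and coincides with the paper's.
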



As far as we are aware, this is the first result of this type for stochastic first-order methods:
almost sure convergence typically requires stronger hypotheses guaranteeing that $\product{\vecfield(\point)}{\point-\sol}$ is uniformly positive when $\point \notin \sols$ \cite{KS19,MLZF+19}.
In particular, \cref{thm:dsseg-general-as} implies the almost sure convergence of the algorithm for bilinear problems like \eqref{eq:planar} where \ac{EG} and standard gradient methods do not converge.

\para{Local convergence}

To extend \cref{thm:dsseg-general-as} to fully non-monotone settings, we will consider the following local
version of \cref{asm:Lipschitz,asm:noise,asm:mono} near a solution point $\sol$:

\smallbreak
\asmtag{\ref*{asm:Lipschitz}$'$}
\begin{restatable}{assumption}{AsmLipsLoc}
\label{asm:Lipschitz-loc}
The field $\vecfield$ is $\lips$-Lipschitz continuous near $\sol$, \ie for all $\point,\pointalt$ near $\sol$,
\begin{equation}
\notag
\dnorm{\vecfield(\pointalt) - \vecfield(\point)}
	\leq \lips \norm{\pointalt - \point}.
\label{eq:Lipschitz-loc}
\end{equation}
\end{restatable}

\smallbreak
\asmtag{\ref*{asm:noise}$'$}
\begin{restatable}{assumption}{AsmNoiseLoc}
\label{asm:noise-loc}
Let $\sol\in\sols$ and $\nhd$ be a neighborhood of $\sol$. The noise term $\current[\noise]$ of \ac{SFO} satisfies
\begin{subequations}
\label{eq:noise-loc}
\begin{alignat}{2}
&a)\;\;
	\textit{Zero-mean:}
	&\quad
	&\exof{\current[\noise] \given \current[\filter]}\one_{\{\current\in\nhd\}}
	= 0.
	\hspace{17em}
	\retainlabel{eq:mean-loc}
	\\
&b)\;\;
	\textit{Moment control:}
	&\quad
	&\exof{\dnorm{\current[\noise]}^{\probmoment} \given
	\current[\filter]}\one_{\{\current\in\nhd\}} \leq 
	(\noisedev + \varcontrol \norm{\current-\sol})^{\probmoment}.
	\retainlabel{eq:variance-loc}
\end{alignat}
\end{subequations}
for some $\probmoment > 2$ and $\noisedev, \varcontrol \ge 0$.
\end{restatable}

\smallbreak
\asmtag{\ref*{asm:mono}$'$}
\begin{restatable}{assumption}{AsmMonoLoc}
\label{asm:mono-loc}
The operator $\vecfield$ satisfies $\product{\vecfield(\point)}{\point-\sol}\ge0$ for all $\point$ near $\sol$.
\end{restatable}
\addtocounter{assumption}{-3}

Notice that \eqref{eq:variance-loc} is slightly stronger than \eqref{eq:variance} in the sense that we now require to control the $\probmoment^{th}$ moment of the noise for some $\probmoment>2$.
Nonetheless, this condition as well as the unbiasedness assumption only need to be satisfied in a neighborhood of $\sol$.
Our next result shows that, with these modified assumptions, the \ac{DSEG} algorithm converges locally to solutions with high probability:
\smallbreak




\begin{restatable}{theorem}{ThmLocalAs}
\label{thm:dsseg-local-as}
Fix a tolerance level $\delta>0$ and suppose that \cref{asm:Lipschitz-loc,asm:noise-loc,asm:mono-loc} hold for some isolated solution $\sol$ of \eqref{eq:zero}.
Assume further that \eqref{eq:DSEG} is run with stepsize parameters of the form \eqref{eq:stepsizes} with small enough $\step$, $\stepalt$ and proper choice of $\exponent_{\step}, \exponent_{\stepalt}$ \textpar{\cf \cref{fig:stepsizes}}.
If the algorithm is not initialized too far from $\sol$, its iterates converge to $\sol$ with probability at least $1-\delta$.
\end{restatable}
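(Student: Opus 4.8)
The plan is to upgrade \cref{thm:dsseg-general-as} to a local, high-probability statement through a confinement (stopping-time) argument: I would show that with probability at least $1-\delta$ the iterates never leave a neighborhood of $\sol$ on which the local hypotheses are active, and then replay the global proof on that event. Fix $r>0$ small enough that the ball $\nhd_{r}=\{\point:\norm{\point-\sol}\le r\}$ lies inside the region where \cref{asm:Lipschitz-loc,asm:noise-loc,asm:mono-loc} hold and where $\vecfield$ is $\lips$-Lipschitz with $\vecfield(\sol)=0$, and introduce the stopping time $\tau=\inf\{\run:\current\notin\nhd_{r}\text{ or }\inter\notin\nhd\}$. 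On the event $\{\run<\tau\}$ both the base point $\current$ and the leading point $\inter$ lie in the region of validity, so the computation behind \cref{lem:dsseg-descent-stoch} goes through locally; discarding the cross term $\exof{\product{\vecfield(\inter)}{\inter-\sol}\given\current[\filter]}\ge0$ via \cref{asm:mono-loc} and keeping $\current[\step]$ small enough to render the gradient coefficient nonnegative, I obtain, with $D_{\run}=\norm{\current-\sol}^{2}$ (so that $D_{\run+1}=\norm{\update-\sol}^{2}$),
\[
\exof{D_{\run+1}\given\current[\filter]}\le(1+a_{\run})\,D_{\run}+b_{\run}
\quad\text{on }\{\run<\tau\},
\]
where $a_{\run}=\current[\Cons]\varcontrol^{2}$ and $b_{\run}=\current[\Cons]\noisevar$ are deterministic and summable by \cref{asm:steps}.

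Before confining $\current$, I would control the exploration step so that $\{\inter\in\nhd\}$ fails only rarely. Since $\inter-\current=-\current[\step]\current[\svecfield]$ with $\norm{\vecfield(\current)}\le\lips r$ on $\nhd_{r}$ and the noise has a finite $\probmoment$-th moment, a Markov estimate gives $\Pr[\inter\notin\nhd\mid\current\in\nhd_{r},\current[\filter]]=\bigoh(\current[\step]^{\probmoment})$, which is summable precisely because $\probmoment\exponent_{\step}>1$ in the admissible region of \cref{fig:stepsizes}. This is exactly where the strengthening from a second moment (\cref{asm:noise}) to a $\probmoment$-th moment with $\probmoment>2$ (\cref{asm:noise-loc}) is used: it converts the aggressive, near-constant exploration stepsize --- which is what makes \eqref{eq:DSEG} converge --- into a summable excursion probability, at the price of taking the scale factor $\step$ small.

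The stopped energy $D_{\run\wedge\tau}$ is then a nonnegative almost-supermartingale, so Ville's (Doob's) maximal inequality bounds $\Pr[\sup_{\run}D_{\run\wedge\tau}\ge r^{2}]$ by $\big(\textstyle\prod_{\run}(1+a_{\run})\big)\big(D_{0}+\sum_{\run}b_{\run}\big)/r^{2}$. Combining this with the one-step excursion bound of the previous paragraph (summed over $\run$) controls $\Pr[\tau<\infty]$, and I would force it below $\delta$ by choosing (i) the initialization so that $\norm{X_{0}-\sol}$ is small relative to $r$, and (ii) the scale factors $\gamma,\eta$ small enough that $\sum_{\run}b_{\run}$ and $\sum_{\run}\current[\step]^{\probmoment}$ are small. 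This quantitative confinement is the main obstacle: in the global \cref{thm:dsseg-general-as} only almost-sure, constant-free statements are needed, whereas here two competing effects --- aggressive exploration, which aids convergence but enlarges the excursions of $\inter$, and the requirement that the total exit probability stay under a prescribed $\delta$ --- must be balanced at once, and it is this balance that pins down both the admissible exponent region and the $\probmoment>2$ moment assumption.

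Finally, on the confinement event $\{\tau=\infty\}$, which now has probability at least $1-\delta$, every iterate and every leading point remain in $\nhd_{r}\subseteq\nhd$, so \cref{asm:Lipschitz-loc,asm:noise-loc,asm:mono-loc} hold along the entire trajectory and the proof of \cref{thm:dsseg-general-as} applies verbatim: a Robbins--Siegmund argument yields that $D_{\run}$ converges and that $\sum_{\run}\current[\step]\current[\stepalt]\norm{\vecfield(\current)}^{2}<\infty$, whence $\liminf_{\run}\norm{\vecfield(\current)}=0$; since $\sol$ is an isolated zero of the continuous field $\vecfield$, the already-convergent sequence $D_{\run}$ can only tend to $0$, i.e.\ $\current\to\sol$. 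Intersecting with the confinement event gives $\current\to\sol$ with probability at least $1-\delta$, as claimed.
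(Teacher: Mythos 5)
Your overall architecture --- high-probability confinement of the iterates, a local replay of the Robbins--Siegmund argument on the confinement event, and the use of isolation of $\sol$ to upgrade $\liminf_{\run}\norm{\vecfield(\current)}=0$ into convergence --- is the same as the paper's, and your instinct that the $\probmoment$-th moment turns the aggressive exploration step into a summable excursion probability (via $\probmoment\exponent_{\step}>1$) matches the paper's stepsize condition $\sum_{\run}\current[\step]^{\probmoment}<\infty$. The genuine gap is at the step you treat as automatic: the claim that \cref{lem:dsseg-descent-stoch} ``goes through locally'' on $\{\run<\tau\}$, so that the stopped energy $D_{\run\wedge\tau}$ is an almost-supermartingale with respect to $\current[\filter]$. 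Your stopping time must include the condition $\inter\in\nhd$ (without it, none of the local assumptions can be invoked at the leading point: the sign of $\product{\vecfield(\inter)}{\inter-\sol}$, the Lipschitz bound on $\vecfield(\inter)-\vecfield(\interdet)$, and the zero-mean/moment control of $\inter[\noise]$ all require $\inter\in\nhd$, and $\vecfield$ is completely unconstrained outside $\nhd$). But then $\{\tau>\run\}$ is \emph{not} $\current[\filter]$-measurable: it contains the event $\{\inter\in\nhd\}$, which is a statement about the exploration noise $\current[\noise]$. Multiplying the one-step expansion by $\one_{\{\tau>\run\}}$ therefore couples the indicator to the noise, and the term whose vanishing is essential to the descent lemma no longer vanishes: on $\{\current\in\nhd\}$, by \eqref{eq:mean-loc},
\begin{equation}
\exof{\current[\noise]\,\one_{\{\inter\in\nhd\}} \given \current[\filter]}
	= -\,\exof{\current[\noise]\,\one_{\{\inter\notin\nhd\}} \given \current[\filter]}
	\neq 0
	\quad\text{in general,}
	\notag
\end{equation}
so $\exof{\product{\vecfield(\interdet)}{\current[\noise]}\,\one_{\{\tau>\run\}} \given \current[\filter]}$ is a genuine bias term. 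This is exactly the difficulty the paper flags (``after conditioning, the unbiasedness of the noise is not guaranteed''), and it invalidates, as stated, both your maximal-inequality bound on $\probof{\tau<\infty}$ (which needs the supermartingale property) and the ``verbatim'' replay of \cref{thm:dsseg-general-as} on $\{\tau=\infty\}$, since Robbins--Siegmund requires an unconditional recursion for adapted processes, not one restricted to a non-adapted event.

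The gap is fixable, but fixing it is the actual content of the paper's proof. For the confinement part, the paper proves a bespoke lemma (\cref{lem:recursive-stoch}) with time indexed by half-integers: each noise increment --- $-2\current[\step]\current[\stepalt]\product{\vecfield(\interdet)}{\current[\noise]}$ for the exploration half-step, $-2\current[\stepalt]\product{\inter[\noise]}{\current-\sol}$ for the update half-step --- is paired with the indicator of the event \emph{one half-step earlier}, which is measurable for the $\sigma$-algebra on which one conditions, so the zero-mean property survives; the possible excursion of $\inter$ is encoded as the separate condition $\current[\step]^{\probmoment}\norm{\current[\noise]}^{\probmoment}\le\Cons/4$ and absorbed into the threshold structure of that lemma. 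For the convergence part, the paper keeps a non-measurable indicator but \emph{quantifies the bias}: writing the offending conditional expectation over the difference of two successive events and applying Chebyshev plus Cauchy--Schwarz yields a bound of order $\noisevar\current[\step]/((1-\consalt)\nhdradius)$, which injects an extra $\bigoh(\current[\step]^{2}\current[\stepalt])$ --- hence summable --- term into the recursion before Robbins--Siegmund is applied. Your proposal needs one of these two mechanisms (or an equivalent bias-control argument) to become a proof; as written, the central measurability obstruction of the local setting is assumed away.
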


The first step towards proving \cref{thm:dsseg-local-as} is to show that the generated iterates stay close to $\sol$ with arbitrarily high probability.
To achieve this, one needs to control the total noise accumulating from each noisy step, a task which is made difficult by the fact that the norm of the \ac{SFO} feedback can only be upper bounded recursively and thus depends on previous iterates.
In the supplement, we dedicate a lemma to the study of such recursive stochastic processes, and we build our analysis on 
this lemma.
\subsection{Convergence rates}
\label{subsec:rate}

\para{Global rate}

To study the algorithm's convergence rate, we will require the following error bound condition:

\begin{assumption}
\label{asm:err-bound}
For some $\errbcst>0$ and all $\point\in\vecspace$, we have 
\begin{equation}
\label{eq:err-bound}
\dnorm{\vecfield(\point)}
	\geq \errbcst \dist(\point, \sols).
	\tag{EB}
\end{equation}
\end{assumption}

This kind of error bound is standard in the literature on \aclp{VI} for deriving last iterate convergence rates
\citep[see \eg][]{luo1993error,Tse95,FP03,Sol03,Mal19}.
In particular,  \cref{asm:err-bound} is satisfied by
\vspace*{-\smallskipamount}
\begin{enumerate}
[\itshape a\upshape)]
\setlength{\itemsep}{0pt}
	\item \emph{Strongly monotone operators:}
	here, $\tau$ is the strong monotonicity modulus.
	\item \emph{Affine operators:}
	for $\vecfield(\point)=\mat\point+\vvec$ where $\mat$ is a matrix of size $\vdim\times\vdim$ and $\vvec$ is a $\vdim$-dimensional vector, $\tau$ is the minimum non-zero singular value of $\mat$.
\end{enumerate}
\vspace*{-\smallskipamount}
In this sense, \cref{asm:err-bound} provides a unified umbrella for two types of problems that are typically considered to be poles apart. 
Our first result in this context is as follows:
\smallbreak

\begin{theorem}
\label{thm:dsseg-general-rate}
Suppose that \cref{asm:Lipschitz,asm:noise,asm:mono,asm:err-bound} hold and assume 
that $\current[\step]\le\cons/\lips$ with $\cons<1$.
Then:
\begin{enumerate}
\item\label{thm:dsseg-general-rate-a}
If \eqref{eq:DSEG} is run with $\current[\step] \equiv \step$, $\current[\stepalt] \equiv \stepalt$, we have:
\begin{equation}
\exof{\dist(\current, \sols)^2}
	\leq (1-\minuscst)^{\run-1}\dist(\state_\start, \sols)^2
		+ \frac{\pluscst}{\minuscst} \notag
\end{equation}
with constants $\pluscst=(2\step^2\stepalt\lips+\step^3\stepalt\lips^2+\stepalt^2)\noisevar$ and $\minuscst=\step\stepalt\errbcst^2(1-\cons^2)$.%
\footnote{For better readability, these constants are stated for the case $\varcontrol=0$. 
On the other hand, if $\noisedev=0$ (and $\varcontrol\geq 0$), a geometric convergence can be proved. The same arguments apply to \cref{thm:dsseg-affine}.}
\item\label{thm:dsseg-general-rate-b}
If \eqref{eq:DSEG} is run with 
$\current[\step] = \step/(\run+\stepoffset)^{1-\exponentalt}$ and $\current[\stepalt] = \stepalt/(\run+\stepoffset)^{\exponentalt}$ for some $\exponentalt\in(1/2,1)$, we have:
\begin{equation}
\exof{\dist(\current, \sols)^{2}}
	\leq
	\frac{\pluscst}{\minuscst-\exponent} \frac{1}{\run^\exponent}
	+ \smalloh\left(\frac{1}{\run^\exponent}\right) \notag
\end{equation}
where $\exponent = \min(1-\exponentalt, 2\exponentalt-1)$ and we further assume that $\step\stepalt\errbcst^2(1-\cons^2) > \exponent$.
In particular, the optimal rate is attained when $\exponentalt=2/3$, which gives $\ex[\dist(\current, \sols)^{2}]=\bigoh(1/\run^{1/3})$.
\end{enumerate}
\end{theorem}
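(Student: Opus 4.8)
The plan is to turn the one-step descent estimate of \cref{lem:dsseg-descent-stoch} into a contractive recursion for $a_\run := \exof{\dist(\current,\sols)^2}$ and then read off each regime from the stepsize schedule. Following the footnote I treat the case $\varcontrol=0$; for $\varcontrol>0$ only the constants change, through the multiplicative factor $(1+\current[\Cons]\varcontrol^2)$ premultiplying $\norm{\current-\sol}^2$. First I fix the stage $\run$ and choose $\sol=\Pi_{\sols}(\current)$, the closest solution to $\current$. Since $\sols$ is closed and $\current$ is $\current[\filter]$-measurable, this selection is itself $\current[\filter]$-measurable, so the conditional bound of \cref{lem:dsseg-descent-stoch} may be applied with it; then $\norm{\current-\sol}^2=\dist(\current,\sols)^2$, while $\dist(\update,\sols)^2\le\norm{\update-\sol}^2$ because $\sol\in\sols$. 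Under \cref{asm:mono} the cross term $\product{\vecfield(\inter)}{\inter-\sol}$ is non-negative and hence may be discarded from the right-hand side, and \cref{asm:err-bound} gives $\norm{\vecfield(\current)}^2\ge\errbcst^2\dist(\current,\sols)^2$. Substituting these into \cref{lem:dsseg-descent-stoch} and using $\current[\step]^2\lips^2\le\cons^2<1$ produces the master recursion
\[
\exof{\dist(\update,\sols)^2 \given \current[\filter]} \le \bigl(1-\current[\step]\current[\stepalt]\errbcst^2(1-\cons^2)\bigr)\dist(\current,\sols)^2 + \current[\Cons]\noisevar,
\]
whose additive term collects the noise contributions into the constant $\pluscst$. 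Taking total expectation turns this into a deterministic scalar recursion for $a_\run$.

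For part (a), constant stepsizes make the contraction factor $1-\minuscst$, with $\minuscst=\step\stepalt\errbcst^2(1-\cons^2)$, and the additive term $\pluscst$ both constant. Unrolling $a_\run\le(1-\minuscst)a_{\run-1}+\pluscst$ over $\run-1$ stages and summing the geometric series $\sum_{j\ge0}(1-\minuscst)^j=1/\minuscst$ yields the stated bound $(1-\minuscst)^{\run-1}\dist(\state_\start,\sols)^2+\pluscst/\minuscst$, which requires only $\minuscst\in(0,1)$, guaranteed by taking the stepsizes small enough.

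For part (b), the schedule gives $\current[\step]\current[\stepalt]=\step\stepalt/(\run+\stepoffset)$, so the per-stage contraction coefficient is $\minuscst_\run=c/(\run+\stepoffset)$ with $c=\step\stepalt\errbcst^2(1-\cons^2)$. The additive term $\current[\Cons]\noisevar$ has three pieces whose orders in $(\run+\stepoffset)$ are $2-\exponentalt$ (from $\current[\step]^2\current[\stepalt]$), $3-2\exponentalt$ (from $\current[\step]^3\current[\stepalt]$), and $2\exponentalt$ (from $\current[\stepalt]^2$); the middle one always decays faster, so the slowest decay is $1+\exponent=\min(2-\exponentalt,\,2\exponentalt)$, i.e. $\exponent=\min(1-\exponentalt,\,2\exponentalt-1)$. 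The resulting recursion $a_{\run+1}\le(1-c/(\run+\stepoffset))a_\run+d/(\run+\stepoffset)^{1+\exponent}$ is of the classical Chung type: when the contraction coefficient dominates the decay exponent, $c>\exponent$ (which is exactly the hypothesis $\step\stepalt\errbcst^2(1-\cons^2)>\exponent$), it yields $a_\run\le\frac{d}{c-\exponent}\,\run^{-\exponent}+\smalloh(\run^{-\exponent})$, matching the claimed $\frac{\pluscst}{\minuscst-\exponent}\run^{-\exponent}$. Finally, maximizing $\exponent(\exponentalt)=\min(1-\exponentalt,2\exponentalt-1)$ over $\exponentalt\in(1/2,1)$ balances the two competing terms at $\exponentalt=2/3$, where both equal $1/3$, giving the optimal $\bigoh(1/\run^{1/3})$ rate.

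Once \cref{lem:dsseg-descent-stoch} is in hand the argument is largely bookkeeping, and I expect the only delicate points to be two. The first is the measurable selection $\sol=\Pi_{\sols}(\current)$ that lets a bound stated for a fixed $\sol$ be read as a bound on the distance to the whole solution set; this must be justified rather than taken for granted. The second, and the main obstacle, is formulating and proving the auxiliary recursion lemma cleanly: one must identify the dominant decay order of $\current[\Cons]\noisevar$ correctly, thread the offset $\stepoffset$ and the precise constants through the estimate, and verify that the threshold $c>\exponent$ is exactly what makes the leading coefficient $\frac{\pluscst}{\minuscst-\exponent}$ finite and correct. Everything else is a direct specialization of the descent inequality.
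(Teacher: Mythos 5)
Your proposal is correct and, at its core, is the same proof as the paper's: specialize the descent estimate using \cref{asm:mono} (drop the cross term) and \cref{asm:err-bound}, pass to a scalar contraction recursion for $\exof{\dist(\current,\sols)^2}$, and finish with geometric unrolling for part (a) and a Chung-type lemma for part (b) \textendash\ exactly the paper's \cref{lem:cst-noise-descent} and \cref{lem:chung1954}; your bookkeeping of the decay exponents, the threshold $\minuscst>\exponent$, and the balancing at $\exponentalt=2/3$ all agree with the paper. Two deviations deserve comment. First, for the passage from a fixed $\sol$ to $\dist(\cdot,\sols)$ you use a measurable nearest-point selection $\sol=\Pi_{\sols}(\current)$; this can be made rigorous (every step of \cref{lem:dsseg-descent-stoch} is conditional on $\current[\filter]$ and so tolerates a $\current[\filter]$-measurable target), but note that under \cref{asm:mono} alone $\sols$ need not be convex, so the nearest point need not be unique and you genuinely need a measurable-selection argument. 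The paper avoids all of this with a one-line trick: its descent inequality holds for every \emph{deterministic} $\sol\in\sols$, and since $\exof{\min_{\sol\in\sols}\norm{\update-\sol}^2 \given \current[\filter]}\le\min_{\sol\in\sols}\exof{\norm{\update-\sol}^2 \given \current[\filter]}$, taking the minimum over $\sol$ of both sides directly yields the recursion in $\dist(\cdot,\sols)^2$. Second, by invoking \cref{lem:dsseg-descent-stoch} as a black box you inherit its noise constant $\current[\Cons]\noisevar$, which at $\varcontrol=0$ equals $(4\step^2\stepalt\lips+2\step^3\stepalt\lips^2+4\stepalt^2)\noisevar$; this exceeds the theorem's $\pluscst=(2\step^2\stepalt\lips+\step^3\stepalt\lips^2+\stepalt^2)\noisevar$ because the lemma's proof uses the relaxation $(\noisedev+\varcontrol\norm{\current-\sol})^2\le2\noisevar+2\varcontrol^2\norm{\current-\sol}^2$, which is lossy precisely when $\varcontrol=0$. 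The paper instead restarts from the intermediate inequality inside the lemma's proof (where the two noise moments $\exof{\norm{\current[\noise]}^2 \given \current[\filter]}$ and $\exof{\norm{\inter[\noise]}^2 \given \current[\filter]}$ appear explicitly) and bounds each by $\noisevar$, which is what produces the stated constants. Your argument therefore delivers the same rates and the same stepsize threshold, but with $\pluscst$ inflated by a factor of at most $4$; to obtain the constants exactly as stated you must open up the descent lemma rather than cite it.
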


The first part of \cref{thm:dsseg-general-rate} shows that, if \eqref{eq:DSEG} is run with constant stepsizes, the initial condition is forgotten exponentially fast and the iterates converge to a neighborhood of $\sols$ (though, in line with previous results, convergence cannot be achieved in this case). 
To make this neighborhood small, we need to decrease both $\step$ and $\stepalt/\step$;
this would be impossible for vanilla \eqref{eq:EG} for which $\stepalt/\step=1$.

The second part of \cref{thm:dsseg-general-rate} provides an $\bigoh(1/\run^{1/3})$ last-iterate convergence rate.
In \cref{subsec:affine}, we further improve this rate
to $\bigoh(1/\run)$ for affine operators by exploiting their particular structure. 

\para{Local rate}

To study the algorithm's local rate of convergence, we will focus on solutions of \eqref{eq:zero} that satisfy the following Jacobian regularity condition:
\asmtag{\ref*{asm:err-bound}$'$}
\begin{restatable}{assumption}{AsmErrLoc}
\label{asm:err-bound-loc}
$\vecfield$ is differentiable at $\sol$ and its Jacobian matrix $\Jacf{\vecfield}{\sol}$ is invertible.
\end{restatable}

The link between \cref{asm:err-bound,asm:err-bound-loc} is provided by the following proposition:

\begin{proposition}
\label{prop:local-err-bound}
If a solution $\sol$ satisfies \cref{asm:err-bound-loc}, it satisfies \eqref{eq:err-bound} in a neighborhood of $\sol$.
\end{proposition}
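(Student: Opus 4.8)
The plan is to exploit the first-order expansion of $\vecfield$ at the solution $\sol$ together with the invertibility of its Jacobian. Since $\sol\in\sols$, we have $\vecfield(\sol)=0$, so differentiability of $\vecfield$ at $\sol$ (\cref{asm:err-bound-loc}) gives the expansion $\vecfield(\point)=\Jacf{\vecfield}{\sol}(\point-\sol)+r(\point)$ with a remainder satisfying $\dnorm{r(\point)}=o(\norm{\point-\sol})$ as $\point\to\sol$. The key structural input is again \cref{asm:err-bound-loc}: because $\Jacf{\vecfield}{\sol}$ is invertible, its smallest singular value $\sigma_{\min}>0$, and hence $\dnorm{\Jacf{\vecfield}{\sol}v}\ge\sigma_{\min}\norm{v}$ for every $v$ (equivalently, $\sigma_{\min}=1/\dnorm{\Jacf{\vecfield}{\sol}^{-1}}$).

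First I would combine these two facts through the reverse triangle inequality, $\dnorm{\vecfield(\point)}\ge\dnorm{\Jacf{\vecfield}{\sol}(\point-\sol)}-\dnorm{r(\point)}\ge\sigma_{\min}\norm{\point-\sol}-o(\norm{\point-\sol})$. Using the definition of the little-$o$ remainder, I would then fix a neighborhood $\nhd$ of $\sol$ small enough that $\dnorm{r(\point)}\le\tfrac12\sigma_{\min}\norm{\point-\sol}$ for all $\point\in\nhd$; on this neighborhood the chain above yields the clean pointwise bound $\dnorm{\vecfield(\point)}\ge\tfrac12\sigma_{\min}\norm{\point-\sol}$.

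It remains to pass from the distance to the single point $\sol$ to the distance to the whole solution set $\sols$. This is the only step that requires a moment's thought, but it goes in the favorable direction: since $\sol\in\sols$, we always have $\dist(\point,\sols)\le\norm{\point-\sol}$, so the pointwise bound immediately upgrades to $\dnorm{\vecfield(\point)}\ge\tfrac12\sigma_{\min}\,\dist(\point,\sols)$ for every $\point\in\nhd$. This is precisely \eqref{eq:err-bound} on $\nhd$ with $\errbcst=\tfrac12\sigma_{\min}$. I do not expect a genuine obstacle here: the whole argument is a local linearization, and the fact that we seek a \emph{lower} bound on $\dnorm{\vecfield(\point)}$ is exactly what makes the set-distance version come for free (an upper bound would instead have forced me to invoke that invertibility of the Jacobian makes $\sol$ an isolated zero, via the inverse function theorem, in order to rule out nearby competing solutions).
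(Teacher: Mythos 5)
Your proof is correct and follows essentially the same route as the paper's: a first-order Taylor expansion of $\vecfield$ at $\sol$ combined with the smallest-singular-value bound $\dnorm{\Jacf{\vecfield}{\sol}v}\ge\sigma_{\min}\norm{v}$, then absorbing the $\smalloh(\norm{\point-\sol})$ remainder on a small enough neighborhood. The only difference is the final passage from $\norm{\point-\sol}$ to $\dist(\point,\sols)$: the paper invokes the equality $\dist(\point,\sols)=\norm{\point-\sol}$ on a sufficiently small neighborhood (which implicitly rests on $\sol$ being isolated), whereas you use the one-sided inequality $\dist(\point,\sols)\le\norm{\point-\sol}$, which holds trivially because $\sol\in\sols$ and goes in exactly the direction needed for a lower bound \textendash\ a slightly more economical finish, as you correctly observe.
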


The proof of \cref{prop:local-err-bound} follows by performing a Taylor expansion of $\vecfield$ and invoking the minimax characterization of the singular values of a matrix;
we give the details in the supplement.
For our purposes, what is more important is that \eqref{eq:err-bound} has now been reduced to a \emph{pointwise} condition;
under this much lighter requirement, we have:
\smallbreak

\begin{restatable}{theorem}{ThmLocalRate}
\label{thm:dsseg-local}
Fix a tolerance level $\delta>0$ and suppose that \cref{asm:Lipschitz-loc,asm:noise-loc,asm:mono-loc} and \ref{asm:err-bound-loc} hold for some isolated solution $\sol$ of \eqref{eq:zero} with $\probmoment>3$.
Assume further $\sol$ satisfies \cref{asm:err-bound-loc} and \eqref{eq:DSEG} is run with stepsize parameters of the form
$\current[\step]=\step/(\run+\stepoffset)^{1/3}$ and
$\current[\stepalt]=\stepalt/(\run+\stepoffset)^{2/3}$
with large enough $\stepoffset,\stepalt > 0$.
Then, there exist neighborhoods $\nhd$, $\nhdalt$ of $\sol$ and an event $\eventI_{\nhd}$
such that:
\begin{enumerate}
[\itshape a\upshape)]
\setlength{\itemsep}{0pt}
\item \label{dsseg-local-a}
$\probof{\eventI_{\nhd} \given \state_{\start} \in \nhd} \ge 1-\smallproba$.
\item \label{dsseg-local-b}
$\probof{\current \in \nhdalt \; \text{for all $\run$} \given \eventI_{\nhd}} = 1$.
\item \label{dsseg-local-c}
$\exof{\norm{\state_{\nRunsNew} - \sol}^{2} \given \eventI_{\nhd}} = \bigoh\parens*{1/\run^{1/3}}$
\end{enumerate}
In words, if \eqref{eq:DSEG} is not initialized too far from $\sol$, the iterates $\current$ remain close to $\sol$ with probability at least $1-\delta$ and, conditioned on this event, $\current$ converges to $\sol$ at a rate $\bigoh(1/\run^{1/3})$ in mean square error.
\end{restatable}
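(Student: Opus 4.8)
The plan is to reduce the local statement to the global rate of the second part of \cref{thm:dsseg-general-rate} by confining the trajectory to a neighborhood on which all the localized hypotheses behave exactly like their global counterparts. First I would invoke \cref{prop:local-err-bound} to upgrade \cref{asm:err-bound-loc} into a genuine error bound \eqref{eq:err-bound} valid on some neighborhood of $\sol$; shrinking it if necessary, I fix a single $\nhdalt$ on which \cref{asm:Lipschitz-loc,asm:noise-loc,asm:mono-loc} and the pointwise error bound all hold simultaneously. The point of this step is that, \emph{provided} the iterates never leave $\nhdalt$, the descent inequality of \cref{lem:dsseg-descent-stoch} applies verbatim along the whole trajectory, and the prescribed exponents $\exponent_{\step}=1/3$, $\exponent_{\stepalt}=2/3$ are precisely the instance $\exponentalt=2/3$ of \cref{thm:dsseg-general-rate}, for which $\exponent=\min(1-\exponentalt,2\exponentalt-1)=1/3$ and $\step\stepalt\errbcst^2(1-\cons^2)>\exponent$ is exactly the role of ``$\stepalt$ large enough''. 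Note also that for these exponents $\sum_{\run}\current[\step]\current[\stepalt]=\infty$ while $\sum_{\run}\current[\step]^2\current[\stepalt]<\infty$ and $\sum_{\run}\current[\stepalt]^2<\infty$, so \cref{asm:steps} is in force.

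The crux is therefore the confinement claimed in parts (a)--(b), which is the same mechanism already used for \cref{thm:dsseg-local-as}. I would introduce the exit time $\tau=\inf\{\run:\current\notin\nhdalt\}$, work with the stopped process $\hat{\state}_{\run}:=\state_{\run\wedge\tau}$, and set $\eventI_{\nhd}=\{\tau=\infty\}$, so that $\hat{\state}_{\run}=\current$ on $\eventI_{\nhd}$ and part (b) holds by construction. On $\{\run<\tau\}$ all local assumptions are active, so \cref{lem:dsseg-descent-stoch} together with the local error bound makes $\norm{\hat{\state}_{\run}-\sol}^2$ an almost-supermartingale whose upward perturbations, controlled by $\current[\Cons]$ and $\noisevar$, are summable under the chosen exponents. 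The recursive-process lemma announced after \cref{thm:dsseg-local-as} then bounds the probability that the accumulated noise ever drives the stopped iterate out of $\nhdalt$; taking $\stepoffset$ large (so the initial stepsizes are small) and initializing inside a smaller $\nhd\subset\nhdalt$ forces this exit probability below $\delta$, which gives $\probof{\eventI_{\nhd}\given\state_{\start}\in\nhd}\ge 1-\delta$.

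For the rate in part (c) I would run the computation of the second part of \cref{thm:dsseg-general-rate} on the stopped process $\hat{\state}_{\run}$, which satisfies the global hypotheses \emph{unconditionally} (it cannot leave $\nhdalt$), obtaining the recursion that yields $\exof{\norm{\hat{\state}_{\run}-\sol}^2}=\bigoh(1/\run^{1/3})$. Since $\hat{\state}_{\run}=\current$ on $\eventI_{\nhd}$ and $\probof{\eventI_{\nhd}}\ge 1-\delta>0$, dividing by $\probof{\eventI_{\nhd}}$ transfers the unconditional bound to $\exof{\norm{\state_{\nRunsNew}-\sol}^2\given\eventI_{\nhd}}=\bigoh(1/\run^{1/3})$, which is the assertion. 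Passing through the stopped process (rather than conditioning directly) is essential, because conditioning on the future event $\eventI_{\nhd}$ destroys the supermartingale structure that the recursion relies on.

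The main obstacle is the confinement step combined with the transfer to an $L^2$ rate: the descent inequality may be used only up to $\tau$, and the martingale part of the recursion must be controlled with merely a $\probmoment$-th moment bound on the noise. This is exactly where the strengthened requirement $\probmoment>3$ enters. For the almost-sure confinement of \cref{thm:dsseg-local-as} the borderline $\probmoment>2$ suffices, but to obtain a genuine \emph{rate} in mean square one needs a maximal (Doob/Burkholder-type) inequality applied to the martingale increments strong enough that the excursion probability is both small and summable against the weights $\current[\stepalt]^2$, and that the conditional second moment given $\eventI_{\nhd}$ stays $\bigoh$ of its unconditional value; the extra moment $\probmoment>3$ is what supplies the uniform integrability needed to make this exchange of the stopping time with the expectation rigorous. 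A secondary subtlety, handled by the stopped-process device above, is keeping the martingale machinery intact despite conditioning on $\eventI_{\nhd}$.
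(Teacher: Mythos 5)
Your overall architecture \textendash\ confinement via a recursive-process lemma, a rate recursion valid on the good event, and a final division by $\probof{\eventI_{\nhd}} \ge 1-\delta$ \textendash\ matches the paper's, and your treatment of parts \ref{dsseg-local-a})--\ref{dsseg-local-b}) is essentially the stability result (\cref{thm:dsseg-local-stability}) that the paper invokes. The genuine gap is in part \ref{dsseg-local-c}), at the sentence claiming that the stopped process ``satisfies the global hypotheses \emph{unconditionally}''. Your stopping time $\tau$ is the exit time of the \emph{base} iterates from $\nhdalt$, but a single step of \eqref{eq:DSEG} started at $\current\in\nhdalt$ passes through the exploration state $\inter = \current - \current[\step]\current[\svecfield]$, which is driven by possibly unbounded noise and can land outside every neighborhood on which \cref{asm:Lipschitz-loc,asm:noise-loc,asm:mono-loc} hold. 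Hence on $\{\tau>\run\}$ the descent inequality of \cref{lem:dsseg-descent-stoch} is simply not available, and the stopped process does \emph{not} obey the recursion of \cref{thm:dsseg-general-rate}. If you repair this by also stopping when the leading state exits $\ballr{\sol}{\nhdradius}$, then the ``no stop at stage $\run$'' indicator depends on $\current[\noise]$ and is no longer $\current[\filter]$-measurable, so the cross term $\exoft[\product{\vecfield(\interdet)}{\current[\noise]}\one_{\{\cdot\}}]$ no longer vanishes: this is exactly the loss of unbiasedness after conditioning that the paper flags (``this rate is not a localization of \cref{thm:dsseg-general-rate}''), and stopping does not remove it \textendash\ it only relocates it inside the iteration. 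The paper's proof confronts this bias explicitly: with the nested confinement events $\current[\eventI] \supset \inter[\eventI]$, it uses that $\exoft[\current[\noise]\one_{\current[\eventI]}]=0$ to get $\norm{\exoft[\current[\noise]\one_{\inter[\eventI]}]} = \norm{\exoft[\current[\noise]\one_{\setexclude{\current[\eventI]}{\inter[\eventI]}}]}$, bounds $\probof{\setexclude{\current[\eventI]}{\inter[\eventI]}\given\current[\filter]}$ by Chebyshev, and concludes by Cauchy\textendash Schwarz that the bias is $\bigoh(\current[\step])$, injecting an extra $\bigoh(\current[\step]^2\current[\stepalt])$ term into the recursion that is still compatible with \cref{lem:chung1954} and the $\bigoh(1/\run^{1/3})$ rate. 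Your proposal contains no estimate of this kind, and without it the rate recursion cannot be closed.

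A secondary inaccuracy concerns the role of $\probmoment>3$. It is not needed for Burkholder-type maximal inequalities or for uniform integrability when exchanging a stopping time with an expectation. It enters through the stability argument: the per-iteration escape of the exploration step is controlled through the quantity $\current[\step]^{\probmoment}\norm{\current[\noise]}^{\probmoment}$, whose conditional expectation is $\bigoh(\current[\step]^{\probmoment})$, and the confinement lemma requires $\sum_{\run}\current[\step]^{\probmoment} < \infty$; with the rate-optimal exponent $\current[\step] \propto \run^{-1/3}$ this summability is precisely the condition $\probmoment>3$.
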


Taken together, \cref{thm:dsseg-general-as,thm:dsseg-local} show that for all monotone stochastic problems with a non-degenerate critical point, employing the suggested stepsize policy yields an asymptotic $\bigoh(1/\run^{1/3})$ rate.
In more detail, the last point of \cref{thm:dsseg-local} shows that, with the same kind of stepsizes as in the second part of \cref{thm:dsseg-general-rate}, we can retrieve a $\bigoh(1/\run^{1/3})$ convergence rate provided that the iterates stay close to the solution. Note that this rate is not a localization of \cref{thm:dsseg-general-rate} because, after conditioning,
\emph{the unbiasedness of the noise is not guaranteed}.
To overcome this issue, our proof draws inspiration from \citet{HIMM19} but the use of double stepsizes requires a much more intricate analysis which is reflected in the stronger noise assumption.
\subsection{A case study of affine operators}
\label{subsec:affine}

We terminate our analysis with a dedicated treatment of affine operators
which are commonly studied as a first step to understand the training of \acp{GAN}
\citep{DISZ18,GBVV+19,MNG18,LS19,ZY20,ASMSG20}. 
The following result improves the $\bigoh(1/\run^{1/3})$ rate of \cref{thm:dsseg-general-rate} to $\bigoh(1/\run)$ for 
affine operators.
\smallbreak

\begin{theorem}\label{thm:dsseg-affine}
Let $\vecfield$ be an affine operator satisfying \cref{asm:mono}, and suppose that \cref{asm:noise} holds.
Take a constant exploration stepsize  $\current[\step] \equiv \step \le\cons/\lips$ with $\cons<1$ (here $\lips$ is the largest singular value of the associated matrix). Then, the iterates $\seqinf{\state}{\run}$ of \eqref{eq:DSEG} enjoy the following rates:
\begin{enumerate}
\item \label{thm:dsseg-affine-a}
If the update stepsize is constant $\current[\stepalt]\equiv\stepalt\le\step$, then:
\begin{align}
    \ex[\dist(\current, \sols)^2]
    \leq
	(1-\minuscst)^{\run-1}\dist(\state_\start, \sols)^2
	+ \frac{\pluscst}{\minuscst} \notag
\end{align}
with  $\pluscst=\stepalt^2(1+\cons^2)\noisevar$ and $\minuscst=\step\stepalt\errbcst^2(1-\cons^2)$.
\item \label{thm:dsseg-affine-b}
If the update stepsize is of the form $\current[\stepalt]= \stepalt/(\run+\stepoffset)$
for $\stepalt>1/(\errbcst^2\step(1-\cons^2))$ and $\stepoffset>\stepalt/\step$, then:
\begin{equation}
    \ex[\dist(\current, \sols)^{2}]
    \leq
	\frac{\pluscst}{\minuscst-1} \frac{1}{\run}
	+ \smalloh\left(\frac{1}{\run}\right). \notag
\end{equation}
\end{enumerate}
\end{theorem}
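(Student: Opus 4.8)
The plan is to reduce both parts to a single \emph{one-step contraction} and then iterate it. Fix the index $\run$ and let $\sol$ be the point of $\sols$ nearest to $\current$, so that $\dist(\current,\sols)=\norm{\current-\sol}$. I would prove
\begin{equation}
\exof{\dist(\update,\sols)^{2}\given\current[\filter]}
	\leq \parens*{1-\step\current[\stepalt]\errbcst^{2}(1-\cons^{2})}\dist(\current,\sols)^{2}
	+\current[\stepalt]^{2}(1+\cons^{2})\noisevar.
	\notag
\end{equation}
Before deriving this, I would record that for an affine $\vecfield(\point)=\mat\point+\vvec$ satisfying \cref{asm:mono}, the error bound \eqref{eq:err-bound} holds \emph{automatically} with $\errbcst$ the smallest nonzero singular value of $\mat$: writing $\vecfield(\point)=\mat(\point-\sol)$ for any $\sol\in\sols$ and splitting $\point-\sol$ along $\ker\mat$ and its orthogonal complement yields $\dnorm{\vecfield(\point)}\geq\errbcst\dist(\point,\sols)$. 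Hence \cref{asm:err-bound} is available for free and can be used freely below.

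The affine structure is what makes this bound sharp. Since $\vecfield$ is affine, the extrapolation is \emph{exactly} linear, $\vecfield(\inter)=\vecfield(\current)-\step\,\mat\,\current[\svecfield]$, so the second oracle query returns $\inter[\svecfield]=(I-\step\mat)\vecfield(\current)-\step\mat\current[\noise]+\inter[\noise]$, where $\current[\noise],\inter[\noise]$ are the oracle noises at $\current$ and $\inter$. Using $\vecfield(\current)=\mat(\current-\sol)$ this gives the closed linear recursion
\begin{equation}
\update-\sol
	=\left[I-\current[\stepalt](I-\step\mat)\mat\right](\current-\sol)
	+\current[\stepalt]\step\mat\current[\noise]-\current[\stepalt]\inter[\noise].
	\notag
\end{equation}
Taking $\exof{\cdot\given\current[\filter]}$ and peeling the two-level filtration — $\current[\noise]$ is zero-mean given $\current[\filter]$, and $\inter[\noise]$ is zero-mean given $\current[\filter]$ together with $\current[\noise]$ — every term linear in the noise vanishes and the two noise terms decouple, leaving only $\current[\stepalt]^{2}\parens*{\step^{2}\exof{\dnorm{\mat\current[\noise]}^{2}\given\current[\filter]}+\exof{\dnorm{\inter[\noise]}^{2}\given\current[\filter]}}$. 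Bounding $\dnorm{\mat\current[\noise]}\leq\lips\dnorm{\current[\noise]}$ with $\step\lips\leq\cons$ and invoking \eqref{eq:variance} controls this by $\current[\stepalt]^{2}(1+\cons^{2})\noisevar$. This purely $\bigoh(\current[\stepalt]^{2})$ noise — with none of the $\step^{2}\current[\stepalt]$, $\step^{3}\current[\stepalt]$ contributions of the generic \cref{lem:dsseg-descent-stoch} — is precisely what linearity buys, and it is essential: the coarser constant of \cref{lem:dsseg-descent-stoch} leaves an $\bigoh(\current[\stepalt])$ noise floor that would preclude the $\bigoh(1/\run)$ rate.

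For the deterministic part, $\left[I-\current[\stepalt](I-\step\mat)\mat\right](\current-\sol)=(\current-\sol)-\current[\stepalt]\,(I-\step\mat)\vecfield(\current)$, so its squared norm equals $\norm{\current-\sol}^{2}-\current[\stepalt]\bigl(2\product{(I-\step\mat)\vecfield(\current)}{\current-\sol}-\current[\stepalt]\dnorm{(I-\step\mat)\vecfield(\current)}^{2}\bigr)$. Writing $\product{(I-\step\mat)\vecfield(\current)}{\current-\sol}=\product{(I-\step\mat)\vecfield(\current)}{\inter-\sol}+\step\product{(I-\step\mat)\vecfield(\current)}{\vecfield(\current)}$, discarding the nonnegative variational-stability term, using monotonicity of $\mat$ together with $\step\lips\leq\cons$ to lower-bound the second inner product and upper-bound $\dnorm{(I-\step\mat)\vecfield(\current)}^{2}\leq(1+\cons^{2})\dnorm{\vecfield(\current)}^{2}$, and finally invoking $\current[\stepalt]\leq\step$, the bracket collapses to at least $\step(1-\cons^{2})\dnorm{\vecfield(\current)}^{2}$; the error bound $\dnorm{\vecfield(\current)}^{2}\geq\errbcst^{2}\dist(\current,\sols)^{2}$ then yields the contraction factor. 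With the one-step inequality in hand and $e_{\run}:=\exof{\dist(\current,\sols)^{2}}$, the first part follows by unrolling the geometric recursion $e_{\run+1}\leq(1-\minuscst)e_{\run}+\pluscst$ with $\minuscst=\step\stepalt\errbcst^{2}(1-\cons^{2})$ and $\pluscst=\stepalt^{2}(1+\cons^{2})\noisevar$. The second part follows from the standard recursion lemma for $e_{\run+1}\leq\bigl(1-\minuscst/(\run+\stepoffset)\bigr)e_{\run}+\pluscst/(\run+\stepoffset)^{2}$: the hypothesis $\stepalt>1/(\errbcst^{2}\step(1-\cons^{2}))$ makes $\minuscst>1$, exactly the threshold giving $e_{\run}\leq\tfrac{\pluscst}{\minuscst-1}\tfrac{1}{\run}+\smalloh(1/\run)$, while $\stepoffset>\stepalt/\step$ keeps $\current[\stepalt]\leq\step$ throughout so the one-step bound stays valid.

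The main obstacle is the deterministic contraction with the exact constant $\step\current[\stepalt]\errbcst^{2}(1-\cons^{2})$ for \emph{every} $\cons<1$: because $\mat$ need not be symmetric, its symmetric part contributes an antagonistic term that must be absorbed by the discarded variational-stability term, and one must retain the $\current[\stepalt]\dnorm{(I-\step\mat)\vecfield(\current)}^{2}$ correction tightly (rather than bounding its prefactor crudely) so that the estimate survives up to $\step\lips\leq\cons$ with $\cons$ arbitrarily close to $1$. The second delicate point is pure bookkeeping but easy to get wrong: since $\sols$ is an affine subspace, one must re-anchor to the nearest solution $\sol$ at each step and use $\dist(\update,\sols)\leq\norm{\update-\sol}$ to close the recursion.
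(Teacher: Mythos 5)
Your overall architecture is exactly the paper's: exploit linearity so that the second oracle call decomposes as $\inter[\svecfield]=\vecfield(\interdet)-\step\mat\current[\noise]+\inter[\noise]$, hence all noise enters the one-step inequality only at order $\current[\stepalt]^{2}$; discard the variational-stability term; invoke the (automatic) error bound; and close with \cref{lem:cst-noise-descent} and \cref{lem:chung1954}. The noise bookkeeping, the re-anchoring to the nearest solution, and the verification $\minuscst>1$ in Part 2 are all fine. However, the central deterministic estimate, \emph{as you describe it}, fails. Write $a=(I-\step\mat)\vecfield(\current)=\vecfield(\interdet)$ and $b=\vecfield(\current)$; after discarding the variational-stability term your bracket is $2\step\product{a}{b}-\current[\stepalt]\norm{a}^{2}$. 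Your stated bounds \textendash\ monotonicity plus $\step\lips\le\cons$ giving $\product{a}{b}\ge(1-\cons)\dnorm{b}^{2}$ and $\norm{a}^{2}\le(1+\cons^{2})\dnorm{b}^{2}$ \textendash\ together with $\current[\stepalt]\le\step$ yield only
\begin{equation}
2\step\product{a}{b}-\current[\stepalt]\norm{a}^{2}
\;\ge\;\step\bigl(2(1-\cons)-(1+\cons^{2})\bigr)\dnorm{b}^{2}
\;=\;\step\bigl(1-2\cons-\cons^{2}\bigr)\dnorm{b}^{2},
\notag
\end{equation}
which is strictly weaker than the claimed $\step(1-\cons^{2})\dnorm{b}^{2}$ for every $\cons>0$ and becomes vacuous (negative) once $\cons\ge\sqrt{2}-1$. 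So the derivation as written does not establish the theorem's constants for arbitrary $\cons<1$, which is precisely the regime the statement covers.

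The remedy is also not the one you gesture at: the symmetric part of $\mat$ cannot be ``absorbed by the discarded variational-stability term'' \textendash\ once a term is discarded there is nothing to absorb into. What the paper does instead is keep the cross term and the square together and complete the square. Using $\current[\stepalt]\le\step$ first and then the identity $2\product{a}{b}-\norm{a}^{2}=\dnorm{b}^{2}-\dnorm{a-b}^{2}$,
\begin{equation}
2\step\product{a}{b}-\current[\stepalt]\norm{a}^{2}
\;\ge\;\step\bigl(2\product{a}{b}-\norm{a}^{2}\bigr)
\;=\;\step\bigl(\dnorm{\vecfield(\current)}^{2}-\step^{2}\dnorm{\mat\vecfield(\current)}^{2}\bigr)
\;\ge\;\step(1-\cons^{2})\dnorm{\vecfield(\current)}^{2},
\notag
\end{equation}
since $a-b=-\step\mat\vecfield(\current)$. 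The point is that the contributions $\product{\mat b}{b}$ of the symmetric part cancel \emph{identically} between $2\product{a}{b}$ and $\norm{a}^{2}$; no monotonicity of $\mat$ is needed at this step at all (\cref{asm:mono} enters only through the discarded term $\product{\vecfield(\interdet)}{\interdet-\sol}\ge0$). With this single correction your one-step contraction holds with the stated constants $\pluscst$ and $\minuscst$, and the remainder of your argument \textendash\ geometric unrolling for Part 1, Chung's lemma with $\minuscst>1$ and $\stepoffset>\stepalt/\step$ for Part 2 \textendash\ goes through verbatim and coincides with the paper's proof.
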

The proof of this theorem relies on the derivation of another descent lemma similar to \cref{lem:dsseg-descent-stoch} but tailored to affine operators.
Note also that \cref{asm:Lipschitz,asm:err-bound} are automatically verified in this case.

\cref{thm:dsseg-affine} mirrors \cref{thm:dsseg-general-rate}; however, in Part \ref{thm:dsseg-affine-a}  of \cref{thm:dsseg-affine}, the final precision is only determined by $\noisevar$ and $\stepalt/\step$. Thus, compared to \cref{thm:dsseg-general-rate}, there is no need to decrease $\step$ to obtain an arbitrarily high accuracy solution. The weaker dependence on $\step$ is further confirmed by Part \ref{thm:dsseg-affine-b}, which shows a $\bigoh(1/\run)$ rate with $\current[\step]$ constant.
As far as we are aware, this result gives the best convergence rate for stochastic affine operators compared to the literature, and it gives yet another motivation for the use of a
double stepsize strategy.



\section{Numerical experiments}
\label{sec:experiments}


\begin{figure*}[t]
    \centering
    \includegraphics[width=0.32\linewidth]{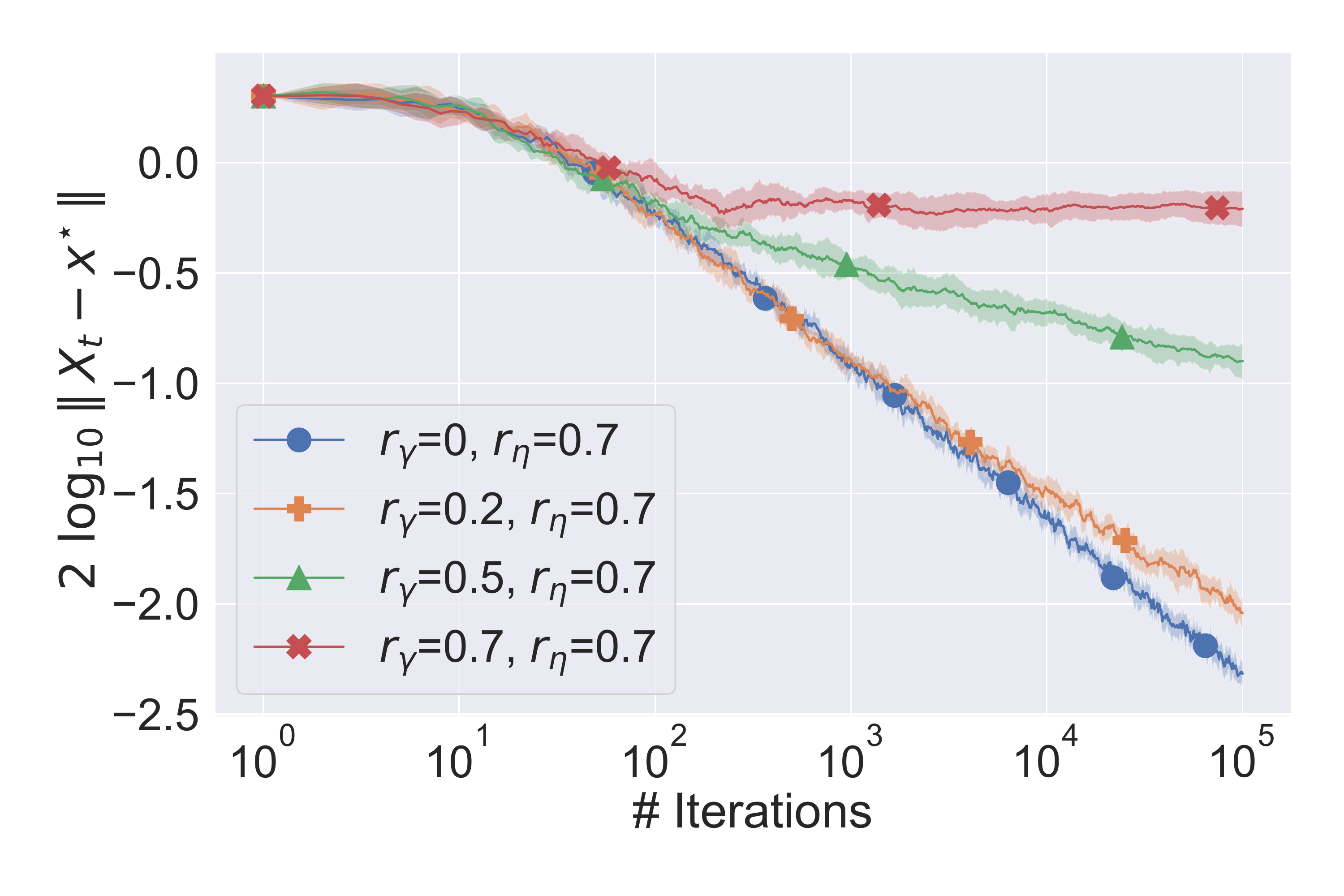}
    \includegraphics[width=0.32\linewidth]{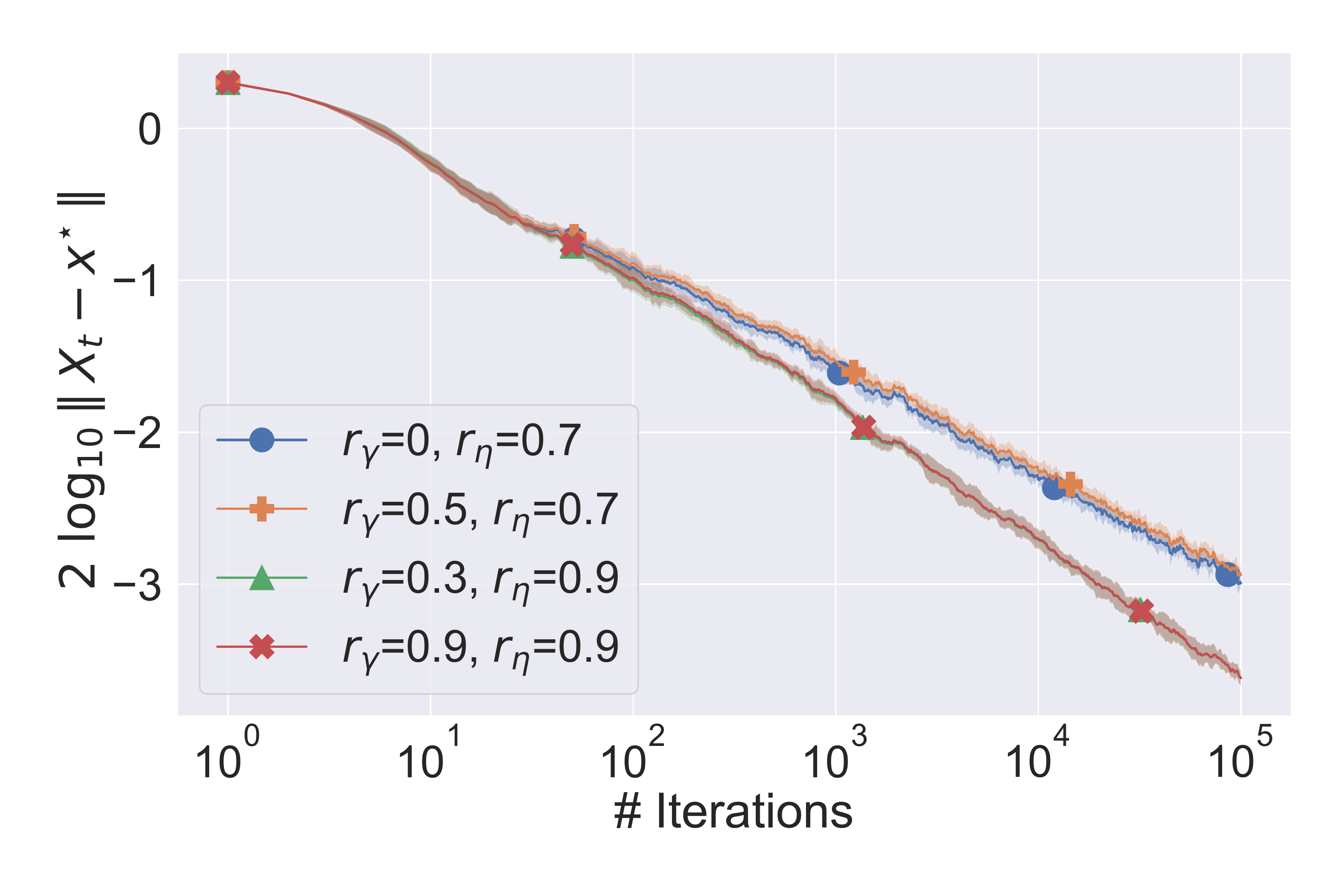}
    \includegraphics[width=0.32\linewidth]{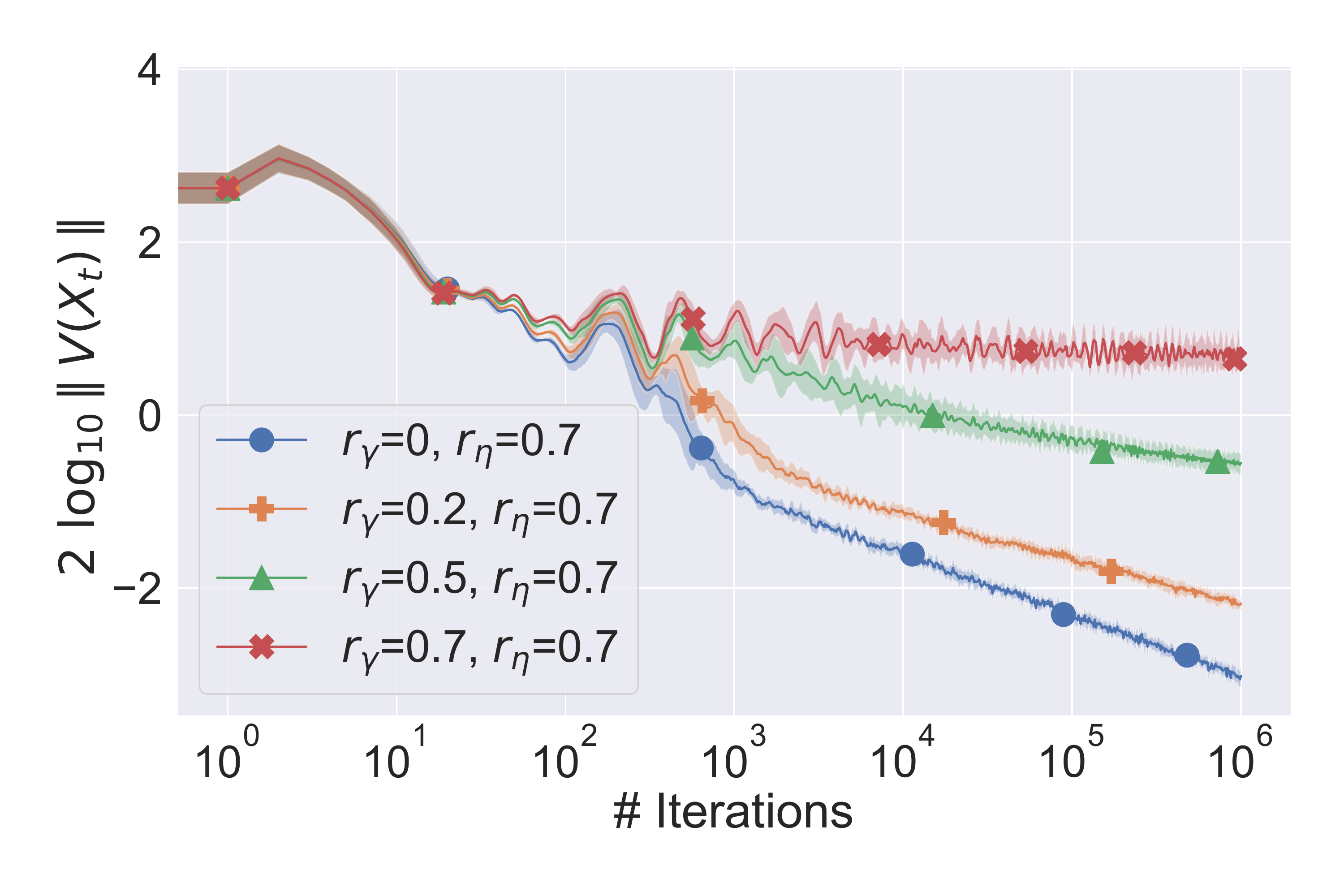}
    \caption{Convergence of a \eqref{eq:DSEG} scheme in stochastic bilinear (left), strongly convex-concave (middle) and non convex-concave linear quadratic Gaussian GAN (right) problems.
    All curves are averaged over $10$ runs with the shaded area indicating the standard deviation.
    The benefit of aggressive exploration is evident.}
    \label{fig:illustration}
    \vskip -\medskipamount
\end{figure*}


This section investigates numerically the benefits of double stepsizes. 
We run \eqref{eq:DSEG} with stepsize of the form \eqref{eq:stepsizes} on three different problems:
\emph{i)} a bilinear zero-sum game, \emph{ii)} a strongly convex-concave game and \emph{iii)} a non convex-concave linear quadratic Gaussian GAN model \cite{DISZ18,NK17}. We examine their behavior when $\exponent_{\step}$ and $\exponent_{\stepalt}$ vary.
The exact description of the problems and the experimental details are deferred to the supplement.

As shown in \cref{fig:illustration}, for bilinear game and Gaussian GAN examples, choosing $\exponent_{\stepalt}<\exponent_{\step}$ turns out to be necessary for the convergence of the algorithm, and the convergence speed is positively related to the difference $\exponent_{\step}-\exponent_{\stepalt}$, as per our analysis.
For a strongly convex-concave problem, it is known that the iterates produced by \eqref{eq:EG} with noisy feedback achieve $\bigoh(1/\run)$ convergence for proper choice of $\seqinf{\step}{\run}$ \cite{KS19,HIMM19}.
Our experiment moreover reveals that when a double step-size policy is considered, the convergence speed of the algorithm seems to only depend on $\seqinf{\stepalt}{\run}$ and using aggressive $\seqinf{\step}{\run}$ has little influence, if any, suggesting that taking a larger exploration step may be a universal solution.
Going one step further, we conduct experiments and observe similar phenomena for the generalized \acl{OG} method \citep{MOP20,RYY19} when the output vector is appropriately chosen.
We refer the interested reader to the supplement for a dedicated discussion.

\section{Conclusion}
\label{sec:conclusion}

In this paper, we examined the benefits of employing a \acli{DSEG} method for which the exploration step is more aggressive than the update step.
This additional flexibility turns out to be both necessary and sufficient for the method to achieve superior convergence properties relative to vanilla stochastic \acl{EG} methods in a large spectrum of problems including bilinear games and some non convex-concave models.

Our results constitute a first attempt towards designing an algorithm that provably avoids cycles and similar non-convergent phenomena in a fully stochastic setting.
Several interesting future directions include an extended analysis with relaxation of the variational stability assumption as well as the design of a fully adaptive and/or universal method on the basis of our results.

\section*{Broader impact}
\label{sec:broad}
This work does not present any foreseeable societal consequence.

\section*{Acknowledgments}
This work has been partially supported by MIAI@Grenoble Alpes, (ANR-19-P3IA-0003).

\bibliographystyle{icml2020}
\bibliography{References/biblio}

\newpage
\appendix
\numberwithin{equation}{section}		
\numberwithin{lemma}{section}		
\numberwithin{proposition}{section}		
\numberwithin{theorem}{section}		

\section{Additional related work}
\label{app:related}
The first analysis of \acdef{EG} with stochastic feedback traces back to the work of \citet{JNT11}, where a $\bigoh(1/\sqrt{\run})$ ergodic convergence was shown for monotone problems, and this rate is known to be optimal without further assumptions \cite{NY83}.%
\footnote{Precisely, the results of \cite{JNT11,KS19,MLZF+19} concern the more general \acl{MP} algorithm, which generalized \acl{EG} to the Bregman setting.}
Since then, a large number of works have been dedicated to studying the convergence behavior of stochastic \ac{EG}-type algorithms, either for better understanding of the algorithm itself or in the hope of finding a better way to incorporate \ac{EG} with stochasticity.

Almost sure convergence of stochastic \ac{EG} was first investigated in \citet{KS19}. In the said paper, almost convergence was shown for \emph{pseudomonotone plus} operators and by additionally assuming that the map is \emph{strongly pseudomonotone} or \emph{monotone and weak-sharp}, the authors managed to prove a $\bigoh(1/{\run})$ convergence of the iterate produced by the algorithm.
In \cite{MLZF+19}, the pseudo-monotonicity-plus assumption is relaxed to show that stochastic \ac{EG} still enjoys last-iterate convergence in \emph{strict coherent} problems.
Nonetheless, these results fail to justify the use of \ac{EG} for stochastic monotone problems, as illustrated in \cref{sec:EG}.
Therefore, to improve the convergence behavior of \ac{EG} in stochastic problems, several modifications to the original stochastic \ac{EG} have been proposed \cite{CGFLJ19,IJOT17,MKSR+20}.
In addition to the ones discussed in \cref{sec:intro}, \citet{MKSR+20} advocated a repeated sampling strategy and illustrated numerically its better performance when applied to GAN training. They also showed that their proposed algorithm retain the same convergence guarantee as traditional stochastic \ac{EG}.

In order to reduce the overall computational cost, another line of research aims at designing optimization methods that solve variational problems with a single oracle call per iteration (instead of the two in \ac{EG}).
Algorithms of this family include for example \acdef{OG} \cite{DISZ18} and \acdef{PEG} \cite{GBVV+19,Pop80}.
See \citet{HIMM19} for a recent overview and corresponding treatment in the stochastic setting.
Very recently, the convergence of stochastic \ac{OG} are further improved in two different ways.
In \cite{FOP20}, the authors introduced a multistage version of \ac{OG} for stochastic strongly monotone problems to optimize the dependence of convergence speed on initial error and noise characteristics.
On the other hand, inspired by the success of adaptive methods in deep learning, \citet{LMRZ+20} designed an adaptive variant of \ac{OG} and showed that it enjoyed an adaptive complexity that varies according to the growth rate of the cumulative stochastic gradient.
To complete the list, also in the goal of reducing overall computation though under a quite different perspective, \citet{SCDAJ19} analyzed a randomized version of stochastic \ac{EG} in multiplayer game to make the extrapolation step amenable to massive multiplayer settings.

\section{Generalized optimistic gradient}
\label{app:beyond-eg}


Considering the similarity between \ac{EG} and its single-call variants, we believe our analysis on \eqref{eq:DSEG} also suggests essential modifications in terms of stepsizes that should be carried out for these algorithms in the face of stochasticity.
As an example, we investigate the \ac{OG} method of \citet{DISZ18}, and find out that some surprising conclusions can be drawn after applying the double stepsize rule.
The generalized \ac{OG} recursion is commonly stated as follows \citep{MOP20,RYY19}:
\begin{equation} \label{eq:OG}
    \tag{OG}
    \update = \current - \current[\stepalt] \current[\svecfield] - \current[\step] (\current[\svecfield]-\last[\svecfield])
\end{equation}
where $\current[\step]$ is sometimes called the \emph{optimism} rate. Similarly to our conclusions, it has been empirically observed that taking large optimism rate often yields better convergence in stochastic problems \cite{PDZC19}.

\citet{HIMM19} pointed out that  \ac{OG} is equivalent to the modified Arrow-Hurwitz method introduced by \citet{Pop80} and also referred to as \ac{PEG} by \citet{GBVV+19}. Using a double stepsize policy,  \ac{PEG} becomes:
\begin{equation}
\label{eq:DSPEG}
\tag{DSPEG}
\begin{aligned}
\inter
	= \current - \current[\step] \past[\svecfield],
	\quad
\update
	= \current - \current[\stepalt] \inter[\svecfield].
\end{aligned}
\end{equation}
Hence, leading states can be recursively written as
\begin{equation}
    \inter = \past - \pastpast[\stepalt] \past[\svecfield] - \past[\step] \past [\svecfield] + \pastpast[\step]\pastpast[\svecfield]. \notag
\end{equation}
We thereby see that \eqref{eq:OG} and \eqref{eq:DSPEG} are almost equivalent and they mostly differ in the choice of vectors that the method outputs at the end:
\ac{OG} suggests outputting $\current$ while \ac{PEG} instead looks at $\current+\last[\step]\last[\svecfield]$.
This nuance turns out to be of importance when generalized \ac{OG} is applied to stochastic problems.
By analogy with our analysis for \eqref{eq:DSEG}, we reasonably conjecture that taking $\current[\stepalt]<\current[\step]$ guarantees the convergence of $\current+\last[\step]\last[\svecfield]$, and this may occur even if $\current[\step]$ is set to constant.
Nonetheless, this also implies that if the noise is not vanishing at the solution, $\current$, which corresponds to the exploration state in \ac{PEG}, might exhibit much slower convergence or even not converge at all.

To summarize, when running \eqref{eq:OG} for stochastic problems,
we should look at the \emph{residual iterate} $\current+\last[\step]\last[\svecfield]$ instead of the \emph{optimistic iterate} $\current$.
Interestingly, this conclusion is consistent with the ODE analysis of \ac{OG} by \citet{RYY19}, and explains some experimental results of said work. Furthermore, taking an aggressive exploration step $\current[\step]$ and a more conservative update step $\current[\stepalt]$ may be very beneficial both in theory (for the last iterate convergence and rate) and in practice as confirmed by our experiments just below.

\section{Experimental details and additional experiments}
\label{app:exps}

\begin{figure*}[t]
    \centering
    \begin{subfigure}[b]{\linewidth}
    \includegraphics[width=0.33\linewidth]{Figures/bilinear_EG_fix_eta}
    \includegraphics[width=0.33\linewidth]{Figures/strong_EG}
    \includegraphics[width=0.33\linewidth]{Figures/lc_EG_fix_eta}
    \end{subfigure}
    \begin{subfigure}[b]{\linewidth}
    \includegraphics[width=0.33\linewidth]{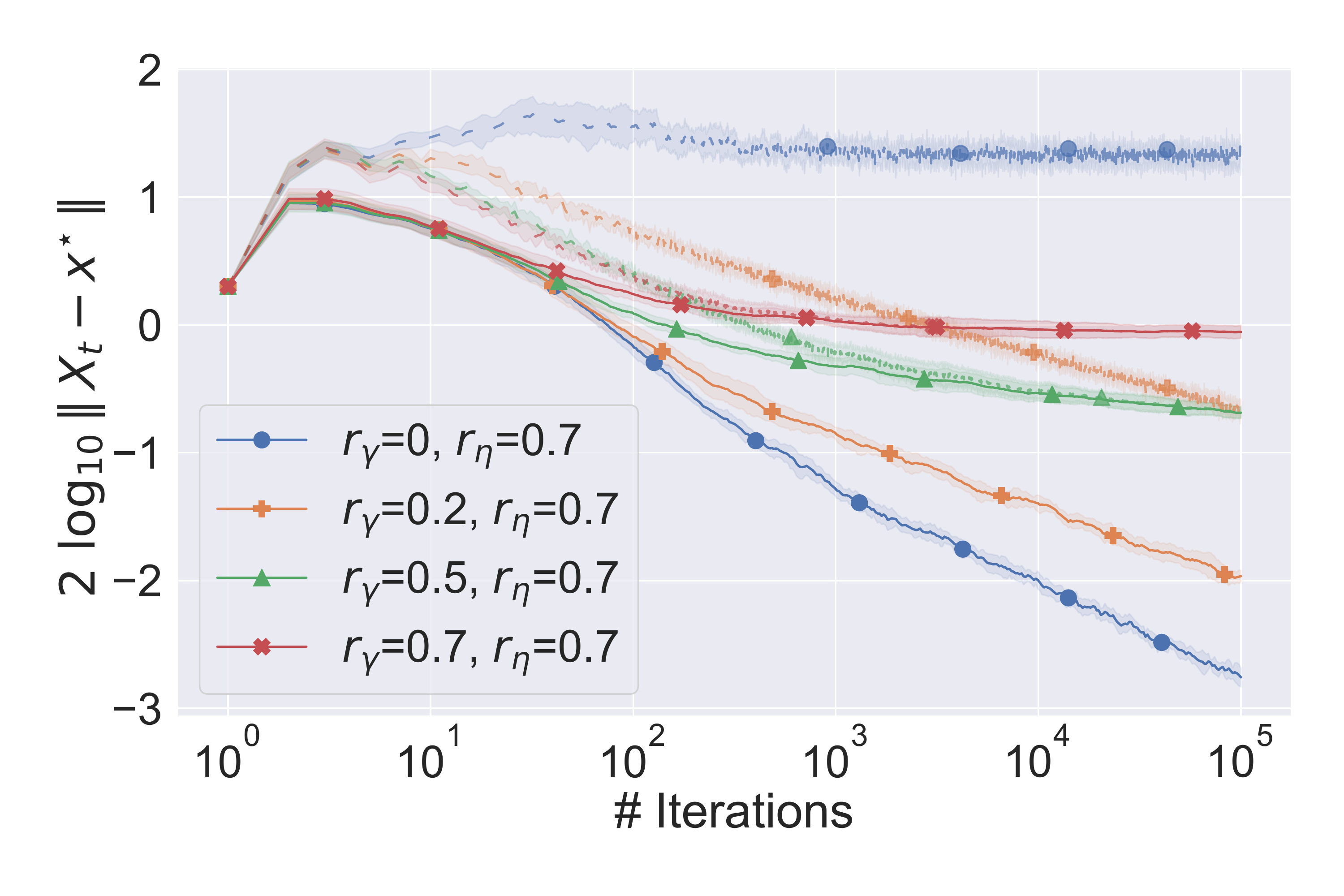}
    \includegraphics[width=0.33\linewidth]{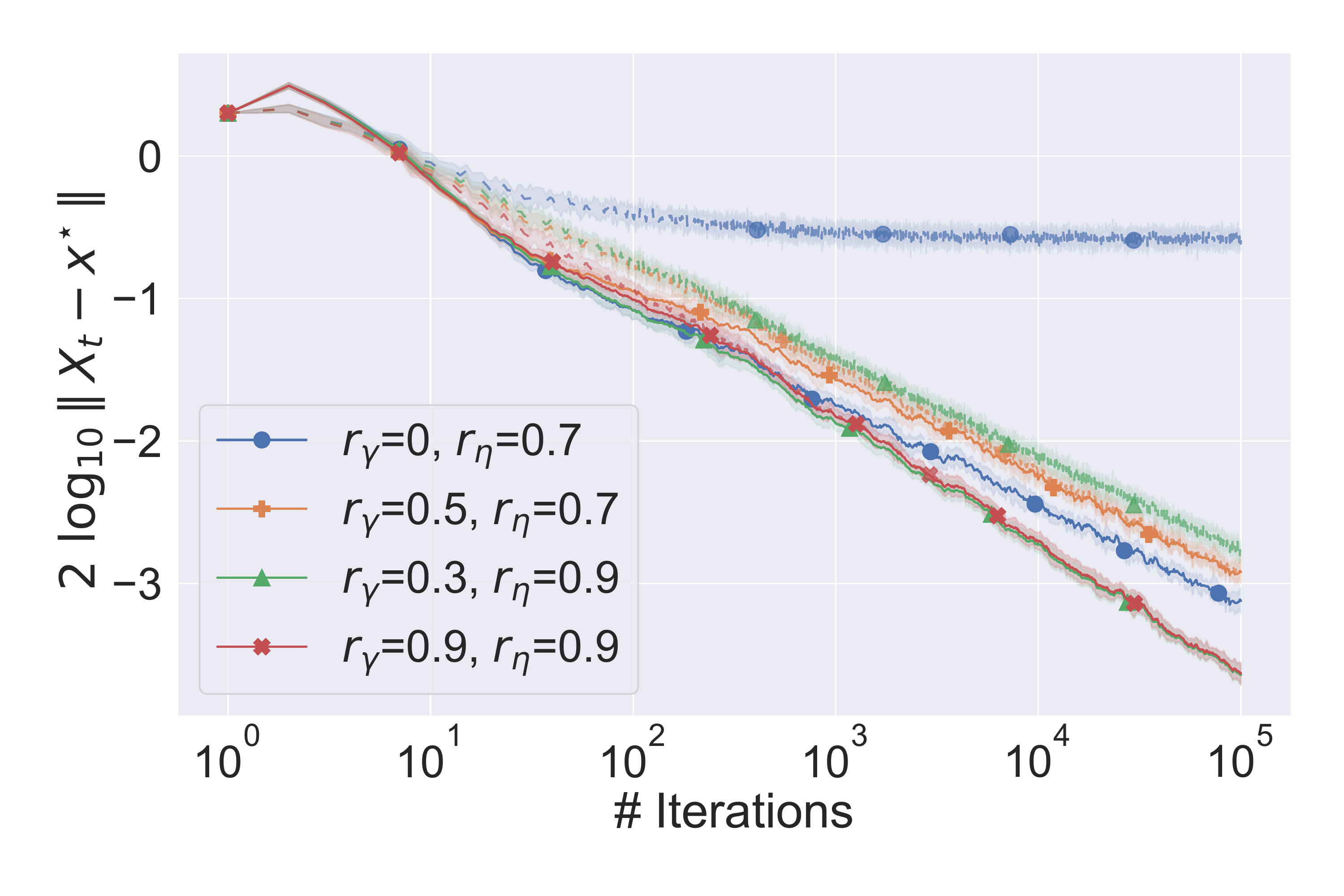}
    \includegraphics[width=0.33\linewidth]{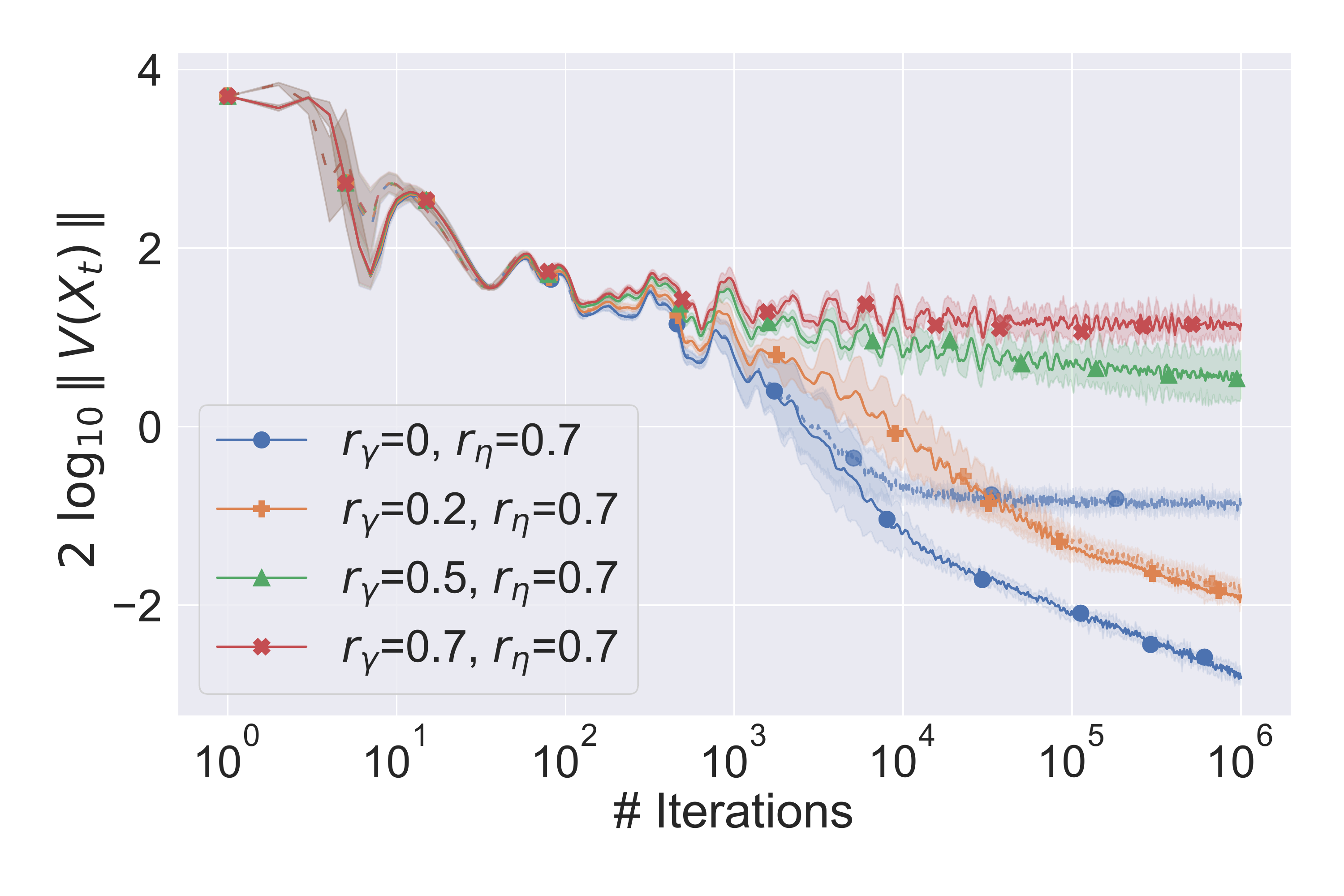}
    \end{subfigure}
    \caption{Convergence of \eqref{eq:DSEG} (top) and \eqref{eq:OG} (bottom) schemes in stochastic bilinear (left), strongly convex-concave (middle) and non convex-concave covariance matrix learning (right) problems.
    In the second row the dashed lines and the solid lines depict respectively the results for optimistic iterates and residual iterates.
    We observe clearly the benefit of \emph{(i)} aggressive exploration and \emph{(ii)} using residual iterates in generalized \ac{OG} methods.
    All curves are averaged over $10$ runs with the shaded area indicating the standard deviation.}
    \label{fig:illustration-full}
\end{figure*}

We provide here a detailed explanation of the problems that we consider in our experiments and elucidate the used parameters.
Additional experimental results are also presented.

\para{Bilinear zero-sum games}
The bilinear zero-sum game takes the form
\begin{equation}
    \sadobj(\minvar,\maxvar) = \minvar^{\top}\!\matlin\maxvar \notag
\end{equation}
where $\matlin$ is a $50\times 50$ invertible matrix in our experiment; in that case, $(\sol[\minvar], \sol[\maxvar]) = (0,0)$ is the only equilibrium point.
We simulate the stochastic oracle by adding a Gaussian noise $\noise\sim\mathcal{N}(0, \noisedev\idm)$ with $\noisedev=0.5$ to the vector field. 



\para{Strongly convex-concave game}
To understand the effect of aggressive exploration in strongly convex-concave problems, we inspect the following example
%
\makeatletter%
\if@twocolumn%
    \begin{align}
    \sadobj(\minvar,\maxvar) &=
    \big(\minvar^\top\!\matmin_2\minvar\big)^2
    + 2\minvar^\top\!\matmin_1\minvar \notag\\
    &~~+ 4 \minvar^\top\!\matlin\maxvar
    - 2\maxvar^\top\!\matmax_1\maxvar
    - \big(\maxvar^\top\!\matmax_2\maxvar\big)^2, \notag
    \end{align}
\else
    \begin{equation}
    \sadobj(\minvar,\maxvar) =
    \big(\minvar^\top\!\matmin_2\minvar\big)^2
    + 2\minvar^\top\!\matmin_1\minvar
    + 4 \minvar^\top\!\matlin\maxvar
    - 2\maxvar^\top\!\matmax_1\maxvar
    - \big(\maxvar^\top\!\matmax_2\maxvar\big)^2, \notag
    \end{equation}
\fi
where $\matmin_1$, $\matmin_2$, $\matmax_1$, $\matmax_2$ are $50\times 50$ positive definite matrices
so $(\sol[\minvar], \sol[\maxvar]) = (0,0)$ is again the only solution of the problem.
We take the same noise distribution to construct the stochastic oracle.


\para{Linear Quadratic Gaussian GAN}
Finally, to examine the convergence of \eqref{eq:DSEG} in stochastic non convex-concave problems, we consider the following problem from \citet{DISZ18} and 
\citet{NK17}:
\begin{align}
    \sadobj(\generatorMat, \discriminatorMat)
    = \ex_{\sampleexp\sim\gaussian(0,\covariance)}[\sampleexp^{\top}\!\discriminatorMat\sampleexp]
    -\ex_{\noiseexp\sim\gaussian(0,\idm)}[
    \noiseexp^{\top}\!\generatorMat^{\top}\!\discriminatorMat\generatorMat\noiseexp]. \notag
\end{align}
This saddle-point problem corresponds to the WGAN formulation without clipping when data are sampled from a normal distribution with covariance matrix $\covariance$, \ie $\sampleexp\sim\gaussian(0,\covariance)$, and the generator and the discriminator are respectively defined by $\generator(\noiseexp)=\generatorMat\noiseexp$, $\discriminator(\sampleexp)=\sampleexp^{\top}\!\discriminatorMat\sampleexp$.
The stochasticity is induced by the sampling of $\sampleexp$ and $\noiseexp$.
For the experiments we take a mini-batch of size $128$ and $\sampleexp$ and $\noiseexp$ of dimension $10$.
As the game may possess multiple equilibria, the squared norm of $\vecfield$ is traced as the convergence measure.

\begin{table}[t]
    \centering
    \small
    \renewcommand{\arraystretch}{1.1}
    \begin{tabularx}{\textwidth}{l*{6}{>{\centering\arraybackslash}X}}
    \toprule
    & \multicolumn{3}{c}{Double stepsize extragradient \eqref{eq:DSEG}}
    & \multicolumn{3}{c}{Generalized optimistic gradient \eqref{eq:OG}}\\
    \cmidrule(lr){2-4}
	\cmidrule(lr){5-7}
	& $\step_{\start}$ & $\stepalt_{\start}$ & $\stepoffset$
	& $\step_{\start}$ & $\stepalt_{\start}$ & $\stepoffset$\\
	\midrule
    Bilinear & 1 & 0.1 & 19 & 0.5 & 0.05 & 19 \\
    Strongly convex-concave & 0.1 & 0.05 & 19 & 0.1 & 0.05 & 19 \\
    Gaussian GAN & 0.5 & 0.05 & 49 & 0.05 & 0.025 & 99\\
    \bottomrule
    \end{tabularx}
    \vspace{2ex}
    \caption{The stepsize parameters for \eqref{eq:DSEG} and \eqref{eq:OG} in the experiments.}
    \label{tab:parameters}
    \vskip -1.5em
\end{table}

\para{Results for \eqref{eq:DSEG} and \eqref{eq:OG}}

Following the discussion of \cref{app:beyond-eg}, we complement the illustration of our method \eqref{eq:DSEG} by a comparison with \eqref{eq:OG} with properly chosen outputs. 
In the experiments, both \eqref{eq:DSEG} and \eqref{eq:OG} are run with stepsize of the form \eqref{eq:stepsizes} with various $\exponent_{\step}$ and $\exponent_{\stepalt}$.
In order to start with the same value for different exponents, we fix $\stepoffset, \step_1,$ and $\stepalt_1$ as indicated in \cref{tab:parameters}, from which we deduce $\step = \step_1 (1+\stepoffset)^{\exponent_{\step}}$ and $\stepalt = \stepalt_1 (1+\stepoffset)^{\exponent_{\stepalt}}$.

As shown in \cref{fig:illustration-full}, for bilinear game and Gaussian GAN examples, the convergence speed of \eqref{eq:DSEG} is positively related to the difference $\exponent_{\step}-\exponent_{\stepalt}$, as per our analysis. For the strongly convex-concave problem, the vanilla \eqref{eq:EG} already achieves $\bigoh(1/\run)$ convergence, and the plot shows that using aggressive $\seqinf{\step}{\run}$ has little influence on it.

Regarding \eqref{eq:OG} with the residual iterates, the algorithm has roughly the same convergence behavior as for \eqref{eq:DSEG}.
In contrast, the optimistic iterates tend to converge much slower.
In particular, choosing a constant exploration step gives the fastest convergence of the residual iterate though the optimistic iterate does not converge, in line with our discussion in \cref{app:beyond-eg}.

\para{Additional discussions for bilinear games}

Few algorithms provably converge in stochastic bilinear games, and among them there are \ac{SHGD} \cite{LBJV+20} and gradient descent with anchoring \cite{RYY19}.
In \cref{fig:bilinear_compare} we illustrate the  convergences of \ac{DSEG} and these two algorithms for the stochastic bilinear saddle-point example.
For \eqref{eq:DSEG} we adopt the optimal stepsize schedule as described in \cref{thm:dsseg-affine}-\ref{thm:dsseg-affine-b}.
The leading stepsize is set to constant $\current[\step]\equiv1$ and the update stepsize is $\current[\stepalt]=\stepalt/(\run+\stepoffset)$ with $\stepalt=2$ and $\stepoffset=19$.
The same $\seqinf{\stepalt}{\run}$ is also used as the stepsize of \ac{SHGD}, in accordance with the decreasing stepsize strategy presented in \cite{LBJV+20}.
As for the anchored gradient methods, its update is written as
\begin{equation}
\notag
    \update
    = \current - \frac{1-\exponent}{\run^{\exponent}} + \frac{(1-\exponent)\step}{\run^\exponentalt}(\state_{\start}-\current),
\end{equation}
and it is proved to converge in all stochastic monotone problems for $\step>0$ and $\exponent,\exponentalt\in(1/2,1)$.
Since no explicit rate is proven for this algorithm when stochastic gradients are used,
\begin{wrapfigure}{br}{0.4\textwidth}
    \centering
    \includegraphics[width=0.4\textwidth]{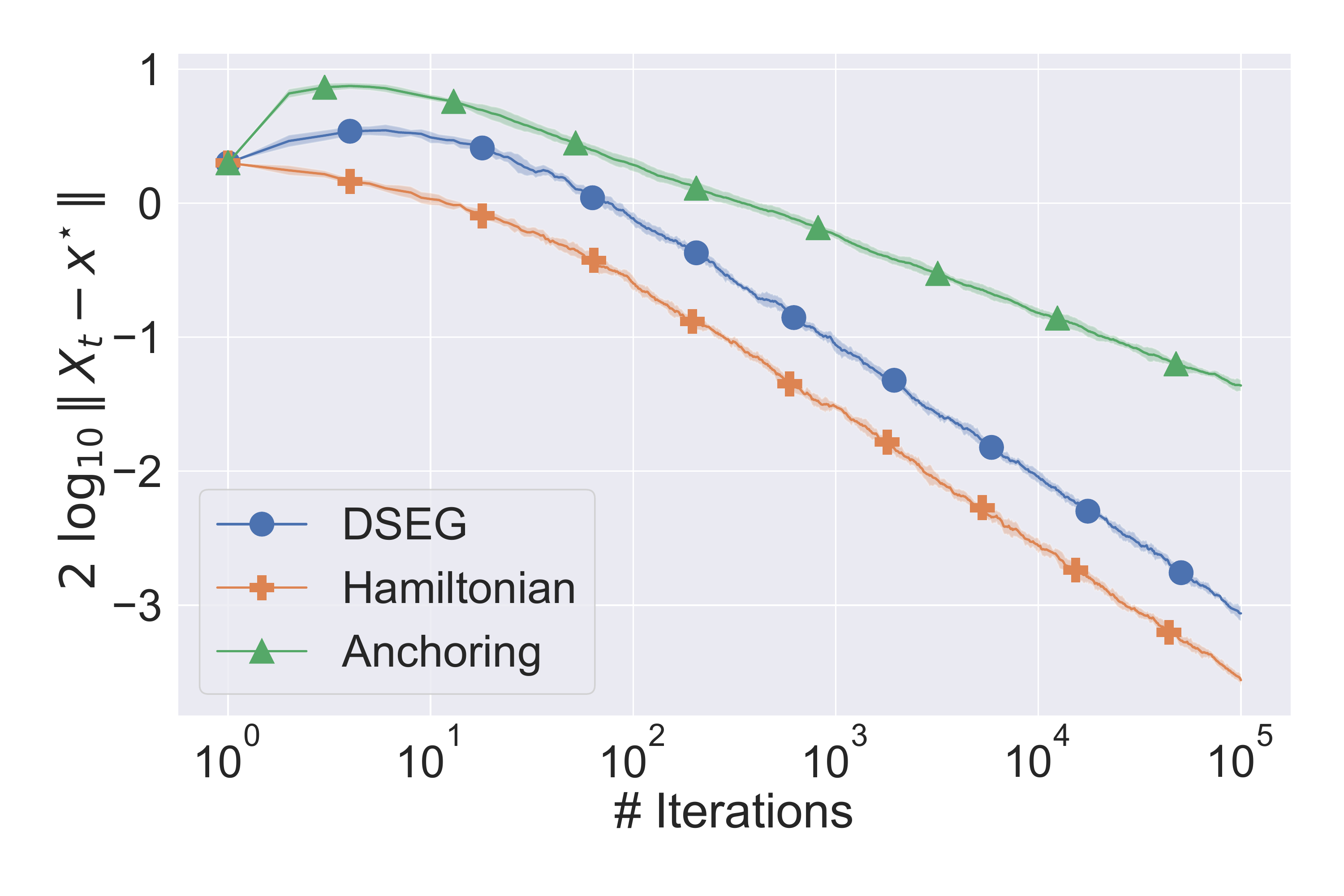}
    \caption{Comparison of \ac{DSEG}, stochastic Hamiltonian gradient descent and anchored gradient in the stochastic bilinear example. All curves are averaged over $10$ runs with the shaded area indicating the standard deviation.}
    \label{fig:bilinear_compare}
    \vskip -1.5em
\end{wrapfigure}
we run hyperparameter optimization to search for the best $\step,\exponent$ and $\exponentalt$, and end up with $\step=1, \exponent=0.7, \exponentalt=0.9$.

\cref{fig:bilinear_compare} confirms that asymptotically both \ac{DSEG} and \ac{SHGD} converge in $\bigoh(1/\run)$ as predicted by the theory.
\ac{SHGD} converges slightly faster than \ac{DSEG} for the first few iterations as it circumvents the rotational dynamics by directly performing stochastic gradient descent on $\norm{\vecfield(\cdot)}^2$, which turns out to be a positive definite quadratic form when $\vecfield$ is linear.
This however comes at the cost of the use of second-order information.
In fact, $\ac{SHGD}$ requires access to an unbiased estimator of $\Jac_{\vecfield}^\top\vecfield$ at every iteration.
Finally, anchoring converges much slower compared to these two methods.
Without further theoretical investigation we do not know if this kind of algorithms can achieve the same $\bigoh(1/\run)$ convergence rate in this problem.

\section{Technical lemmas}
\label{app:tech-lemmas}

In this section we recall several important lemmas that are frequently used in the analysis of stochastic iterative methods.
The first three lemmas on numerical sequences are useful for deriving convergence rates of the algorithms.
See \eg \citet{Pol87} for an abundance of results of this type.

\begin{lemma}
\label{lem:cst-noise-descent}
Let $\seqinf{\seqitem}{\run}$ be a sequence of real numbers
such that for all $\run$,
\begin{equation}
\notag
    \update[\seqitem]
    \le
    (1-\consc)\current[\seqitem] +\conscalt,
\end{equation}
where $1 > \consc > 0$ and $\conscalt > 0$.
Then,
\begin{equation}
\notag
    \seqitem_\run
    \le
    (1-\consc)^{\run-1}\seqitem_1+\frac{\conscalt}{\consc}.
\end{equation}
\end{lemma}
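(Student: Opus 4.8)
The plan is to treat this as a standard contracting affine recursion and unroll it by induction on $\run$. The map underlying the hypothesis, $x \mapsto (1-\consc)x + \conscalt$, has the unique fixed point $\conscalt/\consc$ (obtained by solving $x = (1-\consc)x + \conscalt$), and since $0 < 1-\consc < 1$ the iteration contracts toward this value at the geometric rate $1-\consc$. This is exactly the content of the claimed bound, so the proof reduces to verifying two ingredients: geometric decay of the initial term and the correct limiting offset $\conscalt/\consc$.

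First I would check the base case $\run = 1$, where the asserted inequality reads $\seqitem_1 \le \seqitem_1 + \conscalt/\consc$; this holds because $\conscalt/\consc > 0$. For the inductive step, assuming $\current[\seqitem] \le (1-\consc)^{\run-1}\seqitem_1 + \conscalt/\consc$, I would feed this into the hypothesis. Since $1-\consc > 0$, multiplying the inductive bound by $(1-\consc)$ preserves the inequality, giving
\[
\update[\seqitem] \le (1-\consc)\current[\seqitem] + \conscalt \le (1-\consc)^{\run}\seqitem_1 + (1-\consc)\frac{\conscalt}{\consc} + \conscalt.
\]
The whole argument then hinges on the algebraic identity $(1-\consc)\conscalt/\consc + \conscalt = \conscalt/\consc$, which collapses the constant terms to exactly the claimed offset and yields $\update[\seqitem] \le (1-\consc)^{\run}\seqitem_1 + \conscalt/\consc$, completing the induction.

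There is no genuine obstacle here; the only points requiring minor care are the two sign conditions. One must use $1-\consc > 0$ to preserve the direction of the inequality when propagating the inductive hypothesis through the affine map, and $\conscalt/\consc > 0$ both to validate the base case and, if one argues via the shifted sequence, to discard a nonpositive remainder term. Indeed, an equivalent and equally short route is to set $v_\run = \current[\seqitem] - \conscalt/\consc$, observe that the hypothesis becomes $v_{\run+1} \le (1-\consc)v_\run$, conclude $v_\run \le (1-\consc)^{\run-1} v_1$, and then drop the nonpositive contribution $-(1-\consc)^{\run-1}\conscalt/\consc$ to recover the stated form. I would present the direct induction as the cleaner write-up.
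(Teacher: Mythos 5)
Your proof is correct. The paper itself gives no proof of this lemma at all \textendash\ it is stated in the ``Technical lemmas'' appendix as a standard result, with a pointer to Polyak's book for ``an abundance of results of this type'' \textendash\ so there is nothing to compare against; your induction (base case from $\conscalt/\consc > 0$, inductive step via multiplication by $1-\consc > 0$, and the collapsing identity $(1-\consc)\conscalt/\consc + \conscalt = \conscalt/\consc$) is a complete and correct verification, and the fixed-point/shifted-sequence variant you sketch is equally valid.
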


The above lemma comes into play when an algorithm is run with constant stepsize sequences, whereas we resort to the following two lemmas in case of decreasing stepsize sequences of the form \eqref{eq:stepsizes}.

\vspace{0.5em}
\begin{lemma}[{{\citet[Lemma~1]{Chu54}}}]
\label{lem:chung1954}
Let $\seqinf{\seqitem}{\run}$ be a sequence of real numbers
and $\stepoffset\in\N$
such that for all $\run$,
\begin{equation}
\notag
    \label{eq:chung_rec}
    \update[\seqitem]
    \le
    \left(1-\frac{\consc}{\run+\stepoffset}\right)\current[\seqitem]
    + \frac{\conscalt}{(\run+\stepoffset)^{\exponent+1}},
\end{equation}
where $\consc > \exponent > 0$ and $\conscalt > 0$.
Then,
\begin{equation}
\notag
    \label{eq:chung_bound}
    \seqitem_\run
    \le
    \frac{\conscalt}{\consc-\exponent}\frac{1}{\run^\exponent} + \smalloh\left(\frac{1}{\run^\exponent}\right).
\end{equation}
\end{lemma}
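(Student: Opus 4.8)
The statement is Chung's classical lemma, and its conclusion $\seqitem_\run\le\frac{\conscalt}{\consc-\exponent}\frac{1}{\run^\exponent}+\smalloh(1/\run^\exponent)$ is equivalent to $\limsup_{\run\to\infty}\run^{\exponent}\seqitem_\run\le\conscalt/(\consc-\exponent)$, so that is what I would target. The plan is a comparison (supersolution) argument. First I would fix an arbitrary $\epsilon>0$, set $K=\conscalt/(\consc-\exponent)+\epsilon$, introduce the explicit envelope $E_\run=K/(\run+\stepoffset)^{\exponent}$, and track the deviation $w_\run=\seqitem_\run-E_\run$; the goal is to show $w_\run=\smalloh(\run^{-\exponent})$, after which letting $\epsilon\downarrow0$ delivers the sharp constant.

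The heart of the argument is to show that $E_\run$ is a supersolution of the recursion, i.e. that for every $\run$, writing $m=\run+\stepoffset$,
\[
\Bigl(1-\frac{\consc}{\run+\stepoffset}\Bigr)E_\run+\frac{\conscalt}{(\run+\stepoffset)^{\exponent+1}}\le E_{\run+1}.
\]
Plugging in the definition of $E_\run$, this reduces to $K\bigl(m^{-\exponent}-(m+1)^{-\exponent}\bigr)\le(\consc K-\conscalt)\,m^{-(\exponent+1)}$. I would dispatch the left-hand side with the elementary estimate $m^{-\exponent}-(m+1)^{-\exponent}=\int_m^{m+1}\exponent\,x^{-(\exponent+1)}\,dx\le\exponent\,m^{-(\exponent+1)}$, so it suffices that $K\exponent\le\consc K-\conscalt$, equivalently $K(\consc-\exponent)\ge\conscalt$. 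This holds by the very choice $K\ge\conscalt/(\consc-\exponent)$ together with $\consc>\exponent$.

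Substituting $\seqitem_\run=w_\run+E_\run$ into the hypothesised recursion and using the supersolution inequality to absorb the $E$-part into $E_{\run+1}$, I obtain $w_{\run+1}\le\bigl(1-\frac{\consc}{\run+\stepoffset}\bigr)w_\run$ for every $\run\ge N$, where $N$ is chosen large enough that $\consc/(\run+\stepoffset)<1$, making the multiplier positive. Iterating from $N$ gives $w_\run\le w_N^{+}\prod_{k=N}^{\run-1}\bigl(1-\frac{\consc}{k+\stepoffset}\bigr)$, and $\ln(1-x)\le-x$ yields $\prod_{k=N}^{\run-1}\bigl(1-\frac{\consc}{k+\stepoffset}\bigr)=\bigoh(\run^{-\consc})$. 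Since $\consc>\exponent$ this is $\smalloh(\run^{-\exponent})$, hence $w_\run=\smalloh(\run^{-\exponent})$ and $\run^{\exponent}\seqitem_\run=\run^{\exponent}E_\run+\run^{\exponent}w_\run\to K$; letting $\epsilon\downarrow0$ finishes the proof.

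The main obstacle is the sign bookkeeping that makes the one-step contraction usable for a sequence of a priori arbitrary sign. The multiplier $1-\consc/(\run+\stepoffset)$ is only guaranteed positive once $\run\ge N$, and $w_\run$ may be positive or negative; the clean resolution is to bound by the positive part $w_N^{+}$, which simultaneously covers the case where $w_\run$ stays positive (the product then forces decay at the faster rate $\run^{-\consc}$) and the case where it crosses below zero (after which it remains nonpositive, so $\seqitem_\run\le E_\run$ outright). It is worth stressing that the hypothesis $\consc>\exponent$ is used in two essential places: to certify the envelope as a supersolution ($\consc K-\conscalt\ge K\exponent\ge0$) and to guarantee that the transient $\bigoh(\run^{-\consc})$ is negligible against the target rate $\run^{-\exponent}$.
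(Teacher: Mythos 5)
The paper does not actually prove this lemma \textendash\ it is imported verbatim from \citet[Lemma~1]{Chu54} \textendash\ so there is no internal proof to compare against; your argument has to stand on its own, and it does. It is the classical comparison argument: you exhibit the explicit envelope $E_\run = K/(\run+\stepoffset)^{\exponent}$ with $K=\conscalt/(\consc-\exponent)+\epsilon$ as a supersolution of the recursion (the integral estimate $m^{-\exponent}-(m+1)^{-\exponent}\le\exponent\,m^{-(\exponent+1)}$ reduces the supersolution property exactly to $K(\consc-\exponent)\ge\conscalt$, which your choice of $K$ guarantees), deduce the one-step contraction $w_{\run+1}\le\bigl(1-\consc/(\run+\stepoffset)\bigr)w_\run$ for the deviation, chain it once the multipliers are nonnegative, and kill the transient via $\prod_{k=N}^{\run-1}\bigl(1-\consc/(k+\stepoffset)\bigr)=\bigoh(\run^{-\consc})=\smalloh(\run^{-\exponent})$, which is where $\consc>\exponent$ enters. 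The positive-part device $w_N^{+}$ correctly handles the sign bookkeeping.

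One sentence in your wrap-up should be rephrased, though it does not affect the validity of the conclusion. You assert that $w_\run=\smalloh(\run^{-\exponent})$ and that $\run^{\exponent}\seqitem_\run\to K$; both are two-sided claims, whereas your argument (correctly) yields only the upper bound $w_\run\le w_N^{+}\cdot\bigoh(\run^{-\consc})$. Nothing prevents $\seqitem_\run$ from sitting far below the envelope: $\seqitem_\run\equiv 0$ satisfies the hypotheses, yet then $w_\run=-E_\run$ is not $\smalloh(\run^{-\exponent})$ and $\run^{\exponent}\seqitem_\run\to 0\neq K$. Moreover, taken literally, ``$\run^{\exponent}\seqitem_\run\to K$ for every $\epsilon$'' is self-contradictory, since $K$ depends on $\epsilon$ while the limit does not. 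What you have actually proved, and all that is needed, is the one-sided statement $\limsup_{\run\to\infty}\run^{\exponent}\seqitem_\run\le K$ for every $\epsilon>0$, whence $\limsup_{\run\to\infty}\run^{\exponent}\seqitem_\run\le\conscalt/(\consc-\exponent)$; as you note at the outset, this is equivalent to the (purely upper-bound) conclusion of the lemma. With that sentence corrected, the proof is complete.
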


\vspace{0.5em}
\begin{lemma}[{{\citet[Lemma~4]{Chu54}}}]
\label{lem:chung1954-bis}
Let $\seqinf{\seqitem}{\run}$ be a sequence of real numbers
and $\stepoffset\in\N$
such that for all $\run$,
\begin{equation}
    \notag
    \update[\seqitem]
    \le
    \left(1-\frac{\consc}{(\run+\stepoffset)^{\exponentalt}}\right)\current[\seqitem]
    + \frac{\conscalt}{(\run+\stepoffset)^{\exponent+\exponentalt}},
\end{equation}
where $1 > \exponentalt > 0$ and $\exponent,\consc,\conscalt > 0$.
Then,
\begin{equation}
\notag
    \seqitem_\run
    =
    \bigoh\left(\frac{1}{\run^\exponent}\right).
\end{equation}
\end{lemma}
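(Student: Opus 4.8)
The plan is to prove the result following the classical comparison-and-induction argument of Chung: I would exhibit a constant $K>0$ and an index $\run_{0}$ such that $\current[\seqitem]\le K/(\run+\stepoffset)^{\exponent}$ for every $\run\ge\run_{0}$. Since $(\run+\stepoffset)^{\exponent}\ge\run^{\exponent}$ and only finitely many early terms are excluded, this bound immediately gives $\seqitem_\run=\bigoh(1/\run^{\exponent})$. Concretely, I set the comparison quantity $w_\run:=K/(\run+\stepoffset)^{\exponent}$ and show by induction that the recursion preserves the inequality $\current[\seqitem]\le w_\run$ for $\run\ge\run_{0}$.

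For the induction step, I first note that for $\run\ge\run_{0}$ large the factor $1-\consc/(\run+\stepoffset)^{\exponentalt}$ is nonnegative (this holds once $(\run+\stepoffset)^{\exponentalt}\ge\consc$), so the hypothesis $\current[\seqitem]\le w_\run$ may be inserted monotonically into the recursion. A direct computation then turns the target inequality $\update[\seqitem]\le w_{\run+1}$ into $w_\run-w_{\run+1}\le(\consc K-\conscalt)(\run+\stepoffset)^{-\exponent-\exponentalt}$. Estimating the telescoping difference on the left by the mean value theorem (equivalently, by convexity of $x\mapsto x^{-\exponent}$) gives $w_\run-w_{\run+1}\le K\exponent(\run+\stepoffset)^{-\exponent-1}$, so after cancelling the common factor $(\run+\stepoffset)^{-\exponent-\exponentalt}$ the whole step collapses to the single scalar condition
\[
K\exponent\,(\run+\stepoffset)^{\exponentalt-1}\;\le\;\consc K-\conscalt .
\]
Because $\exponentalt<1$, the exponent $\exponentalt-1$ is negative and the left-hand side tends to $0$, so this condition holds for all large $\run$ as soon as $\consc K>\conscalt$.

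The one genuine difficulty — and the step I expect to be the main obstacle — is the circular dependence between $K$ and the threshold $\run_{0}$: the scalar condition is easy for large $\run$, but the base case $\seqitem_{\run_{0}}\le w_{\run_{0}}$ forces $K$ to be large, which a priori could shift the threshold. I would break this loop by fixing $\run_{0}$ through a $K$-free margin requirement, namely $\exponent(\run+\stepoffset)^{\exponentalt-1}\le\consc/2$ together with $(\run+\stepoffset)^{\exponentalt}\ge\consc$ for all $\run\ge\run_{0}$ (both achievable since $\exponentalt\in(0,1)$). Rewriting the scalar condition as $K\bigl(\consc-\exponent(\run+\stepoffset)^{\exponentalt-1}\bigr)\ge\conscalt$, its bracket is then at least $\consc/2>0$, so the condition holds for every $K\ge 2\conscalt/\consc$ and, crucially, remains valid when $K$ is further enlarged. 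It therefore suffices to take $K:=\max\{\,2\conscalt/\consc,\ \seqitem_{\run_{0}}(\run_{0}+\stepoffset)^{\exponent}\,\}$, which simultaneously secures the base case and the induction step. The induction then yields $\current[\seqitem]\le K/(\run+\stepoffset)^{\exponent}$ for all $\run\ge\run_{0}$, hence the claimed $\bigoh(1/\run^{\exponent})$ rate.
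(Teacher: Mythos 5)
Your proof is correct. Note that the paper itself does not prove this statement: it is quoted verbatim as Lemma~4 of \citet{Chu54}, so there is no internal proof to compare against; what you have supplied is the missing self-contained argument, and it is essentially the classical comparison-and-induction proof (Chung's own) against the test sequence $K/(\run+\stepoffset)^{\exponent}$. The two points that genuinely require care are both handled properly: (i) you insert the induction hypothesis into the recursion only after ensuring that the coefficient $1-\consc/(\run+\stepoffset)^{\exponentalt}$ is nonnegative, and (ii) you break the circular dependence between $K$ and $\run_{0}$ by fixing $\run_{0}$ through the $K$-free margin condition $\exponent(\run+\stepoffset)^{\exponentalt-1}\le\consc/2$, which makes the induction-step condition $K\bigl(\consc-\exponent(\run+\stepoffset)^{\exponentalt-1}\bigr)\ge\conscalt$ hold for every $K\ge 2\conscalt/\consc$ and remain valid when $K$ is enlarged to absorb the base case. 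This is precisely where $\exponentalt<1$ enters, and it also explains why the conclusion here needs no upper restriction on $\exponent$, in contrast to \cref{lem:chung1954} (the case $\exponentalt=1$), where the analogous bracket is $\consc-\exponent(\run+\stepoffset)^{-1}(\text{up to lower-order terms})$ scaled by $(\run+\stepoffset)^{-1}$ and one must assume $\consc>\exponent$. One cosmetic remark: since the hypothesis only bounds the sequence from above, your argument yields the one-sided estimate $\current[\seqitem]\le K/(\run+\stepoffset)^{\exponent}$ for $\run\ge\run_{0}$; this is all that a one-sided recursion can give for signed sequences, and it is exactly what the paper uses, since in all of its applications the sequence consists of nonnegative quantities (expected squared distances), for which the stated $\bigoh(1/\run^{\exponent})$ conclusion follows.
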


To establish almost sure convergence of the iterates, we rely on the Robbins\textendash Siegmund theorem which apply to non-negative almost-supermatingales.

\vspace{0.5em}
\begin{lemma}[\citet{RS71}]
\label{lem:Robbins-Siegmund}
Consider a filtration $\seqinf{\filter}{\run}$ and four non-negative $\seqinf{\filter}{\run}$-adapted
processes $\seqinf{U}{\run}$, $\seqinf{\lambda}{\run}$, $\seqinf{\chi}{\run}$, $\seqinf{\zeta}{\run}$
such that $\sum_\run\current[\lambda] < \infty$ and $\sum_\run\current[\chi] < \infty$ with probability one and
$\forall \run\in\N$,
\begin{equation}
    \label{eq:Robbins-Siegmund}
    \exof{\update[U]\given{\current[\filter]}}
    \le (1+\current[\lambda])\current[U] + \current[\chi] - \current[\zeta].
\end{equation}
Then $\seqinf{U}{\run}$ converges almost surely to a random variable $U_\infty$ and
$\sum_\run\current[\zeta]<\infty$ almost surely.
\end{lemma}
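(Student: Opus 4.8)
\textbf{Proof strategy for \Cref{lem:Robbins-Siegmund} (Robbins--Siegmund).} The plan is to reduce this almost-supermartingale estimate to a genuine nonnegative supermartingale, to which the classical supermartingale convergence theorem applies, and then to handle the fact that the summability hypotheses hold only almost surely by a localization (stopping-time) argument. The whole proof is purely probabilistic and does not use any structure of \eqref{eq:DSEG}; the reduction proceeds in three steps: discounting, forming a supermartingale, and localizing.

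First I would eliminate the multiplicative factor $(1+\current[\lambda])$ by discounting. Set $c_1 = 1$ and $c_{n+1} = c_n/(1+\current[\lambda])$, so that $c_n = \prod_{k<n}(1+\lambda_k)^{-1}$ is a positive, nonincreasing process that is measurable with respect to the history up to time $n-1$. Because $\sum_\run \current[\lambda] < \infty$ almost surely, the infinite product converges to a strictly positive limit $c_\infty \in (0,1]$, so the sequence $c_n$ stays in $(c_\infty,1]$. Since $c_{n+1}$ is $\current[\filter]$-measurable, multiplying the hypothesis \eqref{eq:Robbins-Siegmund} by $c_{n+1}$ and using $c_{n+1}(1+\current[\lambda]) = c_n$, the rescaled process $W_n := c_n \current[U]$ satisfies $\exof{W_{n+1}\given\current[\filter]} \le W_n + c_{n+1}\current[\chi] - c_{n+1}\current[\zeta]$. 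Writing $\chi_n' := c_{n+1}\current[\chi]$ and $\zeta_n' := c_{n+1}\current[\zeta]$, the fact that $0 < c_{n+1} \le 1$ gives $\sum_n \chi_n' \le \sum_\run\current[\chi] < \infty$ almost surely, and I have reduced matters to the special case $\lambda \equiv 0$: a nonnegative adapted $W_n$ with $\exof{W_{n+1}\given\current[\filter]} \le W_n + \chi_n' - \zeta_n'$ and $\sum_n \chi_n' < \infty$ a.s.

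Next I would introduce the candidate supermartingale $M_n := W_n - \sum_{k<n}(\chi_k' - \zeta_k')$. A one-line computation using the reduced recursion (the partial sum is measurable with respect to the history up to time $n$) shows $\exof{M_{n+1}\given\current[\filter]} \le M_n$, so $M_n$ is a supermartingale. \emph{Here lies the main obstacle:} the summability of $\chi'$ holds only almost surely, not in expectation, so $M_n$ need not be integrable and the convergence theorem cannot be invoked directly. I would resolve this by localization: for each integer $a$ define $\tau_a := \inf\{n : \sum_{k\le n}\chi_k' > a\}$, which is a genuine stopping time because the partial sums of $\chi'$ are adapted and nondecreasing. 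On $\{n \le \tau_a\}$ the partial sums $\sum_{k<n}\chi_k'$ stay $\le a$, and since $W_n \ge 0$ and $\sum_{k<n}\zeta_k' \ge 0$ this forces $M_{n\wedge\tau_a} \ge -a$; hence $M_{n\wedge\tau_a} + a$ is a nonnegative supermartingale and converges almost surely. Because $\sum_n \chi_n' < \infty$ a.s., almost every sample path satisfies $\tau_a = \infty$ for some $a$, so letting $a \to \infty$ shows that $M_n$ converges almost surely on the whole probability space.

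Finally I would unwind the construction. Convergence of $M_n$ together with the a.s. convergence of the partial sums $\sum_{k<n}\chi_k'$ shows that $W_n + \sum_{k<n}\zeta_k'$ converges almost surely; since $\sum_{k<n}\zeta_k'$ is nondecreasing and $W_n \ge 0$, a finite limit of the sum forces $\sum_n \zeta_n' < \infty$ a.s., whence $W_n$ converges as the difference of two convergent sequences. Returning to the original quantities, $\current[U] = W_n / c_n$ converges almost surely to $U_\infty := \lim_n W_n / c_\infty$ because $c_n \to c_\infty > 0$; and since $c_{n+1} \ge c_\infty$ for all $n$, we get $\sum_\run \current[\zeta] \le c_\infty^{-1}\sum_n \zeta_n' < \infty$ almost surely. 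This yields exactly the two asserted conclusions. Every step except the localization is routine algebra with the discount factors $c_n$; the localization is the one delicate point, and it is precisely what makes the almost-sure (rather than $L^1$) hypotheses sufficient.
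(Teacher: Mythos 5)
The paper does not actually prove this lemma---it is quoted directly from \citet{RS71}---and your argument is correct and is precisely the classical proof from that reference: discounting by $c_n=\prod_{k<n}(1+\lambda_k)^{-1}$ (which is predictable and, on $\{\sum_n\lambda_n<\infty\}$, bounded below by a strictly positive $c_\infty$) to reduce to $\lambda\equiv 0$, compensating to form the supermartingale $M_n$, and localizing with the stopping times $\tau_a$ exactly so that the almost-sure (rather than $L^1$) summability of $\chi$ suffices, before unwinding to recover convergence of $U_n$ and $\sum_n\zeta_n<\infty$. The one technical point left implicit is that, absent an integrability assumption on $U_1$, the stopped process $M_{n\wedge\tau_a}+a$ need not be integrable and the supermartingale property must be read in the extended sense for nonnegative processes (positive supermartingales in the sense of Neveu, whose a.s.\ convergence theorem does not require integrability); with that standard reading every step of your proof goes through.
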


\section{Proofs for global convergence results}
\label{app:DssEG-proofs}

We then start with the proofs of the global results to highlight the effect of double stepsize, before tackling the more challenging local convergence analysis.

\subsection{Proof of \cref{prop:non-conv}: failure of stochastic extragradient}
\label{app:DssEG-non-conv-proof}

\PropNonConv*

\begin{proof}
We write the updates of the algorithm
\begin{equation}
    \left\{
        \begin{array}{l}
             \inter[\minvar] = \current[\minvar]-\current[\step]\current[\maxvar]-\current[\step]\current[\snoise] \\
             \inter[\maxvar] = \current[\maxvar]+\current[\step]\current[\minvar]
        \end{array}
    \right.
    ~~~~~~~~
    \left\{
        \begin{array}{l}
             \update[\minvar] = \current[\minvar]-\current[\step]\current[\maxvar]
                                -\current[\step]^2\current[\minvar]-\current[\step]\inter[\snoise] \\
             \update[\maxvar] = \current[\maxvar] + \current[\step]\current[\minvar]
                                -\current[\step]^2\current[\maxvar]-\current[\step]^2\current[\snoise]
        \end{array}
    \right. \notag
\end{equation}
Therefore
\begin{align}
    \update[\minvar]^2 + \update[\maxvar]^2
    &= (1-\current[\step]^2+\current[\step]^4)(\current[\minvar]^2+\current[\maxvar]^2)
        +\current[\step]^2\inter[\snoise]^2
        +\current[\step]^4\current[\snoise]^2 \notag\\
        &~~-2\current[\step]\inter[\snoise]((1-\current[\step]^2)\current[\minvar]-\current[\step]\current[\maxvar])
        -2\current[\step]^2\current[\snoise]((1-\current[\step]^2)\current[\maxvar]+\current[\step]\current[\minvar]). \notag
\end{align}
Taking expectation leads to
\begin{equation}
    \ex[\update[\minvar]^2 + \update[\maxvar]^2]
    = (1-\current[\step]^2+\current[\step]^4)\ex[\current[\minvar]^2+\current[\maxvar]^2]
        + (\current[\step]^2+\current[\step]^4)\noisevar. \notag
\end{equation}
For sake of simplicity, let us denote $\current[\seqitem]=\ex[\current[\minvar]^2+\current[\maxvar]^2]$.
We consider two scenarios:

\vspace{0.4em}
\emph{Case 1: $\current[\gamma]^2\ge1$.}\quad
We have $1-\current[\step]^2+\current[\step]^4\ge1$ and consequently $\update[\seqitem]\ge\current[\seqitem]$.

\emph{Case 2: $\current[\gamma]^2<1$.}\quad
Notice that 
\begin{equation}
    \update[\seqitem]-\frac{(1+\current[\step]^2)\noisevar}{1-\current[\step]^2}
    = (1-\current[\step]^2+\current[\step]^4)\left(\current[\seqitem]-\frac{(1+\current[\step]^2)\noisevar}{1-\current[\step]^2}\right). \notag
\end{equation}
We then set $\current[\distval]=(1+\current[\step]^2)/(1-\current[\step]^2)$.
Since $1-\current[\step]^2+\current[\step]^4<1$, $\update[\seqitem]$ gets closer to
$\current[\distval]\noisevar$ than $\current[\seqitem]$.
In particular, if $\current[\seqitem]<\current[\distval]\noisevar$,
we have $\current[\seqitem]<\update[\seqitem]<\current[\distval]\noisevar$;
otherwise, $\current[\seqitem]\ge\update[\seqitem]\ge\current[\distval]\noisevar$.
As $\current[\distval]\ge1$, the above implies
$\update[\seqitem] \ge \min(\current[\seqitem], \current[\distval]\noisevar) \ge \min(\current[\seqitem], \noisevar)$.

\vspace{0.4em}
To conclude, in the two cases we have $\update[\seqitem] \ge \min(\current[\seqitem], \noisevar)$,
showing $\liminf_{\run\to\infty} \exof{\current[\minvar]^{2} + \current[\maxvar]^{2}} > 0$.



\paragraph*{A remedy with double stepsize extragradient.}

With different stepsizes, the updates of the algorithm write
\begin{equation}
    \left\{
        \begin{array}{l}
             \inter[\minvar] = \current[\minvar]-\current[\step]\current[\maxvar]-\current[\step]\current[\snoise] \\
             \inter[\maxvar] = \current[\maxvar]+\current[\step]\current[\minvar]
        \end{array}
    \right.
    ~~~~~~~~
    \left\{
        \begin{array}{l}
             \update[\minvar] = \current[\minvar]-\current[\stepalt]\current[\maxvar]
                        -\current[\step]\current[\stepalt]\current[\minvar]-\current[\stepalt]\inter[\snoise] \\
             \update[\maxvar] = \current[\maxvar] + \current[\stepalt]\current[\minvar]
                                -\current[\step]\current[\stepalt]\current[\maxvar]-\current[\step]\current[\stepalt]\current[\snoise]
        \end{array}
    \right. \notag
\end{equation}

This now leads to
\begin{align}
    \ex[\update[\minvar]^2 + \update[\maxvar]^2]
    &= ( (1-\current[\step]\current[\stepalt])^2+\current[\stepalt]^2)\ex[\current[\minvar]^2+\current[\maxvar]^2]
        + (\current[\stepalt]^2+\current[\step]^2\current[\stepalt]^2)\noisevar \notag \\
    &= ( 1-2\current[\step]\current[\stepalt] + \current[\stepalt]^2+\current[\step]^2\current[\stepalt]^2)\ex[\current[\minvar]^2+\current[\maxvar]^2]
        + (\current[\stepalt]^2+\current[\step]^2\current[\stepalt]^2)\noisevar . \notag
\end{align}

Taking $\current[\step] = \frac{1}{\run^{\pexp}}$ and $\current[\stepalt] = \frac{1}{\run^{\qexp}}$, we get
\begin{align}
    \ex[\update[\minvar]^2 + \update[\maxvar]^2]
       &= \left( 1- \frac{2}{\run^{(\pexp+\qexp)}} + \frac{1}{{\run}^{2\qexp}}+  \frac{1}{\run^{2(\pexp+\qexp)}}\right)
       \ex[\current[\minvar]^2+\current[\maxvar]^2]
        + \left( \frac{1}{{\run}^{2\qexp}}+  \frac{1}{\run^{2(\pexp+\qexp)}}\right) \noisevar \notag \\
        &\leq \left( 1- \frac{1.5}{\run^{(\pexp+\qexp)}} \right)\ex[\current[\minvar]^2+\current[\maxvar]^2]
        + \frac{2\noisevar}{{\run}^{2\qexp}} \notag \\
        &= \mathcal{O} \left( \frac{1}{\run^{(\qexp-\pexp)}} \right) \notag
\end{align}
where the inequality comes from 
$1- 2/\run^{(\pexp+\qexp)} + 1/{\run}^{2\qexp}+  1/\run^{2(\pexp+\qexp)} \le 1- 1.5/\run^{(\pexp+\qexp)}$
for large enough $\run$ and the last part is an application of either \cref{lem:chung1954} or \cref{lem:chung1954-bis} with $\consc=1.5>\exponent=\qexp-\pexp>0$ (starting at large enough $\run$). 

Hence, $\ex[\current[\minvar]^2 + \current[\maxvar]^2] \to 0$, i.e. we can find a double stepsize choice, with an aggressive extrapolation step and a conservative update step  ($\pexp<\qexp$) such that $(\current[\minvar],\current[\maxvar])\to (0,0) $ in mean squared error.\\
\end{proof}

\subsection{Proof of \cref{lem:dsseg-descent-stoch}}
\LemmaDescent*

\begin{proof}
Let us denote by $\exoft[\cdot] = \exof{\cdot\given{\current[\filter]}}$ the conditional expectation with respect to the filtration up to time $\run$
and $\interdet = \current-\current[\step]\vecfield(\current)$
the leading state that is generated with deterministic update so that
$\inter = \interdet - \current[\step]\current[\noise]$.
We develop
\begin{align}
    \norm{\update-\sol}^2
        & = \norm{\current-\current[\stepalt]\inter[\svecfield]-\sol}^2 \notag \\
        & = \norm{\current-\sol}^2 - 2\current[\stepalt]\product{\inter[\svecfield]}{\current-\sol}
            + \current[\stepalt]^2\norm{\inter[\svecfield]}^2 \notag \\
        & = \norm{\current-\sol}^2 - 2\current[\stepalt]\product{\inter[\svecfield]}{\interdet-\sol}
            - 2\current[\step]\current[\stepalt]\product{\inter[\svecfield]}{\vecfield(\current)}
            + \current[\stepalt]^2\norm{\inter[\svecfield]}^2.
\label{eq:basic}
\end{align}

We would then like to bound the different terms appearing on the \ac{RHS} of the equality.
With the zero-mean assumption \eqref{eq:mean}, conditioning on $\current[\filter]$ leads to
\begin{align}
\exoft[\product{\inter[\svecfield]}{\interdet-\sol}]
&= \exoft[\product{\vecfield(\inter)}{\interdet-\sol}] \notag \\
&= \exoft[\product{\vecfield(\inter)}{\interdet-\current[\step]\current[\noise]-\sol}]
    + \exoft[\product{\vecfield(\inter)}{\current[\step]\current[\noise]}] \notag \\
&= \exoft[\product{\vecfield(\inter)}{\inter-\sol}]
    + \current[\step]\exoft[\product{\vecfield(\inter)-\vecfield(\interdet)}{\current[\noise]}],
\label{eq:eta-term}
\end{align}
where in the last line we use the fact that $\vecfield(\interdet)$ is $\current[\filter]$-measurable so
\[
    \exoft[\product{\vecfield(\interdet)}{\current[\noise]}]
    =\product{\vecfield(\interdet)}{\exoft[\current[\noise]]}=0.
\]
By Lipschitz continuity of $\vecfield$
\begin{align}
    -\product{\vecfield(\inter)-\vecfield(\interdet)}{\current[\noise]}
    \le \norm{\vecfield(\inter)-\vecfield(\interdet)}\norm{\current[\noise]}
    \le \current[\step]\lips\norm{\current[\noise]}^2.
\label{eq:gamma2-eta-term}
\end{align}
On the other hand, $\exoft[\product{\inter[\svecfield]}{\vecfield(\current)}] = \exoft[\product{\vecfield(\inter)}{\vecfield(\current)}]$
and $\exoft[\norm{\inter[\svecfield]}^2] = \exoft[\norm{\vecfield(\inter)}^2] + \exoft[\norm{\inter[\noise]}^2]$.
By $\current[\stepalt]\le\current[\step]$,
Lipschitz continuity of $\vecfield$ and $\current-\inter=\current[\step]\current[\svecfield]$, we get
\begin{align}
& -2\current[\step]\current[\stepalt]\product{\vecfield(\inter)}{\vecfield(\current)}
+ \current[\stepalt]^2\norm{\vecfield(\inter)}^2 \notag \\
& ~~\le
-2\current[\step]\current[\stepalt]\product{\vecfield(\inter)}{\vecfield(\current)}
+ \current[\step]\current[\stepalt]\norm{\vecfield(\inter)}^2 \notag \\
& ~~=
\current[\step]\current[\stepalt]
(\norm{\vecfield(\current)-\vecfield(\inter)}^2-\norm{\vecfield(\current)}^2) \notag \\
& ~~\le
\current[\step]^3\current[\stepalt]\lips^2\norm{\current[\svecfield]}^2
-\current[\step]\current[\stepalt]\norm{\vecfield(\current)}^2,
\label{eq:gamma-eta-term}
\end{align}
Similar to before we may write
$\exoft[\norm{\current[\svecfield]}^2] = \exoft[\norm{\vecfield(\current)}^2] + \exoft[\norm{\current[\noise]}^2]$.
Therefore, combining \eqref{eq:basic}, \eqref{eq:eta-term}, \eqref{eq:gamma2-eta-term}, \eqref{eq:gamma-eta-term}, we deduce the following
%
\begin{align}
    \exoft[\norm{\update-\sol}^2]
        & \le \norm{\current-\sol}^2
        - 2\current[\stepalt]\exoft[\product{\vecfield(\inter)}{\inter-\sol}]  
        - (\current[\step]\current[\stepalt]-\current[\step]^3\current[\stepalt]\lips^2)
        \norm{\vecfield(\current)}^2\notag\\
        & ~~ + (2\current[\step]^2\current[\stepalt]\lips
            + \current[\step]^3\current[\stepalt]\lips^2)
            \ex[\norm{\current[\noise]}^2]
        + \current[\stepalt]^2 \ex[\norm{\inter[\noise]}^2]. 
    \label{eq:dseg-descent-secondlast}
\end{align}

To finish the proof, we would like to bound the noise terms. Using \eqref{eq:variance} and Jensen's inequality (recall that $\probmoment\ge2$), we have
\begin{equation}
    \label{eq:current-noise-bound}
    \ex[\norm{\current[\noise]}^2] \le (\noisedev+\varcontrol\norm{\current-\sol})^2
    \le 2\noisevar+2\varcontrol^2\norm{\current-\sol}^2.
\end{equation}

Similarly,
\begin{equation}
    \label{eq:inter-noise-bound}
    \begin{aligned}
    \ex[\norm{\inter[\noise]}^2]
    & \le 2\noisevar+2\varcontrol^2\norm{\inter-\sol}^2 \\
    & \le 2\noisevar + 
        4\varcontrol^2\norm{\inter-\current}^2
        +4\varcontrol^2\norm{\current-\sol}^2 \\
    & \le 4\current[\step]^2\varcontrol^2\norm{\current[\svecfield]}^2
        +4\varcontrol^2\norm{\current-\sol}^2 + 2\noisevar\\
    & \le 8\current[\step]^2\varcontrol^2\norm{\vecfield(\current)}^2
        + 16\current[\step]^2\varcontrol^2\noisevar
        + 16\current[\step]^2\varcontrol^4\norm{\current-\sol}^2
        + 4\varcontrol^2\norm{\current-\sol}^2 + 2\noisevar
    \end{aligned}
\end{equation}
Substituting \eqref{eq:inter-noise-bound} and \eqref{eq:current-noise-bound} in \eqref{eq:dseg-descent-secondlast}, we obtain
\begin{align}
    \exoft[\norm{\update-\sol}^2]
        & \le (1 + 4\current[\step]^2\current[\stepalt]\lips\varcontrol
        + 2\current[\step]^3\current[\stepalt]\lips^2\varcontrol
        + 4\current[\stepalt]^2\varcontrol^2
        + 16\current[\step]^2\current[\stepalt]^2\varcontrol^4
        )\norm{\current-\sol}^2 \notag\\
        & ~~
        - 2\current[\stepalt]\exoft[\product{\vecfield(\inter)}{\inter-\sol}]\notag\\
        & ~~
        - (\current[\step]\current[\stepalt]
        -\current[\step]^3\current[\stepalt]\lips^2
        - 8\current[\step]^2\current[\stepalt]^2\varcontrol^2)
        \norm{\vecfield(\current)}^2\notag\\
        & ~~ + (4\current[\step]^2\current[\stepalt]\lips
            + 2\current[\step]^3\current[\stepalt]\lips^2
            + 2\current[\stepalt]^2
            + 16\current[\step]^2\current[\stepalt]^2\varcontrol^2)\noisevar. \notag
\end{align}
We recover \eqref{eq:dsseg-descent-stoch} by using $2\current[\stepalt]^2\noisevar\le4\current[\stepalt]^2\noisevar$.
\end{proof}

\subsection{Proof of \cref{thm:dsseg-general-as}}
\label{app-sub:proof-as}

\ThmGlobalAs*

\begin{proof}
The proof is divided into three key steps.

\vspace{0.3em}
(1) \textit{With probability $1$, $\liminf_{\run\rightarrow\infty} \norm{\vecfield(\current)} = 0$}.\quad
Let $\sol\in\sols$. Using \cref{lem:dsseg-descent-stoch} and \cref{asm:mono}, we get the following
\begin{equation}
\notag
\begin{aligned}
    \exof{\norm{\update - \sol}^{2} \given \current[\filter]}
	&
	\leq (1+\current[\Cons]\varcontrol^2)\norm{\current-\sol}^{2}
	- 2\current[\stepalt] \exof{\product{\vecfield(\inter)}{\inter-\sol} \given \current[\filter]}
	\\
	&
	- \current[\step] \current[\stepalt] (
	    1-\current[\step]^{2}\lips^{2}
	    -8\current[\step]\current[\stepalt]\varcontrol^2)
		\norm{\vecfield(\current)}^{2}
	+ \current[\Cons]\noisevar,
	\\
	&
	\leq (1+\current[\Cons]\varcontrol^2)\norm{\current-\sol}^{2}
	- \current[\step] \current[\stepalt] (
	    1-\current[\step]^{2}\lips^{2}
	    -8\current[\step]\current[\stepalt]\varcontrol^2)
		\norm{\vecfield(\current)}^{2}
	+ \current[\Cons]\noisevar
    \label{eq:dsseg-descent-stoch-app}
\end{aligned}
\end{equation}

Since $\current[\step] < 1/3\max(\smooth, \varcontrol)$ and $\current[\stepalt]\le\current[\step]$, the coefficient
$\current[\consalt]\defeq
\current[\step]\current[\stepalt]
-\current[\step]^3\current[\stepalt]\lips^2
-8\current[\step]^2\current[\stepalt]^2\varcontrol^2$
is non-negative.
Recalling that 
$\current[\Cons]=
    4\current[\step]^2\current[\stepalt]\lips   
    + 2\current[\step]^3\current[\stepalt]\lips^2
    + 4\current[\stepalt]^2
    + 16\current[\step]^2\current[\stepalt]^2\varcontrol^2$,
from our stepsize conditions $\sum_{\run}\current[\stepalt]^2<\infty$, $\sum_{\run}\current[\step]^2\current[\stepalt]<\infty$
and $\seqinf{\step}{\run}$ being upper-bounded, it holds $\sum_{\run}\current[\Cons]<\infty$.
We can therefore apply the Robbins\textendash Siegmund theorem (\cref{lem:Robbins-Siegmund})
to get that (\emph{i}) $\norm{\current-\sol}$ converges
almost surely and (\emph{ii})
$\sum_\run \current[\consalt]\norm{\vecfield(\current)}^2 < \infty$ almost surely.
As the stepsize conditions also imply $\sum_{\run}\current[\consalt]=\infty$, using (\emph{ii}), we deduce immediately 
$\liminf_{\run\rightarrow\infty} \norm{\vecfield(\current)} = 0$ almost surely.

\vspace{0.3em}
(2) \textit{With probability $1$, $\norm{\current-\sol}$ converges for all $\sol\in\sols$}.\quad
In other words, we would like to prove the existence of an event $\singleevent\subset\samples$ satisfying $\prob(\singleevent)=1$ and that for every realization of the event and every $\sol\in\sols$, $\norm{\current-\sol}$ converges.
Since $\vecspace$ is a separable metric space, $\sols$ is also separable and we can find a countable set $\countable$ such that $\sols=\cl(\countable)$ ($\sols$ is closed by continuity of $\vecfield$).
We claim that the choice $\singleevent=\{\norm{\current-\cpoint}\textit{ converges for all }\cpoint\in\countable\}$ is the good candidate.

In effect, taking an arbitrary $\cpoint$ from $\countable$, from \emph{(i)} we know that 
\[\prob(\{\norm{\current-\cpoint}\textit{ converges}\})=1.\]
Therefore from the countability of $\countable$ we have $\prob(\singleevent)=1$.
We now fix $\sol\in\sols$. As $\countable$ is dense in $\sols$, there exists a sequence $\seqinf{\cpoint}{\indg}$ of points in $\countable$ such that $\lim_{\indg\to\infty}\cpoint_\indg=\sol$.
Consider a realization of $\singleevent$, for every $\cpoint_\indg$ we have $\lim_{\run\to\infty}\norm{\current-\cpoint_\indg}=\distval_\indg$ for some $\distval_\indg \ge 0$.
The triangular inequality gives
\begin{equation}
    -\norm{\cpoint_\indg-\sol}
    \le
    \norm{\current-\sol}-\norm{\current-\cpoint_\indg}
    \le
    \norm{\cpoint_\indg-\sol} \notag
\end{equation}
for all $\indg,\run\in\N$. Consequently, for all $\indg\in\N$,
\begin{align}
    -\norm{\cpoint_\indg-\sol}
    &\le
    \liminf_{\run\to\infty}\norm{\current-\sol}
    -\lim_{\run\to\infty}\norm{\current-\cpoint_\indg} \notag \\
    &=
    \liminf_{\run\to\infty}\norm{\current-\sol} - \distval_\indg \notag \\
    &\le
    \limsup_{\run\to\infty}\norm{\current-\sol} - \distval_\indg \notag \\
    &=
    \limsup_{\run\to\infty}\norm{\current-\sol}
    -\lim_{\run\to\infty}\norm{\current-\cpoint_\indg}
    \le \norm{\cpoint_\indg-\sol}. \notag
\end{align}
Taking the limit as $\indg\to\infty$ we obtain the convergence of $\seqinf{\norm{\current-\sol}}{\run}$; more precisely, $\lim_{\run\to\infty}\norm{\current-\sol}=\lim_{\indg\to\infty}\distval_\indg$.
We have thus proved $\singleevent$ satisfies the requirements.

\vspace{0.3em}
(3) \textit{Conclude}.\quad
Combining the points (1) and (2), we get
\[
\prob(\singleevent\intersect\{\liminf_{\run\rightarrow\infty} \norm{\vecfield(\current)} = 0\})=1.
\]
Let us take a realization of this event.
It holds $\liminf_{\run\to\infty}\norm{\vecfield(\current)}=0$ and we can thus extract a subsequence
$(\state_{\extr})_{\run\in\N}$ such that $\lim_{\run\to\infty}\norm{\vecfield(\state_{\extr})}=0$.
Let $\sol\in\sols$, we know that $\norm{\current-\sol}$ converges, implying that $\seqinf{\state}{\run}$ is bounded.
As $\vecspace$ is finite dimensional, we can then further extract
$(\state_{\extrsnd})_{\run\in\N}$ so that $\lim_{\run\to\infty}\state_{\extrsnd} = \point_\infty$ for some $\point_\infty\in\vecspace$.
By continuity of $\vecfield$, we have $\vecfield(\point_\infty)=0$, \ie $\point_\infty\in\sols$.
By the choice of $\singleevent$, we have the convergence of $\seqinf{\norm{\current-\point_\infty}}{\run}$,
and
\begin{equation}
    \lim_{\run\to\infty}\norm{\current-\point_\infty} = \lim_{\run\to\infty}\norm{\state_{\extrsnd}-\point_\infty} = \norm{\point_\infty-\point_\infty} = 0. \notag
\end{equation}
To conclude, we have proved that that $\current$ converges to some $\sol\in\sols$ almost surely.
\end{proof}

\subsection{Proof of \cref{thm:dsseg-general-rate}}
{
\renewcommand{\thetheorem}{\ref{thm:dsseg-general-rate}}%
\begin{theorem}
Suppose that \cref{asm:Lipschitz,asm:noise,asm:mono,asm:err-bound} hold and assume 
that $\current[\step]\le\cons/\lips$ with $\cons<1$.
Then:
\begin{enumerate}
\item
If \eqref{eq:DSEG} is run with $\current[\step] \equiv \step$, $\current[\stepalt] \equiv \stepalt$, we have:
\begin{equation}
\exof{\dist(\current, \sols)^2}
	\leq (1-\minuscst)^{\run-1}\dist(\state_\start, \sols)^2
		+ \frac{\pluscst}{\minuscst} \notag
\end{equation}
with constants $\pluscst=(2\step^2\stepalt\lips+\step^3\stepalt\lips^2+\stepalt^2)\noisevar$ and $\minuscst=\step\stepalt\errbcst^2(1-\cons^2)$.
\item
If \eqref{eq:DSEG} is run with 
$\current[\step] = \step/(\run+\stepoffset)^{1-\exponentalt}$ and $\current[\stepalt] = \stepalt/(\run+\stepoffset)^{\exponentalt}$ for some $\exponentalt\in(1/2,1)$, we have:
\begin{equation}
\exof{\dist(\current, \sols)^{2}}
	\leq
	\frac{\pluscst}{\minuscst-\exponent} \frac{1}{\run^\exponent}
	+ \smalloh\left(\frac{1}{\run^\exponent}\right) \notag
\end{equation}
where $\exponent = \min(1-\exponentalt, 2\exponentalt-1)$ and we further assume that $\step\stepalt\errbcst^2(1-\cons^2) > \exponent$.
In particular, the optimal rate is attained when $\exponentalt=2/3$, which gives $\ex[\dist(\current, \sols)^{2}]=\bigoh(1/\run^{1/3})$.
\end{enumerate}
For the sake of readability, the involved constants are stated for the case $\varcontrol=0$. 
On the other hand, if $\noisedev=0$ and $\varcontrol\geq 0$, a geometric convergence can be proved.
\end{theorem}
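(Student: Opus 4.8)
The plan is to reduce both parts to the scalar recursions handled by \cref{lem:cst-noise-descent,lem:chung1954}, with \cref{lem:dsseg-descent-stoch} as the engine. First I specialize the descent lemma to the stated case $\varcontrol=0$: then the multiplicative factor $1+\current[\Cons]\varcontrol^2$ collapses to $1$, and the noise can be bounded \emph{directly} by $\ex[\dnorm{\current[\noise]}^2]\le\noisevar$ and $\ex[\dnorm{\inter[\noise]}^2]\le\noisevar$ (without the factor-two slack of \eqref{eq:inter-noise-bound}), so that \eqref{eq:dsseg-descent-stoch} reduces to
\begin{equation}
\exof{\norm{\update-\sol}^2\given\current[\filter]}
\le \norm{\current-\sol}^2
- 2\current[\stepalt]\exof{\product{\vecfield(\inter)}{\inter-\sol}\given\current[\filter]}
- \current[\step]\current[\stepalt](1-\current[\step]^2\lips^2)\norm{\vecfield(\current)}^2
+ B_\run, \notag
\end{equation}
with $B_\run=(2\current[\step]^2\current[\stepalt]\lips+\current[\step]^3\current[\stepalt]\lips^2+\current[\stepalt]^2)\noisevar$. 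By \cref{asm:mono} the inner-product term is nonnegative and may be dropped. I then choose $\sol$ to be a nearest point of $\sols$ to $\current$; this $\sol$ is $\current[\filter]$-measurable (a function of $\current$), so the conditional bound still applies verbatim, and it turns $\norm{\current-\sol}^2$ into $\dist(\current,\sols)^2$, while $\dist(\update,\sols)^2\le\norm{\update-\sol}^2$ controls the left-hand side.

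Next I invoke the error bound \cref{asm:err-bound} in the form $\norm{\vecfield(\current)}^2\ge\errbcst^2\dist(\current,\sols)^2$. Since $\current[\step]\le\cons/\lips$ gives $1-\current[\step]^2\lips^2\ge1-\cons^2>0$, taking total expectations yields the master recursion
\begin{equation}
\exof{\dist(\update,\sols)^2}
\le (1-A_\run)\,\exof{\dist(\current,\sols)^2}+B_\run,
\qquad A_\run=\current[\step]\current[\stepalt]\errbcst^2(1-\cons^2). \notag
\end{equation}
Setting $\current[\seqitem]=\exof{\dist(\current,\sols)^2}$, this is exactly the scalar inequality feeding the cited numerical lemmas.

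For Part~\ref{thm:dsseg-general-rate-a}, constant stepsizes give $A_\run\equiv\minuscst=\step\stepalt\errbcst^2(1-\cons^2)$ and $B_\run\equiv\pluscst=(2\step^2\stepalt\lips+\step^3\stepalt\lips^2+\stepalt^2)\noisevar$, and \cref{lem:cst-noise-descent} delivers $\current[\seqitem]\le(1-\minuscst)^{\run-1}\dist(\state_\start,\sols)^2+\pluscst/\minuscst$. For Part~\ref{thm:dsseg-general-rate-b}, the stepsizes give $\current[\step]\current[\stepalt]=\step\stepalt/(\run+\stepoffset)$, hence $A_\run=\minuscst/(\run+\stepoffset)$. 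The one genuinely delicate step is the noise term: its three pieces decay as $(\run+\stepoffset)^{-(2-\exponentalt)}$, $(\run+\stepoffset)^{-(3-2\exponentalt)}$ and $(\run+\stepoffset)^{-2\exponentalt}$, and I must check that every exponent is at least $\exponent+1=\min(2-\exponentalt,2\exponentalt)$ with $\exponent=\min(1-\exponentalt,2\exponentalt-1)$ (the middle one being never dominant since $3-2\exponentalt\ge2-\exponentalt$ for $\exponentalt\le1$). This gives $B_\run\le\pluscst/(\run+\stepoffset)^{\exponent+1}$, the assumption $\minuscst>\exponent$ is precisely the hypothesis of \cref{lem:chung1954}, and that lemma yields $\current[\seqitem]\le\frac{\pluscst}{\minuscst-\exponent}\,\run^{-\exponent}+\smalloh(\run^{-\exponent})$; maximizing $\exponent$ over $\exponentalt\in(1/2,1)$ picks $\exponentalt=2/3$ and the $\bigoh(1/\run^{1/3})$ rate.

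I expect the main obstacle to be exactly this exponent accounting — verifying that the slowest-decaying noise contribution lands at $\exponent+1$ and that $\minuscst>\exponent$ can be secured by the free constants $\step,\stepalt$ while still respecting $\current[\step]\le\cons/\lips$. The measurability remark for the projected $\sol$, and the use of \cref{asm:mono} to make the cross term genuinely sign-definite (rather than merely small), are the remaining points needing care. Finally, for the footnote variant ($\noisedev=0$, $\varcontrol\ge0$), the additive term $\current[\Cons]\noisevar$ vanishes and the surviving multiplicative factor $1+\current[\Cons]\varcontrol^2$ is absorbed by $1-A_\run$ once the stepsizes are small enough, producing a purely geometric contraction.
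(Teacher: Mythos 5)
Your proposal is correct and follows essentially the same route as the paper's proof: specialize the intermediate inequality \eqref{eq:dseg-descent-secondlast} to $\varcontrol=0$ (thereby avoiding the factor-two slack in the final statement of \cref{lem:dsseg-descent-stoch}), drop the cross term via \cref{asm:mono}, pass to $\dist(\cdot,\sols)$, apply \eqref{eq:err-bound}, and conclude with \cref{lem:cst-noise-descent} and \cref{lem:chung1954}. The only cosmetic difference is that you introduce the distance by choosing $\sol$ as the ($\current[\filter]$-measurable) projection of $\current$ onto $\sols$, whereas the paper takes the minimum over $\sol\in\sols$ and uses concavity of the min; both are valid, and your explicit exponent accounting for the noise terms in Part 2 matches what the paper leaves implicit.
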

}
\begin{proof}
We first consider the case $\varcontrol=0$ so that $\ex[\norm{\current[\noise]}^2]\le\noisevar$ and $\ex[\norm{\inter[\noise]}^2]\le\noisevar$.
Since $\current[\step]\le\cons/\lips$, from \eqref{eq:dseg-descent-secondlast} we deduce
\begin{align}
    \exoft[\norm{\update-\sol}^2]
        & \le
        \norm{\current-\sol}^2
        - \current[\step]\current[\stepalt](1-\cons^2)\norm{\vecfield(\current)}^2
        + (2\current[\step]^2\current[\stepalt]\lips
            + \current[\step]^3\current[\stepalt]\lips^2
            + \current[\stepalt]^2)\noisevar. \notag
\end{align}
By concavity of the minimum operator, we then obtain
\begin{align}
    \exoft[\min_{\sol\in\sols}\norm{\update-\sol}^2]
        & \le \min_{\sol\in\sols}\exoft[\norm{\update-\sol}^2] \notag \\
        & \le
        \min_{\sol\in\sols}\norm{\current-\sol}^2
        - \current[\step]\current[\stepalt](1-\cons^2)\norm{\vecfield(\current)}^2\notag\\
        &~~~~+ (2\current[\step]^2\current[\stepalt]\lips
            + \current[\step]^3\current[\stepalt]\lips^2
            + \current[\stepalt]^2)\noisevar. \notag
\end{align}
In other words,
\begin{align}
    \exoft[\dist(\update, \sols)^2]
        & \le
        \dist(\current, \sols)^2
        - \current[\step]\current[\stepalt](1-\cons^2)\norm{\vecfield(\current)}^2
        + (2\current[\step]^2\current[\stepalt]\lips
            + \current[\step]^3\current[\stepalt]\lips^2
            + \current[\stepalt]^2)\noisevar. \notag
    \label{eq:descent-stoch-general}
\end{align}
Using \cref{asm:err-bound} and the law of total expectation, this gives
\begin{align}
    \ex[\dist(\update, \sols)^2]
    & \le
    (1-\current[\step]\current[\stepalt]\errbcst^2(1-\cons^2))\ex[\dist(\current, \sols)^2]
    + (2\current[\step]^2\current[\stepalt]\lips
    + \current[\step]^3\current[\stepalt]\lips^2
    + \current[\stepalt]^2)\noisevar. \notag
\end{align}
Points 
\ref{thm:dsseg-general-rate-a} and \ref{thm:dsseg-general-rate-b} are obtained respectively by applying \cref{lem:cst-noise-descent} and \cref{lem:chung1954}.

For the case $\varcontrol\neq 0$, the term before $\ex[\dist(\current, \sols)^2]$ is replaced by $1+\current[\Cons]\varcontrol^2-\current[\consalt]\errbcst^2$ where $\current[\consalt]=
\current[\step]\current[\stepalt]
-\current[\step]^3\current[\stepalt]\lips^2
-8\current[\step]^2\current[\stepalt]^2\varcontrol^2$
is defined in the proof of \cref{thm:dsseg-general-as}.
In point \ref{thm:dsseg-general-rate-a}, the term $1+\current[\Cons]\varcontrol^2-\current[\consalt]\errbcst^2$ can be made in $(0, 1)$ for $\step$ and $\stepalt$ properly chosen.
Precisely, we need
\begin{equation}
\notag
    (4\step\lips   
    + 2\step^2\lips^2
    + \frac{4\stepalt}{\step}
    + 16\step\stepalt\varcontrol^2)\varcontrol^2
    + (\step^2\lips^2+8\step\stepalt\varcontrol^2)\errbcst^2 < \errbcst^2.
\end{equation}
To prove point \ref{thm:dsseg-general-rate-b}, notice that the conditions of \cref{lem:chung1954} are still verified when $\step,\stepalt$ and $\stepoffset$ are large enough.
For example, if it holds for all $\run$ 
\begin{equation}
\notag
    (4\current[\step]\lips   
    + 2\current[\step]^2\lips^2
    + \frac{4\current[\stepalt]}{\current[\step]}
    + 16\current[\step]\current[\stepalt]\varcontrol^2)\varcontrol^2
    + (\current[\step]^2\lips^2+8\current[\step]\current[\stepalt]\varcontrol^2)\errbcst^2 \le \errbcst^2/2
\end{equation}
and $\step\stepalt\errbcst^2/2 > \exponent$ then \cref{lem:chung1954} can be applied.
Finally, if $\noisedev=0$, the key inequality becomes
\begin{align}
    \exoft[\norm{\update - \sol}^{2}]
	&\leq (1+\current[\Cons]\varcontrol^2)\norm{\current-\sol}^{2}
	- \current[\consalt]\norm{\vecfield(\current)}^{2}.
	\notag
\end{align}
We therefore obtain geometric convergence for $1+\current[\Cons]\varcontrol^2-\current[\consalt]\errbcst^2\in(0,1)$. 
\end{proof}

\subsection{Proof of \cref{thm:dsseg-affine}}
{
\renewcommand{\thetheorem}{\ref{thm:dsseg-affine}}%
\begin{theorem}
Let $\vecfield$ be an affine operator satisfying \cref{asm:mono}, and suppose that \cref{asm:noise} holds.
Take a constant exploration stepsize  $\current[\step] \equiv \step \le\cons/\lips$ with $\cons<1$ (here $\lips$ is the largest singular value of the associated matrix). Then, the iterates $\seqinf{\state}{\run}$ of \eqref{eq:DSEG} enjoy the following rates:
\begin{enumerate}
\item
If the update stepsize is constant $\current[\stepalt]\equiv\stepalt\le\step$, then:
\begin{align}
    \ex[\dist(\current, \sols)^2]
    \leq
	(1-\minuscst)^{\run-1}\dist(\state_\start, \sols)^2
	+ \frac{\pluscst}{\minuscst} \notag
\end{align}
with  $\pluscst=\stepalt^2(1+\cons^2)\noisevar$ and $\minuscst=\step\stepalt\errbcst^2(1-\cons^2)$.
\item
If the update stepsize is of the form $\current[\stepalt]= \stepalt/(\run+\stepoffset)$
for $\stepalt>1/(\errbcst^2\step(1-\cons^2))$ and $\stepoffset>\stepalt/\step$, then:
\begin{equation}
    \ex[\dist(\current, \sols)^{2}]
    \leq
	\frac{\pluscst}{\minuscst-1} \frac{1}{\run}
	+ \smalloh\left(\frac{1}{\run}\right). \notag
\end{equation}
\end{enumerate}
\end{theorem}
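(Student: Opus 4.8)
The plan is to establish an affine analogue of \cref{lem:dsseg-descent-stoch} whose constants are sharpened by exploiting linearity. Since $\mat\sol+\vvec=0$ at any $\sol\in\sols$, we may write $\vecfield(\point)=\mat(\point-\sol)$; I would fix $\sol$ to be the metric projection of $\current$ onto the affine set $\sols$, so that $\dist(\current,\sols)=\norm{\current-\sol}$. First I would expand $\update-\sol=(\current-\sol)-\current[\stepalt](\vecfield(\inter)+\inter[\noise])$ and substitute $\inter=\interdet-\current[\step]\current[\noise]$. Using $\vecfield(\inter)=\mat(\inter-\sol)=\vecfield(\interdet)-\current[\step]\mat\current[\noise]$, this writes $\update-\sol$ as a deterministic part $(\current-\sol)-\current[\stepalt]\vecfield(\interdet)$ plus two noise contributions $\current[\step]\current[\stepalt]\mat\current[\noise]$ and $-\current[\stepalt]\inter[\noise]$, one carrying $\current[\noise]$ through $\mat$ and one carrying $\inter[\noise]$.

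The key step is to take $\exoft[\cdot]$ and observe that every cross term cancels: both $\current[\noise]$ and $\inter[\noise]$ are conditionally zero-mean (the latter relative to the finer filtration $\inter[\filter]\supseteq\current[\filter]$ that already records $\current[\noise]$), so by the tower property the squared norm splits as $\norm{(\current-\sol)-\current[\stepalt]\vecfield(\interdet)}^2 + \current[\step]^2\current[\stepalt]^2\exoft[\norm{\mat\current[\noise]}^2] + \current[\stepalt]^2\exoft[\norm{\inter[\noise]}^2]$. For the noise I would use $\norm{\mat\current[\noise]}\le\lips\norm{\current[\noise]}$ together with $\current[\step]\lips\le\cons$ (so $\current[\step]^2\lips^2\le\cons^2$) to bound the two stochastic terms by $\cons^2\current[\stepalt]^2\noisevar$ and $\current[\stepalt]^2\noisevar$, which is exactly $\current[\stepalt]^2(1+\cons^2)\noisevar$ when $\varcontrol=0$. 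For the deterministic part I would proceed as in \cref{lem:dsseg-descent-stoch}: writing $\current-\sol=(\interdet-\sol)+\current[\step]\vecfield(\current)$ and expanding, I discard the nonnegative term $\product{\vecfield(\interdet)}{\interdet-\sol}$ via \cref{asm:mono}, then use $\current[\stepalt]\le\current[\step]$ and the identity $\norm{\vecfield(\interdet)-\vecfield(\current)}^2=\current[\step]^2\norm{\mat\vecfield(\current)}^2\le\cons^2\norm{\vecfield(\current)}^2$ to obtain $\norm{(\current-\sol)-\current[\stepalt]\vecfield(\interdet)}^2\le\norm{\current-\sol}^2-\current[\step]\current[\stepalt](1-\cons^2)\norm{\vecfield(\current)}^2$.

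Finally, because $\sol$ is the projection, $\dist(\update,\sols)^2\le\norm{\update-\sol}^2$, and I would invoke the error bound, which holds automatically here with $\errbcst$ the smallest nonzero singular value of $\mat$ (the kernel component of $\current-\sol$ is annihilated by $\mat$, while its orthogonal complement lies in the row space of $\mat$, on which $\norm{\mat\,\cdot}\ge\errbcst\norm{\cdot}$). This yields the one-step contraction $\exof{\dist(\update,\sols)^2\given\current[\filter]}\le(1-\current[\step]\current[\stepalt]\errbcst^2(1-\cons^2))\dist(\current,\sols)^2+\current[\stepalt]^2(1+\cons^2)\noisevar$. Taking total expectations, Part~\ref{thm:dsseg-affine-a} follows from \cref{lem:cst-noise-descent} with $\minuscst=\step\stepalt\errbcst^2(1-\cons^2)$ and $\pluscst=\stepalt^2(1+\cons^2)\noisevar$, and Part~\ref{thm:dsseg-affine-b} from \cref{lem:chung1954} applied with $\current[\stepalt]=\stepalt/(\run+\stepoffset)$, $\consc=\step\stepalt\errbcst^2(1-\cons^2)$, $\exponent=1$ and $\conscalt=\pluscst$; the hypotheses $\stepalt>1/(\errbcst^2\step(1-\cons^2))$ and $\stepoffset>\stepalt/\step$ are precisely what guarantees $\consc>\exponent=1$ and $\current[\stepalt]\le\current[\step]$ for all $\run$.

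The step I expect to be the main obstacle is the bookkeeping around the solution \emph{set} rather than a single point: one must check that taking $\sol$ to be the projection of $\current$ is simultaneously compatible with the monotonicity inequality and with the error bound, and justify the value of $\errbcst$ through the minimax characterization of the singular values restricted to the row space of $\mat$. The conditioning argument is equally delicate, since $\inter$—and hence the multiplier $\mat\current[\noise]$—depends on $\current[\noise]$, so the vanishing of the cross terms must be argued through the tower property over the two nested filtrations $\current[\filter]\subseteq\inter[\filter]$ rather than by naive independence. The refinements for $\varcontrol>0$ and for $\noisedev=0$ would then follow by the same modifications already indicated for \cref{thm:dsseg-general-rate}.
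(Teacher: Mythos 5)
Your proposal is correct and takes essentially the same route as the paper's own proof: both exploit affinity to write $\inter[\svecfield]=\vecfield(\interdet)-\current[\step]\mat\current[\noise]+\inter[\noise]$ so that the zero-mean assumption (applied through the tower property over the nested filtrations) removes all cross terms, the deterministic part contracts by $-\current[\step]\current[\stepalt](1-\cons^2)\norm{\vecfield(\current)}^2$ using $\current[\stepalt]\le\current[\step]$, the noise contributes exactly $\current[\stepalt]^2(1+\cons^2)\noisevar$, and the conclusion follows from the error bound (with $\errbcst$ the least nonzero singular value of $\mat$) together with \cref{lem:cst-noise-descent} and \cref{lem:chung1954}. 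The only differences are cosmetic bookkeeping: you fix $\sol$ as the projection of $\current$ onto $\sols$ and use $\dist(\update,\sols)\le\norm{\update-\sol}$, whereas the paper keeps $\sol\in\sols$ arbitrary and passes to $\dist(\cdot,\sols)$ via concavity of the minimum.
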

For the sake of readability, the involved constants are stated for the case $\varcontrol=0$.
}
\begin{proof}
To focus on the most important points of the proof, we shall consider the case $\varcontrol=0$, while it is straightforward to derive the same kind of result when $\varcontrol>0$ by following the reasoning of previous proofs.
The crucial step here is then the derivation of a stochastic descent inequality in the form of \eqref{eq:dsseg-descent-stoch}.
This is again based on \eqref{eq:basic}.
Writing $\vecfield(\point) = \mat\point + \vvec$, we can expand
\begin{equation}
    \inter[\svecfield]
    = \mat\current - \current[\step]\mat^2\current -\current[\step]\mat\vvec - \current[\step]\mat\current[\noise]
      + \vvec + \inter[\noise]
    = \vecfield(\interdet) - \current[\step]\mat\current[\noise] + \inter[\noise]. \notag
\end{equation}
We recall that $\interdet = \current-\current[\step]\vecfield(\current)$.
Let $\sol\in\sols$. Together with the zero-mean assumption \eqref{eq:mean}, the above shows that
\begin{align}
    \exoft[\product{\inter[\svecfield]}{\interdet-\sol}] &= \product{\vecfield(\interdet)}{\interdet-\sol}, \notag \\
    \exoft[\product{\inter[\svecfield]}{\vecfield(\current)}] &= \product{\vecfield(\interdet)}{\vecfield(\current)}, \notag \\
    \exoft[\norm{\inter[\svecfield]}^2]
    &= \norm{\vecfield(\interdet)}^2 + \exoft[\norm{\current[\step]\mat\current[\noise]}^2] + \exoft[\norm{\inter[\noise]}^2]. \notag
\end{align}
Similar to \eqref{eq:gamma-eta-term}, we write
\begin{align}
& -2\current[\step]\current[\stepalt]\product{\vecfield(\interdet)}{\vecfield(\current)}
+ \current[\stepalt]^2\norm{\vecfield(\interdet)}^2 \notag \\
& ~~\le
-2\current[\step]\current[\stepalt]\product{\vecfield(\interdet)}{\vecfield(\current)}
+ \current[\step]\current[\stepalt]\norm{\vecfield(\interdet)}^2 \notag \\
& ~~=
\current[\step]\current[\stepalt]
(\norm{\vecfield(\current)-\vecfield(\interdet)}^2-\norm{\vecfield(\current)}^2) \notag \\
& ~~\le \current[\step]\current[\stepalt](\current[\step]^2\lips^2-1)\norm{\vecfield(\current)}^2. \notag
\end{align}
We have $\product{\vecfield(\interdet)}{\interdet-\sol}\ge0$ by \cref{asm:mono}
and
$\exoft[\norm{\current[\step]\mat\current[\noise]}^2] + \exoft[\norm{\inter[\noise]}^2]\le(\current[\step]^2\lips^2+1)\noisevar$
by Lipschitz continuity of $\vecfield$ and the finite variance assumption (\ie \eqref{eq:variance} with $\varcontrol=0$).
Taking expectation with respect to $\current[\filter]$ over \eqref{eq:basic} then leads to
\begin{align}
    \exoft[\norm{\update-\sol}^2]
        & \le
        \norm{\current-\sol}^2
        - \current[\step]\current[\stepalt](1-\current[\step]^2\lips^2)
        \norm{\vecfield(\current)}^2
        + \current[\stepalt]^2(\current[\step]^2\lips^2+1)\noisevar \notag \\
        & =
        \norm{\current-\sol}^2
        - \current[\step]\current[\stepalt](1-\cons^2)
        \norm{\vecfield(\current)}^2
        + \current[\stepalt]^2(1+\cons^2)\noisevar. \notag
\end{align}
Proceeding as in the proof of \cref{thm:dsseg-general-rate}, we get
\begin{align}
    \exoft[\dist(\update, \sols)^2]
        & \le
        \dist(\current, \sols)^2
        - \current[\step]\current[\stepalt](1-\cons^2)\norm{\vecfield(\current)}^2
        + \current[\stepalt]^2(1+\cons^2)\noisevar. \notag
\end{align}
Since $\vecfield$ is affine, it verifies the error bound condition \eqref{eq:err-bound}.
Writing $\step$ in the place of $\current[\step]$ and applying the law of total expectation, we obtain
\begin{align}
    \ex[\dist(\update, \sols)^2]
    & \le
    (1-\step\current[\stepalt]\errbcst^2(1-\cons^2))\ex[\dist(\current, \sols)^2] + \current[\stepalt]^2(1+\cons^2)\noisevar. \notag
\end{align}
We conclude with help of \cref{lem:cst-noise-descent} and \cref{lem:chung1954}.
\end{proof}

\section{Proofs for local convergence results}
\label{app:local}
\subsection{Local assumptions}

For sake of clarity, we recall here the local assumptions that will bu used in the local convergence results.

\AsmLipsLoc*
\smallbreak
\AsmNoiseLoc*
\smallbreak
\AsmMonoLoc*
\smallbreak
\AsmErrLoc*

For \cref{asm:noise-loc} in particular, when the neighborhood $\nhd$ is bounded, the term $\varcontrol \norm{\current-\sol}$ is also bounded and therefore, by choosing a larger $\noisedev$ if needed, \eqref{eq:variance-loc} can be simplified to 
\begin{equation}
\notag
\exof{\dnorm{\current[\noise]}^{\probmoment}
\given \current[\filter]}\one_{\{\current\in\nhd\}} \leq 
\noisedev^{\probmoment}~\text{ for all }~ \sol\in\sols.
\end{equation}
We will consider \eqref{eq:variance-loc} under this form in the sequel.

\subsection{Preparatory lemmas}
The proofs of the local statements are much more demanding.
The principle pillar of our analysis is a stability result formally stated in \cref{subsec:stability}. 
To prepare us for the challenge, we start by introducing the following lemma for bounding a recursive stochastic process.
\begin{lemma}
\label{lem:recursive-stoch}
Consider a filtration $\seqinf{\filter}{\run}$ and four $\seqinf{\filter}{\run}$-adapted
processes $\seqinf{\srv}{\run}$, $\seqinf{\srvm}{\run}$, $\seqinf{\srvp}{\run}$, $\seqinf{\snoise}{\run}$
such that $\seqinf{\srvp}{\run}$ is non-negative and the following recursive inequality is satisfied for all $\run\ge\start$
\[
    \update[\srv]
    \le \current[\srv] - \current[\srvm] + \update[\srvp] + \update[\snoise].
\]
Fixing a constant $\Cons>0$, we define the events $\seqinf{\eventA}{\run}$ by
$\eventA_\start \defeq \{\srv_\start\le\Cons/2\}$ and
$\current[\eventA] \defeq \{\current[\srv]\le\Cons\}\intersect\thinspace\{\current[\srvp]\le\Cons/4\}$
for $\run\ge\afterstart$.
We consider also the decreasing sequence of events $\seqinf{\event}{\run}$ defined by
$\event_\run\defeq \bigcap_{\start\le\runalt\le\run} \eventA_\runalt$.
If the following three assumptions hold true
\begin{enumerate}[label=(\roman*), topsep=3pt, itemsep=1pt]
    \item $\forall\run, \current[\srvm]\one_{\current[\event]}\ge0$,
    \item $\forall\run, \exof{\update[\snoise]\given{\current[\filter]}}\one_{\current[\event]}=0$,
    \item $\sum_{\run=\start}^\infty \ex[(\update[\snoise]^2+\update[\srvp])\one_{\current[\event]}]\le\smallproba\noisebound\prob(\eventA_\start)$,
\end{enumerate}
\vspace{-4pt}
where $\noisebound=\min(\Cons^2/16, \Cons/4)$ and $\delta\in(0,1)$, then
$\prob\left(\bigcap_{\run\ge\start}\current[\eventA]~\vert~\eventA_\start\right) \ge 1-\smallproba.$
\end{lemma}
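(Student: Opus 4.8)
The plan is to reformulate the whole statement through the exit time $\tau\defeq\inf\{\run\ge\start:\eventA_\run\text{ fails}\}$, for which $\event_\run=\{\tau>\run\}$ and $\bigcap_{\run\ge\start}\eventA_\run=\{\tau=\infty\}$. Since $\eventA_\start=\{\tau\ge\afterstart\}$ splits as $\{\tau=\infty\}\sqcup\{\afterstart\le\tau<\infty\}$ and $\{\tau=\infty\}\subseteq\eventA_\start$, the conclusion $\prob(\bigcap_{\run\ge\start}\eventA_\run\mid\eventA_\start)\ge1-\smallproba$ is equivalent to the bound $\prob(\afterstart\le\tau<\infty)\le\smallproba\,\prob(\eventA_\start)$, which is what I would establish.

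First I would freeze the dynamics on the good region by introducing the indicator-weighted partial sums
\[
  Z_\run\defeq\sum_{\runalt=\start}^{\run-1}\snoise_{\runalt+1}\one_{\event_\runalt},
  \qquad
  P_\run\defeq\sum_{\runalt=\start}^{\run-1}\srvp_{\runalt+1}\one_{\event_\runalt}.
\]
Each $\event_\runalt$ is $\filter_\runalt$-measurable (it depends only on $\srv$ and $\srvp$ up to time $\runalt$), so assumption (ii) makes $Z_\run$ a martingale with $Z_\start=0$, while $P_\run$ is non-decreasing because $\srvp\ge0$. Telescoping the recursive inequality over $\runalt=\start,\dots,\run-1$, discarding the non-negative terms $-\srvm_\runalt$ via assumption (i), and using $\srv_\start\le\Cons/2$ on $\eventA_\start$, I obtain on $\event_{\run-1}=\{\tau\ge\run\}$ the pathwise bound $\srv_\run\le\Cons/2+P_\run+Z_\run$.

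The crucial step is to show that the exit event is witnessed by overshoots of these two auxiliary processes, namely
\[
  \{\afterstart\le\tau<\infty\}\ \subseteq\ \{P_\infty>\Cons/4\}\cup\Bigl\{\sup_{\run}Z_\run>\Cons/4\Bigr\},
  \qquad P_\infty\defeq\lim_{\run}P_\run.
\]
Indeed, an exit at $\tau=\run$ occurs either because $\srv_\run>\Cons$, in which case the bound above forces $P_\run+Z_\run>\Cons/2$ and hence $P_\run>\Cons/4$ or $Z_\run>\Cons/4$; or because $\srvp_\run>\Cons/4$, in which case $\srvp_\run\one_{\event_{\run-1}}$ is precisely the last (non-negative) summand of $P_\run$, so that $P_\run\ge\srvp_\run>\Cons/4$ as well. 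In both modes the overshoot is inherited by $P_\infty$ or by $\sup_\run Z_\run$. Routing the direct $\srvp$-failure back into $\{P_\infty>\Cons/4\}$ rather than budgeting it separately is what keeps the constants sharp, and I expect this to be the main obstacle.

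It then remains to estimate the two overshoot probabilities. Markov's inequality gives $\prob(P_\infty>\Cons/4)\le(4/\Cons)\ex[P_\infty]=(4/\Cons)\sum_\runalt\ex[\srvp_{\runalt+1}\one_{\event_\runalt}]$. For the martingale part, orthogonality of the increments yields $\ex[Z_\infty^2]=\sum_\runalt\ex[\snoise_{\runalt+1}^2\one_{\event_\runalt}]$, which is finite by assumption (iii), so $Z_\run$ converges in $L^2$ and Doob's maximal inequality gives $\prob(\sup_\run Z_\run>\Cons/4)\le(16/\Cons^2)\ex[Z_\infty^2]$. Adding the two estimates and invoking assumption (iii) in the form $\sum_\runalt\ex[(\snoise_{\runalt+1}^2+\srvp_{\runalt+1})\one_{\event_\runalt}]\le\smallproba\noisebound\,\prob(\eventA_\start)$, I would conclude
\[
  \prob(\afterstart\le\tau<\infty)\ \le\ \max\Bigl(\tfrac{4}{\Cons},\tfrac{16}{\Cons^2}\Bigr)\smallproba\,\noisebound\,\prob(\eventA_\start)\ \le\ \smallproba\,\prob(\eventA_\start),
\]
the last inequality holding exactly because $\noisebound=\min(\Cons^2/16,\Cons/4)$ normalizes $\max(4/\Cons,16/\Cons^2)\,\noisebound\le1$. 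A final routine check is that $Z_\run$ is a genuine $L^2$-bounded martingale (measurability of $\event_\runalt$ together with (ii) and (iii)), which legitimizes the maximal inequality and the passage to $Z_\infty$.
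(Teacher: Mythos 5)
You prove the statement correctly, but by a genuinely different route from the paper. Both arguments share the same skeleton: telescope the recursion on the good event to bound $\srv_\run$ by $\Cons/2$ plus the accumulated $\srvp$ and noise terms, then show these accumulated terms exceed $\Cons/4$ with probability at most $\smallproba\prob(\eventA_\start)$. The execution of the second step, however, differs substantially. The paper bundles both error terms into the single process $\sumnoiseall_\run=\sumnoise_\run^2+\sum_{\runalt=\afterstart}^{\run}\srvp_\runalt$ built from raw, unweighted sums, introduces the decreasing auxiliary events $\eventalt_\run=\eventA_\start\intersect\{\max_{\afterstart\le\runalt\le\run}\sumnoiseall_\runalt\le\noisebound\}$, proves $\eventalt_\run\subset\event_\run$ by induction, and then hand-rolls a maximal inequality, tracking $\ex[\sumnoiseall_\run\one_{\eventalt_{\run-1}}]$ recursively while deducting at least $\noisebound$ of probability mass at every exit and telescoping. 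You instead pre-stop the processes by weighting each increment with $\one_{\event_\runalt}$, which turns the noise sum $Z_\run$ into a genuine $L^2$-bounded martingale on the whole space rather than an object controlled only on a shrinking family of events; the standard weak-type Doob inequality then applies off the shelf, Markov's inequality handles the monotone part $P_\run$, and a union bound over the two overshoot events finishes, with the direct failure mode $\srvp_\run>\Cons/4$ correctly absorbed into $\{P_\infty>\Cons/4\}$ because $\srvp_\run\one_{\event_{\run-1}}$ is itself a summand of $P_\run$. What the paper's version buys is self-containedness: it never invokes martingale convergence or Doob's inequality, only elementary conditional-expectation manipulations and set decompositions. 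What yours buys is brevity and transparency: the indicator-weighting trick is the textbook device for globalizing a martingale property assumed only on the good events, the two failure modes are cleanly separated, and the identity $\max(4/\Cons,16/\Cons^2)\cdot\noisebound=1$ makes plain why $\noisebound=\min(\Cons^2/16,\Cons/4)$ is exactly the right normalization, so your constants match the paper's with no slack.
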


\begin{proof}
Let us start by introducing the following two $\seqinf{\filter}{\run}$-adapted submartingale sequences 
\begin{equation}
    \sumnoise_\run
    \defeq
    \sum_{\runalt=\afterstart}^{\run} \snoise_\runalt ~~\text{ and }~~
    \sumnoiseall_\run
    \defeq \sumnoise_\run^2 + \sum_{\runalt=\afterstart}^{\run}\srvp_\runalt. \notag
\end{equation}
Subsequently, we define an auxiliary sequence of events 
\[\eventalt_\run 
\defeq \eventA_\start \intersect \thinspace
\{\max_{\afterstart\le\runalt\le\run}\sumnoiseall_\runalt\le\noisebound\}\] which is also decreasing.
With this at hand, we are ready to start our proof.

\vspace{0.3em}
(1) \textit{Inclusion $\eventalt_\run\subset\event_\run$}.\quad
We prove the inclusion by induction. The statement is true when $\run=\start$
as $\eventalt_\start=\event_\start=\eventA_\start$.
For $\run\ge2$, we write
\begin{equation}
\label{eq:stoch-rec-sum}
    \current[\srv]
    \le \srv_\start - \sum_{\runalt=\start}^{\run-1}\srvm_\runalt
    + \sum_{\runalt=\afterstart}^{\run-1}\update[\srvp][\runalt] + \sum_{\runalt=\afterstart}^{\run-1}\update[\snoise][\runalt].
\end{equation}
By induction hypothesis, $\eventalt_{\run-1}\subset\event_{\run-1}$,
and thus for all $\runalt\le\run-1$, we have
$\current[\eventalt]\subset\event_{\run-1}\subset\event_\runalt$.
Combining with \emph{(i)} we deduce that for any realization of $\eventalt_{\run}$,
$\sum_{\runalt=\start}^{\run-1}\srvm_\runalt\ge0$.
On the other hand, by definition of $\eventalt_{\run}$, it holds
$\sumnoiseall_\run\one_{\eventalt_\run}\le\noisebound$.
This implies
\begin{gather}\label{eq:rec-stoch-sum-bound}
    \left(\sum_{\runalt=\afterstart}^{\run-1}\update[\snoise][\runalt]\right)\one_{\eventalt_\run}
    =\sumnoise_{\run}\one_{\eventalt_\run}
    \le\sqrt{\noisebound}
    \le\Cons/4,\\
    \left(\sum_{\runalt=\afterstart}^{\run-1}\update[\srvp][\runalt]\right)\one_{\eventalt_\run}
    \le\noisebound
    \le\Cons/4.
\end{gather}
Finally as $\eventalt_{\run}\subset\eventA_{\start}$ we have $\srv_{\start}\one_{\eventalt_\run}\le\Cons/2$.
Therefore, for any realization of $\eventalt_\run$, using \eqref{eq:stoch-rec-sum}  gives
\begin{equation}
    \srv_\run\le\Cons/2-0+\Cons/4+\Cons/4=\Cons. \notag
\end{equation}
In the meantime \eqref{eq:rec-stoch-sum-bound} ensures as well $\current[\srvp]\one_{\eventalt_\run}\le\Cons/4$
and we have thus proven $\eventalt_\run\subset\eventA_\run$.
Using $\eventalt_\run\subset\eventalt_{\run-1}\subset\event_{\run-1}$,
we conclude $\eventalt_\run\subset\event_\run$.

\vspace{0.3em}
(2) \textit{Recursive bound on $\ex[\current[\sumnoiseall][\run]\one_{\last[\eventalt][\run]}]$}.\quad
Since $\last[\eventalt][\run] \subseteq \lastlast[\eventalt][\run] $, it holds $\last[\eventalt][\run] = \setexclude{\lastlast[\eventalt][\run]}{(\setexclude{\lastlast[\eventalt][\run]}{\last[\eventalt][\run]})}$. We can therefore decompose
\begin{align}
    \ex[\current[\sumnoiseall][\run]\one_{\last[\eventalt][\run]}]
    & =
    \ex[(\current[\sumnoiseall][\run]-\last[\sumnoiseall][\run])\one_{\last[\eventalt][\run]}]
    + \ex[\last[\sumnoiseall][\run]\one_{\last[\eventalt][\run]}]
    \notag\\
    & =
    \ex[(\current[\snoise]^2+2\current[\snoise]\last[\sumnoise]+\current[\srvp])
    \one_{\last[\eventalt][\run]}]
    + \ex[\last[\sumnoiseall][\run]\one_{\lastlast[\eventalt][\run]}]
    - \ex[\last[\sumnoiseall][\run]\one_{\setexclude{\lastlast[\eventalt][\run]}{\last[\eventalt][\run]}}]. \notag
\end{align}
From the law of total expectation, $\eventalt_{\run-1}\subset\event_{\run-1}$ and \emph{(ii)} we have
\begin{equation}
    \ex[\current[\snoise][\run]
        \last[\sumnoise][\run] \one_{\last[\eventalt][\run]}]
    = 
    \ex[
        \exof{\current[\snoise][\run]\given{\last[\filter][\run]}}
        \last[\sumnoise][\run]  \one_{\last[\eventalt][\run]}
    ]
    = 0. \notag
\end{equation}
As $\snoise_\run^2+\srvp_\run$ is non-negative, using again $\eventalt_{\run-1}\subset\event_{\run-1}$, we get
\begin{equation}
    \ex[(\current[\snoise]^2+\current[\srvp])\one_{\last[\eventalt][\run]}]
    \le\ex[(\current[\snoise]^2+\current[\srvp])\one_{\last[\event][\run]}]. \notag
\end{equation}
By definition for any realization in $\setexclude{\lastlast[\eventalt][\run]}{\last[\eventalt][\run]}$, it holds $\last[\sumnoiseall][\run]>\noisebound$ and thus
\begin{equation}
    \ex[
        \last[\sumnoiseall][\run]
        \one_{\setexclude{\lastlast[\eventalt][\run]}{\last[\eventalt][\run]}}
    ]
    \ge
        \noisebound
        \ex[
        \one_{\setexclude{\lastlast[\eventalt][\run]}{\last[\eventalt][\run]}}
    ]
    =
    \noisebound
    \prob(\setexclude{\lastlast[\eventalt][\run]}{\last[\eventalt][\run]}). \notag
\end{equation}
Combining the above we deduce the following recursive bound
\begin{equation}
    \ex[\current[\sumnoiseall][\run]\one_{\last[\eventalt][\run]}]
    \le
    \ex[\last[\sumnoiseall][\run]\one_{\lastlast[\eventalt][\run]}]
    + \ex[(\current[\snoise]^2+\current[\srvp])\one_{\last[\event][\run]}]
    - \noisebound\prob(\setexclude{\lastlast[\eventalt][\run]}{\last[\eventalt][\run]}).
    \label{eq:rec-stoch-rec-bound}
\end{equation}

\vspace{0.3em}
(3) \textit{Conclude.}\quad
Summing \eqref{eq:rec-stoch-rec-bound} from $\run=3$ to $\nRuns$ we obtain
\begin{align}
    \ex[\current[\sumnoiseall][\nRuns]\one_{\last[\eventalt][\nRuns]}]
    &\le
    \ex[\sumnoiseall_\afterstart\one_{\eventalt_\start}]
    + \sum_{\run=3}^{\nRuns}
        \ex[(\current[\snoise]^2+\current[\srvp])\one_{\last[\event][\run]}]
    - \noisebound
        \sum_{\run=3}^{\nRuns}
        \prob(\setexclude{\lastlast[\eventalt][\run]}{\last[\eventalt][\run]}) \notag \\
    &= \sum_{\run=\afterstart}^{\nRuns}
        \ex[(\current[\snoise]^2+\current[\srvp])\one_{\last[\event][\run]}]
    - \noisebound\prob(\setexclude{\eventA_\start}{\last[\eventalt][\nRuns]}),
    \label{eq:rec-stoch-rec-bound-sum}
\end{align}
where in the second line we use $\sumnoiseall_\afterstart=\snoise_\afterstart^2+\srvp_\afterstart$, $\eventalt_\start=\event_\start=\eventA_\start$ and
$\setexclude{\eventalt_\start}{\last[\eventalt][\nRuns]}
=\disjUnion_{3\le\run\le\nRuns}
(\setexclude{\lastlast[\eventalt][\run]}{\last[\eventalt][\run]})$ 
with $\disjUnion$ denoting the disjoint union (true since $(\eventalt_\run)_{\run\ge\start}$ is a decreasing sequence of events).
By repeating the same arguments that are used before and using the fact that $\sumnoiseall_\nRuns$ is non-negative, 
\begin{align}
\prob(\setexclude{\eventA_\start}{\current[\eventalt][\nRuns]})
&= 
\prob(\setexclude{\last[\eventalt][\nRuns]}{\current[\eventalt][\nRuns]})
+ \prob(\setexclude{\eventA_\start}{\last[\eventalt][\nRuns]}) \notag \\
&\le
\frac{1}{\noisebound}
\ex[\current[\sumnoiseall][\nRuns]
    \one_{\setexclude{\last[\eventalt][\nRuns]}{\current[\eventalt][\nRuns]}}]
    + \prob(\setexclude{\eventA_\start}{\last[\eventalt][\nRuns]}) \notag \\
&\le
\frac{1}{\noisebound}
\ex[\current[\sumnoiseall][\nRuns]\one_{\last[\eventalt][\nRuns]}]
    + \prob(\setexclude{\eventA_\start}{\last[\eventalt][\nRuns]}).
\label{eq:rec-stoch-final-decomp}
\end{align}
\eqref{eq:rec-stoch-final-decomp}, \eqref{eq:rec-stoch-rec-bound-sum} along with \emph{(iii)} lead to
\begin{equation}
\prob(\setexclude{\eventA_\start}{\current[\eventalt][\nRuns]})
\le
\frac{1}{\noisebound}
\sum_{\run=\afterstart}^{\nRuns}
    \ex[(\current[\snoise]^2+\current[\srvp])\one_{\last[\event][\run]}]
\le \smallproba\prob(\eventA_\start). \notag
\end{equation}
Subsequently,
\begin{equation}
    \probof{\eventalt_\nRuns\given{\eventA_\start}}
    = 1 - \frac{\prob(\setexclude{\eventA_\start}{\current[\eventalt][\nRuns]})}{\prob(\eventA_\start)} \ge 1-\smallproba. \notag
\end{equation}
With $\eventalt_\nRuns\subset\event_\nRuns$ this also gives $\probof{\event_\nRuns\given{\eventA_\start}}\ge1-\smallproba$.
We notice that $\bigcap_{\run\ge\start}\current[\event] = \bigcap_{\run\ge\start}\current[\eventA]$. As $(\event_\run)_{\run\ge1}$ is decreasing, by continuity from above we conclude
\begin{equation}
    \prob\left(\bigcap_{\run\ge\start}\current[\eventA]~\vert~\eventA_\start\right) = \lim_{\run\rightarrow\infty} \probof{\event_\run\given{\eventA_\start}}
    \ge 1-\smallproba. \notag
\end{equation}
\end{proof}


To apply \cref{lem:recursive-stoch}, we establish another quasi-descent lemma which holds without taking expectation values.

\begin{lemma}\label{lem:dsseg-descent-stoch-noexp}
For all $\sol\in\sols$, $\run\in\N$, the iterates generated by \eqref{eq:DSEG} satisfy the following inequality
\begin{align}
    \norm{\update-\sol}^2
        & \le \norm{\current-\sol}^2
        - 2\current[\stepalt]\product{\vecfield(\inter)}{\inter-\sol} \notag \\
        &~~- 2\current[\step]\current[\stepalt]\norm{\vecfield(\current)}(\norm{\vecfield(\current)}-\norm{\vecfield(\interdet)-\vecfield(\current)}) \notag \\
        & ~~
        - 2\current[\stepalt]\product{\inter[\noise]}{\current-\sol}
        -2\current[\step]\current[\stepalt]
        \product{\vecfield(\interdet)}{\current[\noise]} \notag \\
        &~~
        + 2\current[\step]\current[\stepalt]
        \norm{\current[\svecfield]}
        \norm{\vecfield(\inter)-\vecfield(\interdet)}
        + \current[\stepalt]^2\norm{\inter[\svecfield]}^2
    \label{eq:dsseg-descent-stoch-noexp-nolips}
\end{align}
If we assume \cref{asm:Lipschitz-loc} for some solution $\sol$ and that $\current$, $\interdet$, $\inter$ all lie in this neighborhood, then
\begin{align}
    \norm{\update-\sol}^2
        & \le \norm{\current-\sol}^2
        - 2\current[\stepalt]\product{\vecfield(\inter)}{\inter-\sol}
        - 2\current[\step]\current[\stepalt](1-\current[\step]\lips)
        \norm{\vecfield(\current)}^2 \notag \\
        & ~~ - 2\current[\stepalt]\product{\inter[\noise]}{\current-\sol}
        -2\current[\step]\current[\stepalt]
        \product{\vecfield(\interdet)}{\current[\noise]}
        + 2\current[\step]^2\current[\stepalt]\lips
        \norm{\current[\noise]}\norm{\current[\svecfield]}
        + \current[\stepalt]^2\norm{\inter[\svecfield]}^2.
    \label{eq:dsseg-descent-stoch-noexp}
\end{align}
\end{lemma}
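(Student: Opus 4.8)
The plan is to begin from the elementary expansion
$\norm{\update - \sol}^{2} = \norm{\current - \sol}^{2} - 2\current[\stepalt]\product{\inter[\svecfield]}{\current - \sol} + \current[\stepalt]^{2}\norm{\inter[\svecfield]}^{2}$,
which is the precursor of \eqref{eq:basic}, and to peel off the seven right-hand-side terms one at a time using the three exact identities $\current - \inter = \current[\step]\current[\svecfield]$, $\inter - \interdet = -\current[\step]\current[\noise]$ and $\interdet - \current = -\current[\step]\vecfield(\current)$. No distributional assumption is needed for the first inequality; only the algebra and Cauchy--Schwarz. First I would write $\inter[\svecfield] = \vecfield(\inter) + \inter[\noise]$ in the cross term, which immediately isolates the target noise contribution $-2\current[\stepalt]\product{\inter[\noise]}{\current - \sol}$ and leaves $-2\current[\stepalt]\product{\vecfield(\inter)}{\current - \sol}$.

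Next I would re-center this remaining inner product at $\inter$ via $\current - \sol = (\inter - \sol) + \current[\step]\current[\svecfield]$, producing the ``monotone'' term $-2\current[\stepalt]\product{\vecfield(\inter)}{\inter - \sol}$ together with $-2\current[\step]\current[\stepalt]\product{\vecfield(\inter)}{\current[\svecfield]}$. I would then route the latter through the deterministic leading state: splitting $\vecfield(\inter) = \vecfield(\interdet) + (\vecfield(\inter) - \vecfield(\interdet))$ isolates the residual $-2\current[\step]\current[\stepalt]\product{\vecfield(\inter) - \vecfield(\interdet)}{\current[\svecfield]}$, which Cauchy--Schwarz bounds by the target $2\current[\step]\current[\stepalt]\norm{\current[\svecfield]}\norm{\vecfield(\inter) - \vecfield(\interdet)}$. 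Splitting $\current[\svecfield] = \vecfield(\current) + \current[\noise]$ in the surviving piece $-2\current[\step]\current[\stepalt]\product{\vecfield(\interdet)}{\current[\svecfield]}$ yields the second target noise term $-2\current[\step]\current[\stepalt]\product{\vecfield(\interdet)}{\current[\noise]}$ and a pure-gradient term $-2\current[\step]\current[\stepalt]\product{\vecfield(\interdet)}{\vecfield(\current)}$; for the latter I would write $\vecfield(\interdet) = \vecfield(\current) + (\vecfield(\interdet) - \vecfield(\current))$ and apply Cauchy--Schwarz once more to reach $-2\current[\step]\current[\stepalt]\norm{\vecfield(\current)}\bigl(\norm{\vecfield(\current)} - \norm{\vecfield(\interdet) - \vecfield(\current)}\bigr)$. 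Collecting everything, and carrying $\current[\stepalt]^{2}\norm{\inter[\svecfield]}^{2}$ along untouched, gives \eqref{eq:dsseg-descent-stoch-noexp-nolips}.

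For the second inequality I would invoke \cref{asm:Lipschitz-loc}, valid because $\current$, $\interdet$, $\inter$ are assumed to lie in the neighborhood, together with $\norm{\interdet - \current} = \current[\step]\norm{\vecfield(\current)}$ and $\norm{\inter - \interdet} = \current[\step]\norm{\current[\noise]}$. These give $\norm{\vecfield(\interdet) - \vecfield(\current)} \le \current[\step]\lips\norm{\vecfield(\current)}$ and $\norm{\vecfield(\inter) - \vecfield(\interdet)} \le \current[\step]\lips\norm{\current[\noise]}$. Substituting the first into the gradient term turns $-2\current[\step]\current[\stepalt]\norm{\vecfield(\current)}\bigl(\norm{\vecfield(\current)} - \norm{\vecfield(\interdet) - \vecfield(\current)}\bigr)$ into $-2\current[\step]\current[\stepalt](1 - \current[\step]\lips)\norm{\vecfield(\current)}^{2}$ --- the inequality direction is preserved because the prefactor $-2\current[\step]\current[\stepalt]\norm{\vecfield(\current)}$ is nonpositive regardless of the sign of $1 - \current[\step]\lips$ --- and substituting the second bound into the correction term gives $2\current[\step]^{2}\current[\stepalt]\lips\norm{\current[\noise]}\norm{\current[\svecfield]}$, yielding \eqref{eq:dsseg-descent-stoch-noexp}.

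The main obstacle is not any single inequality, since each reduces to Cauchy--Schwarz, but the bookkeeping: one must split off $\inter[\noise]$ against $\current - \sol$ \emph{before} introducing $\interdet$ (so the noise couples to $\current - \sol$ rather than to $\inter - \sol$), and route the $\vecfield(\inter)$--$\current[\svecfield]$ coupling through $\vecfield(\interdet)$ so that precisely the residuals $\norm{\vecfield(\inter) - \vecfield(\interdet)}$ and $\norm{\vecfield(\interdet) - \vecfield(\current)}$ surface. Because no expectation is taken, every noise--gradient cross term must be retained verbatim, so the decomposition has to be arranged to land exactly on the stated right-hand side.
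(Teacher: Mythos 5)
Your proposal is correct and follows essentially the same route as the paper's proof: the same initial expansion of $\norm{\update-\sol}^2$, the same splitting of $\inter[\svecfield]$, the same re-centering of $\product{\vecfield(\inter)}{\current-\sol}$ at $\inter$ and routing through $\vecfield(\interdet)$, the same Cauchy--Schwarz bounds, and the same Lipschitz estimates $\norm{\vecfield(\interdet)-\vecfield(\current)}\le\current[\step]\lips\norm{\vecfield(\current)}$ and $\norm{\vecfield(\inter)-\vecfield(\interdet)}\le\current[\step]\lips\norm{\current[\noise]}$ for the second inequality. Your explicit remark that the sign of the prefactor $-2\current[\step]\current[\stepalt]\norm{\vecfield(\current)}$ preserves the inequality direction is a detail the paper leaves implicit, but the argument is the same.
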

\begin{proof}
Similar to \eqref{eq:basic}, we develop
\begin{align}
    \norm{\update-\sol}^2
        & = \norm{\current-\sol}^2 - 2\current[\stepalt]\product{\vecfield(\inter)}{\current-\sol}
        - 2\current[\stepalt]\product{\inter[\noise]}{\current-\sol}
        + \current[\stepalt]^2\norm{\inter[\svecfield]}^2. \notag
\end{align}
We further develop the second term on the \ac{RHS} of the equality
\begin{align}
    \product{\vecfield(\inter)}{\current-\sol}
    &= \product{\vecfield(\inter)}{\inter-\sol}
    + \current[\step]\product{\vecfield(\inter)}{\current[\svecfield]} \notag \\
    &= \product{\vecfield(\inter)}{\inter-\sol}
    + \current[\step]
    \product{\vecfield(\inter)-\vecfield(\interdet)}{\current[\svecfield]}
    + \current[\step]
    \product{\vecfield(\interdet)}{\current[\svecfield]}. \notag
\end{align}
To deal with the last term
\begin{align}
    \product{\vecfield(\interdet)}{\current[\svecfield]}
    &= \product{\vecfield(\interdet)}{\vecfield(\current)}
    + \product{\vecfield(\interdet)}{\current[\noise]} \notag \\
    &= \product{\vecfield(\interdet)-\vecfield(\current)}{\vecfield(\current)}
    + \norm{\vecfield(\current)}^2
    + \product{\vecfield(\interdet)}{\current[\noise]}. \notag
\end{align}
By combing all the above, we readily get \eqref{eq:dsseg-descent-stoch-noexp-nolips} with Cauchy's inequality. 
If \cref{asm:Lipschitz-loc} holds on a set that $\current$, $\interdet$, $\inter$ belong to, we can further bound
\begin{align}
    &2\current[\step]\current[\stepalt]
    \norm{\vecfield(\inter)-\vecfield(\interdet)}\norm{\current[\svecfield]}
    \le 2\current[\step]^2\current[\stepalt]\lips
    \norm{\current[\noise]}\norm{\current[\svecfield]}, \notag \\
    &2\current[\step]\current[\stepalt]
    \norm{\vecfield(\interdet)-\vecfield(\current)}\norm{\vecfield(\current)}
    \le 2\current[\step]^2\current[\stepalt]\lips\norm{\vecfield(\current)}^2, \notag
\end{align}
which gives \eqref{eq:dsseg-descent-stoch-noexp}.
\end{proof}

\subsection{A stability result}
\label{subsec:stability}

The following theorem characterizes the stability of the algorithm around a solution.
The subsequent stepsize condition encompasses the stepsizes employed in \cref{thm:dsseg-local-as} and \cref{thm:dsseg-local} as special cases.
We recall that $\interdet=\current-\current[\step]\vecfield(\current)$.

\begin{theorem}
\label{thm:dsseg-local-stability}
Let $\sol$ be an isolated solution of \eqref{eq:zero} such that \cref{asm:Lipschitz-loc,asm:noise-loc,asm:mono-loc} are satisfied on $\ballr{\sol}{\nhdradius}$ for some $\probmoment>2, \nhdradius>0$.
We fix a tolerance level $\smallproba\in(0,1)$.
For every $\consalt\in(0,1)$, there is a neighborhood $\nhd_\consalt$ of $\sol$ and a constant $\seriesbound>0$ such that if \eqref{eq:DSEG} is initialized at $\state_\start\in\nhd_\consalt$ and 
is run with stepsizes satisfying $\sum_{\run}\current[\step]\current[\stepalt]=\infty$, $\sum_{\run}\current[\stepalt]^2<\seriesbound$, $\sum_{\run}\current[\step]^2\current[\stepalt]<\seriesbound$ and $\sum_{\run}\current[\step]^\probmoment<\seriesbound$, then
\begin{equation}
\eventI_{\infty}^{\consalt}
	= \{\inter\in\ballr{\sol}{\nhdradius}, \current,\interdet\in\ballr{\sol}{\consalt\nhdradius}
	\enspace\text{for all $\run=\running$}\} \notag
\end{equation}
occurs with probability at least $1-\smallproba$, \ie $\probof{\eventI_{\infty}^{\consalt}\given{\state_{\start}\in\nhd_{\consalt}}} \ge 1-\smallproba$.
\end{theorem}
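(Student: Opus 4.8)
The plan is to realize the stability claim as a single application of the recursive–process bound \cref{lem:recursive-stoch}, with $\current[\srv]\defeq\norm{\current-\sol}^2$ as the tracked quantity. First I would read off the four adapted processes from the \emph{unconditional} estimate \eqref{eq:dsseg-descent-stoch-noexp-nolips}, which holds pathwise for every $\run$ and every $\sol\in\sols$: taking
\[
\current[\srvm] = 2\current[\stepalt]\product{\vecfield(\inter)}{\inter-\sol}
    + 2\current[\step]\current[\stepalt]\norm{\vecfield(\current)}\bigl(\norm{\vecfield(\current)}-\norm{\vecfield(\interdet)-\vecfield(\current)}\bigr),
\]
\[
\update[\snoise] = -2\current[\stepalt]\product{\inter[\noise]}{\current-\sol}
    - 2\current[\step]\current[\stepalt]\product{\vecfield(\interdet)}{\current[\noise]},
\]
and letting $\update[\srvp]$ gather the remaining non-negative terms $2\current[\step]\current[\stepalt]\norm{\current[\svecfield]}\norm{\vecfield(\inter)-\vecfield(\interdet)}+\current[\stepalt]^2\norm{\inter[\svecfield]}^2$, inequality \eqref{eq:dsseg-descent-stoch-noexp-nolips} is exactly the recursion $\update[\srv]\le\current[\srv]-\current[\srvm]+\update[\srvp]+\update[\snoise]$ demanded by the lemma (the index shift making $\current[\srvp],\current[\snoise]$ adapted to $\current[\filter]$). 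I would then fix a threshold $\Cons$ slightly below $(\consalt\nhdradius)^2$, namely $\Cons=\bigl(\consalt\nhdradius/(1+\sup_\run\current[\step]\lips)\bigr)^2$, and set $\nhd_\consalt=\ballr{\sol}{\sqrt{\Cons/2}}$, so that initializing in $\nhd_\consalt$ is precisely the event $\eventA_\start=\{\srv_\start\le\Cons/2\}$.

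Next I would check that, on $\current[\event]=\bigcap_{\runalt\le\run}\eventA_\runalt$, all three iterates sit where the local assumptions bite. From $\current[\srv]\le\Cons$ we get $\current\in\ballr{\sol}{\consalt\nhdradius}$; since $\vecfield(\sol)=0$, \cref{asm:Lipschitz-loc} gives $\norm{\interdet-\sol}\le(1+\current[\step]\lips)\norm{\current-\sol}\le\consalt\nhdradius$ by the choice of $\Cons$, so $\interdet\in\ballr{\sol}{\consalt\nhdradius}$. Granting momentarily that $\inter\in\ballr{\sol}{\nhdradius}$, hypothesis (i) follows: \cref{asm:mono-loc} makes the first summand of $\current[\srvm]$ non-negative, while \cref{asm:Lipschitz-loc} bounds $\norm{\vecfield(\interdet)-\vecfield(\current)}\le\current[\step]\lips\norm{\vecfield(\current)}$ and hence the second summand is at least $2\current[\step]\current[\stepalt](1-\current[\step]\lips)\norm{\vecfield(\current)}^2\ge0$ once the stepsizes are small. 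For (ii), I would use that $\vecfield(\interdet)$ and $\current-\sol$ are $\current[\filter]$-measurable and peel the two noise inner products with the tower property, invoking the local unbiasedness \eqref{eq:mean-loc} at $\current$ and at $\inter$ — legitimate because $\current,\inter\in\nhd$ on $\current[\event]$.

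Hypothesis (iii) is the quantitative core. On $\current[\event]$ the iterates are confined to $\ballr{\sol}{\nhdradius}$, so $\norm{\vecfield(\current)},\norm{\vecfield(\interdet)},\norm{\current-\sol}$ are all $\bigoh(\nhdradius)$. Expanding $\update[\snoise]^2$ and $\update[\srvp]$, bounding $\norm{\current[\svecfield]}^2\le2\norm{\vecfield(\current)}^2+2\norm{\current[\noise]}^2$ (likewise for $\inter[\svecfield]$), and taking conditional expectations with the second-moment bound $\exof{\norm{\current[\noise]}^2\given\current[\filter]}\le\noisedev^2$ (obtained from \eqref{eq:variance-loc} in its simplified bounded-neighborhood form via Jensen), each summand is dominated by $C\,(\current[\stepalt]^2+\current[\step]^2\current[\stepalt])$ with $C=C(\nhdradius,\lips,\noisedev)$ independent of the stepsizes. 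The assumptions $\sum_\run\current[\stepalt]^2<\seriesbound$ and $\sum_\run\current[\step]^2\current[\stepalt]<\seriesbound$ then yield $\sum_\run\exof{(\update[\snoise]^2+\update[\srvp])\one_{\current[\event]}}\le2C\seriesbound$, which I would force below $\tfrac12\smallproba\,\noisebound\,\prob(\eventA_\start)$ by shrinking $\seriesbound$; this pins down the admissible $\seriesbound$.

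The step I expect to be the genuine obstacle is discharging the standing assumption that the leading state $\inter$ never leaves $\ballr{\sol}{\nhdradius}$ — exactly the coupling flagged in the paper, since $\inter=\interdet-\current[\step]\current[\noise]$ and the noise is controlled only in $\probmoment$-th moment, not almost surely. Because $\interdet\in\ballr{\sol}{\consalt\nhdradius}$ on the event, $\inter$ escapes the larger ball only if $\current[\step]\norm{\current[\noise]}>(1-\consalt)\nhdradius$, and a union bound with Markov's inequality at order $\probmoment$ gives
\[
\prob\Bigl(\textstyle\bigcup_\run\{\current[\step]\norm{\current[\noise]}>(1-\consalt)\nhdradius\}\Bigr)
\le \frac{\noisedev^{\probmoment}}{((1-\consalt)\nhdradius)^{\probmoment}}\sum_\run\current[\step]^{\probmoment}
\le \frac{\noisedev^{\probmoment}}{((1-\consalt)\nhdradius)^{\probmoment}}\,\seriesbound,
\]
which is the sole reason the hypothesis $\sum_\run\current[\step]^{\probmoment}<\seriesbound$ with $\probmoment>2$ is imposed; a further shrink of $\seriesbound$ makes this at most $\tfrac12\smallproba$. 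The delicate part is that this confinement event is precisely what makes the sign and zero-mean structure of (i)–(ii) valid on $\current[\event]$, so the two controls are not independent: the indicator design of \cref{lem:recursive-stoch} is tailored to charge the rare leading-state excursions against the $\probmoment$-th moment budget without corrupting the recursion on the good event. Combining the lemma's output $\bigcap_\run\current[\eventA]$ with the confinement event, I would obtain $\probof{\bigcap_\run\current[\eventA]\given\state_\start\in\nhd_\consalt}\ge1-\smallproba$; finally, on $\bigcap_\run\current[\eventA]$ one has $\current,\interdet\in\ballr{\sol}{\consalt\nhdradius}$ and $\inter\in\ballr{\sol}{\nhdradius}$ for every $\run$, which is exactly $\eventI_{\infty}^{\consalt}$, completing the argument.
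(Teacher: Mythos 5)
Your overall architecture \textendash\ reducing the stability claim to \cref{lem:recursive-stoch} applied to $\norm{\current-\sol}^2$ through the pathwise inequality \eqref{eq:dsseg-descent-stoch-noexp-nolips} \textendash\ is the same as the paper's, and your reading of the role of $\sum_{\run}\current[\step]^{\probmoment}<\seriesbound$ (charging rare leading-state excursions) is correct in spirit. But there is a genuine gap in how you handle the confinement of $\inter$, and it is not a patchable detail: it invalidates the verification of the lemma's hypotheses. In your instantiation, the events $\current[\eventA]$ are built only from thresholds on $\current[\srv]=\norm{\current-\sol}^2$ and on your $\current[\srvp]$, which contains no control of the exploration noise; hence $\current[\event]$ does not imply $\inter\in\ballr{\sol}{\nhdradius}$. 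Yet hypothesis (i) needs $\product{\vecfield(\inter)}{\inter-\sol}\ge 0$, which \cref{asm:mono-loc} supplies only when $\inter$ lies in the ball, and hypothesis (ii) needs the conditional mean of $\inter[\noise]$ to vanish, which \eqref{eq:mean-loc} supplies only on $\{\inter\in\nhd\}$; both can fail off the confinement event. You cannot ``grant momentarily'' the confinement while verifying (i)\textendash(ii) and then restore it by intersecting with a separately-controlled event afterwards: the lemma requires its hypotheses to hold on its own events $\current[\event]$, exactly as stated, and your events simply do not carry the needed information.

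The external union bound you propose to close this loop is itself circular: Markov's inequality at order $\probmoment$ requires $\exof{\norm{\current[\noise]}^{\probmoment}\given\current[\filter]}\le\noisedev^{\probmoment}$, but \eqref{eq:variance-loc} gives this only on $\{\current\in\nhd\}$, i.e.\ only on the very confinement event you are trying to establish. The paper's resolution is to weave the noise control \emph{into} the lemma's events by indexing the processes by half-integers: it introduces the extra non-negative process $\inter[\srvp]\defeq\current[\step]^{\probmoment}\norm{\current[\noise]}^{\probmoment}$, so that the half-integer event $\eventA_{\run+1/2}$ contains $\{\current[\step]^{\probmoment}\norm{\current[\noise]}^{\probmoment}\le\Cons/4\}$; with the choice $\Cons=\min\bigl((\consalt\nhdradius)^2/9,\,4(\consalt\nhdradius/3)^{\probmoment}\bigr)$ this forces $\inter\in\ballr{\sol}{\consalt\nhdradius}$ on the event, making (i) and (ii) verifiable there, while the probability that the exploration noise ever exceeds its threshold is charged through hypothesis (iii) via $\ex[\inter[\srvp]\one_{\current[\event]}]\le\current[\step]^{\probmoment}\noisedev^{\probmoment}$ \textendash\ a bound that is legitimate precisely because the previous event already guarantees $\current\in\ballr{\sol}{\nhdradius}$. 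This internal charging mechanism is what your sketch attributes to the ``indicator design'' of the lemma, but your own instantiation does not implement it: to repair the proof you would have to either adopt the half-integer decomposition (splitting the decrement and noise terms across indices $\run$ and $\run+1/2$ and adding the $\probmoment$-th power noise process), or re-prove a variant of \cref{lem:recursive-stoch} whose events additionally carry the noise threshold.
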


\begin{proof}
We would like to apply \cref{lem:recursive-stoch}, but instead of indexing by $\run\in\N$, we index by $\runalt\in\N/2$.
We invoke \eqref{eq:dsseg-descent-stoch-noexp-nolips} from \cref{lem:dsseg-descent-stoch-noexp} and set the random variables accordingly
\begin{align}
    \underbrace{\norm{\update-\sol}^2}_{\update[\srv]}
        & \le
        \underbrace{\norm{\current-\sol}^2}_{\current[\srv]}
        - 
        \underbrace{2\current[\stepalt]\product{\vecfield(\inter)}{\inter-\sol}}_{\inter[\srvm]} \notag \\
        &~~-
        \underbrace{2\current[\step]\current[\stepalt]\norm{\vecfield(\current)}(\norm{\vecfield(\current)}-\norm{\vecfield(\interdet)-\vecfield(\current)})}_{\current[\srvm]} \notag \\
        & ~~
        +\underbrace{(- 2\current[\stepalt]\product{\inter[\noise]}{\current-\sol})}_{\update[\snoise]}
        +\underbrace{(-2\current[\step]\current[\stepalt]
        \product{\vecfield(\interdet)}{\current[\noise]})}_{\inter[\snoise]} \notag \\
        &~~
        + \underbrace{2\current[\step]\current[\stepalt]
        \norm{\current[\svecfield]}
        \norm{\vecfield(\inter)-\vecfield(\interdet)}
        + \current[\stepalt]^2\norm{\inter[\svecfield]}^2}_{\update[\srvp]}
    \label{eq:def-rvs}
\end{align}

We additionally define $\inter[\srvp]\defeq\current[\step]^{\probmoment}\norm{\current[\noise]}^{\probmoment}$ and $\inter[\srv] \defeq \current[\srv] - \current[\srvm] + \inter[\srvp] + \inter[\snoise]$ so that \eqref{eq:def-rvs} implies $\update[\srv] \le \inter[\srv] - \inter[\srvm] + \update[\srvp] + \update[\snoise]$. With the definition of $\inter[\srv]$ the inequality  \eqref{eq:stoch-rec-sum} is indeed checked with all half integers.
We should now verify that the assumptions \emph{(i)}, \emph{(ii)} and \emph{(iii)} in \cref{lem:recursive-stoch}
are satisfied for a $\Cons$ that is properly chosen.
Let $\vbound$ denote the supremum of $\norm{\vecfield(\point)}$ for $\point\in\nhdalt$
where $\nhdalt=\ballr{\sol}{\alt\nhdradius}$ and $\alt\nhdradius\defeq\consalt\nhdradius$.
We then choose $\Cons\defeq\min(\alt\nhdradius^2/9, 4(\alt\nhdradius/3)^{\probmoment})$.
We also set $\seriesbound$ small enough to guarantee $\current[\step]\le\min(\alt\nhdradius/(3\vbound),1/\lips)$.

\vspace{0.2em}
(a.0) \textit{Inclusion} $\current[\event]\subset\{\current, \interdet \in \nhdalt\}$ \textit{and} $\inter[\event]\subset\{\current, \interdet, \inter \in \nhdalt\}$.\quad
Since $\current[\event]\subset\current[\eventA]$, for any realization of $\current[\event]$, we have $\norm{\current-\sol}^2\le\Cons\le\alt\nhdradius^2/9$.
It follows
\begin{align}
    \norm{\interdet-\sol}^2
    \le 2\norm{\current-\sol}^2 + 2\current[\step]^2\norm{\vecfield(\current)}^2
    \le \dfrac{2\alt\nhdradius^2}{9} + 2\step_{\run}^2\vbound^2
    \le \dfrac{4\alt\nhdradius^2}{9}. \notag
\end{align}
We have shown $\current[\event]\subset\{\current, \interdet \in \nhdalt\}$.
On the other hand,
$\inter[\event]\subset\current[\eventA]\intersect\inter[\eventA]
\subset\{\current[\srv]\le\Cons\}\intersect\thinspace\{\inter[\srvp]\le\Cons/4\}$.
Therefore for any realization of $\inter[\event]$,
\begin{equation}
    \current[\step]^{\probmoment}\norm{\current[\noise]}^{\probmoment}
    =\inter[\srvp]\le\dfrac{\Cons}{4}\le(\alt\nhdradius/3)^{\probmoment}. \notag
\end{equation}
Subsequently,
\begin{align}
    \norm{\inter-\sol}^2
    \le 3\norm{\current-\sol}^2 + 3\current[\step]^2\norm{\vecfield(\current)}^2 + 3\current[\step]^2\norm{\current[\noise]}^2
    \le \frac{\alt\nhdradius^2}{3} + \frac{\alt\nhdradius^2}{3} + 3\left(\frac{\alt\nhdradius}{3}\right)^2
    \le \alt\nhdradius^2. \notag
\end{align}
This proves $\inter[\event]\subset\{\current, \interdet, \inter \in \nhdalt\}$.

\vspace{0.2em}
(a.1) \textit{Assumption (i)}.\quad
We start by $\inter[\srvm]\one_{\inter[\event]}\ge0$.
This is true because $\inter[\event]\subset\{\inter\in\nhdalt\}$ and $\nhdalt\subset\ballr{\sol}{\nhdradius}$, which allows us
to apply \cref{asm:mono-loc} to obtain $\product{\vecfield(\inter)}{\inter-\sol}\ge0$
whenever $\inter[\event]$ occurs.
Similarly, by $\current[\event]\subset\{\current, \interdet \in \nhdalt\}$ and \cref{asm:Lipschitz-loc} we then have 
\begin{equation}
\current[\srvm]\one_{\current[\event]}\ge2\current[\step]\current[\stepalt](1-\current[\step]\lips)\norm{\vecfield(\current)}^2\ge0. \notag
\end{equation}

\vspace{0.2em}
(a.2) \textit{Assumption (ii)}.\quad Immediate from \eqref{eq:mean-loc}, (a.0) and the law of the total expectation.

\vspace{0.2em}
(a.3) \textit{Assumption (iii)}.\quad
By using that $\current[\event]\subset\{\interdet\in\nhdalt\}$ and $\current[\event]\subset\{\current\in\ballr{\sol}{\nhdradius}\}$ , we get
\begin{align}
    \notag
    \ex[\inter[\snoise]^2\one_{\current[\event]}]
    &\le 4\current[\step]^2\current[\stepalt]^2
    \ex[\norm{\vecfield(\interdet)}^2
    \one_{\current[\event]}
    \norm{\current[\noise]}^2
    \one_{\current[\event]}]\\
    &
    \le 4\current[\step]^2\current[\stepalt]^2\vbound^2
    \ex[\norm{\current[\noise]}^2\one_{
        \{\current\in\ballr{\sol}{\nhdradius}\}}]
    \le 4\current[\step]^2\current[\stepalt]^2\vbound^2\noisevar. \notag
\end{align}
For the last inequality we use \eqref{eq:variance-loc} and Jensen's inequality to bound 
$\ex[\norm{\current[\noise]}^2\one_{
        \{\current\in\ballr{\sol}{\nhdradius}\}}]$.
Similarly, 
\begin{align}
\ex[\norm{\current[\noise]}\one_{\{\current\in\ballr{\sol}{\nhdradius}\}}] &\le\noisedev,\notag\\
\ex[\norm{\inter[\noise]}^2\one_{\{\inter\in\ballr{\sol}{\nhdradius}\}}] &\le\noisevar.\notag
\end{align}
%
Using $\inter[\event]\subset\{\current, \interdet, \inter\in\nhdalt\}$ and \cref{asm:Lipschitz-loc} then gives
\begin{align}
    \ex[\update[\srvp]\one_{\inter[\event]}]
    &\le
    2\current[\step]^2\current[\stepalt]\lips
    \ex[\norm{\current[\noise]}
    (\norm{\vecfield(\current)}+\norm{\current[\noise]})\one_{\inter[\event]}] \notag \\
    &~~~~
    + \current[\stepalt]^2\ex[(\norm{\vecfield(\inter)}^2
    +\norm{\inter[\noise]}^2)\one_{\inter[\event]}] \notag
    \\
    &\le
    2\current[\step]^2\current[\stepalt]\lips(
    \ex[\norm{\current[\noise]}^2\one_{\{\current\in\ballr{\sol}{\nhdradius}\}}]
    + \ex[\norm{\current[\noise]}\one_{\{\current\in\ballr{\sol}{\nhdradius}\}}
        \norm{\vecfield(\current)}\one_{\{\current\in\nhdalt\}}]) \notag \\
    &~~~~+ \current[\stepalt]^2(\ex[\norm{\vecfield(\inter)}^2\one_{\{\inter\in\nhdalt\}}] +
    \ex[\norm{\inter[\noise]}^2\one_{\{\inter\in\ballr{\sol}{\nhdradius}\}}]) \notag
    \\
    &\le
     2\current[\step]^2\current[\stepalt]\lips(\vbound\noisedev+\noisevar)
      + \current[\stepalt]^2(\vbound^2+\noisevar).
    \label{eq:local-stay-product-bound}
\end{align}
By similar arguments and in particular by invoking $\inter[\event]\subset\{\current[\srv]\le\Cons\}$ and the definition of $\Cons$, it follows
\begin{align}
    \ex[\update[\snoise]^2\one_{\inter[\event]}]
    &\le \frac{4}{9}\current[\stepalt]^2\alt\nhdradius^2\noisevar, \notag
\end{align}
Combining the above with $\ex[\inter[\srvp]\one_{\current[\event]}]\le\current[\step]^{\probmoment}\noisedev^{\probmoment}$, we have
\begin{align}
    &\sum_{\runalt\in1,3/2,\dotsc}(
        \inter[\snoise][\runalt]^2+\inter[\srvp][\runalt])
        \one_{\current[\event][\runalt]} \notag \\
    &~~~~~~~~~~~~~\le
    \sum_{\run=1}^{\infty}
    \left(
    \current[\step]^{\probmoment}\noisedev^{\probmoment}
    + 2\current[\step]^2\current[\stepalt]\lips(\vbound\noisedev+\noisevar)
    + 4\current[\step]^2\current[\stepalt]^2\vbound^2\noisevar
    + \current[\stepalt]^2(\vbound^2+\noisevar+\frac{4}{9}\alt\nhdradius^2\noisevar)\right) \notag \\
    &~~~~~~~~~~~~~\le
    \left(\noisedev^{\probmoment}
    + 2\lips(\vbound\noisedev+\noisevar)
    + \frac{4}{\lips}\vbound^2\noisevar
    + \vbound^2 + \noisevar + \frac{4}{9}\alt\nhdradius^2\noisevar\right)\seriesbound. \notag
\end{align}
We can thus pick $\seriesbound$ small enough to make \emph{(iii)} verified.

\vspace{0.2em}
(a.4) \textit{Conclude}.\quad
We set $\nhd_{\consalt}=\ballr{\sol}{\sqrt{\Cons/2}}$ so that $\eventA_{\start} = \{\state_{\start}\in\nhd_{\consalt}\}$.
By invoking \cref{lem:recursive-stoch} we get
$\prob\left(\bigcap_{\run\ge\start}\current[\eventA]~\vert~\eventA_{\start}\right) \ge 1-\smallproba$.
Additionally, (a.0) along with $\nhdalt\subset\ballr{\sol}{\nhdradius}$ imply
$\bigcap_{\run\ge\start}\current[\eventA]\subset\eventI_{\infty}^{\consalt}$, concluding the proof.
\end{proof}



\subsection{Proof of \cref{thm:dsseg-local-as}}
\label{subsec:app-local-as}

\ThmLocalAs*

\begin{proof}
Let $\nhdradius>0$, $\consalt\in(0,1)$.
By \cref{thm:dsseg-local-stability}, we know
that if $\eqref{eq:DSEG}$ is run as stated in \cref{thm:dsseg-local-as} with $\exponent_{\step}+\exponent_{\stepalt} \leq 1$, $2\exponent_{\stepalt}>1$, $2\exponent_{\step}+\exponent_{\stepalt}>1$, $\exponent_{\step}\probmoment>1$ and small enough $\step$, $\stepalt$, the event $\eventI_{\infty}^{\consalt}$ occurs with probability $1-\smallproba$.
With this at hand we are ready to prove the large probability convergence result.
For $\run\in\N$, let us define the following events
\begin{align}
\current[\eventI] & \defeq
    \{\current[\state][\runalt],\inter[\tilde{\state}][\runalt]\in\ballr{\sol}{\consalt\nhdradius}\,\text{for all $\runalt=1, 2, \dots, \run$}\} \notag \\
\inter[\eventI] & \defeq
    \current[\eventI] \intersect\thinspace\{\inter[\state][\runalt]\in\ballr{\sol}{\nhdradius} \,\text{for all $\runalt=1, 2, \dots, \run$}\}. \notag
\end{align}
We notice that $\eventI_{\infty}^{\consalt}=\bigcap_{\run\ge1}\inter[\eventI]$.
We would like to establish a recursive inequality in the form of \eqref{eq:Robbins-Siegmund} by taking $\current[U]=\norm{\current-\sol}\one_{\past[\eventI]}$.
The main difficulty consists in controlling the term
$\exoft[\product{\vecfield(\interdet)}{\current[\noise]}\one_{\inter[\eventI]}]$,
which is generally non-zero as $\one_{\inter[\eventI]}$ is not $\current[\filter]$-measurable.
To achieve this, we rely on the following key observation.
\begin{equation}
    \exoft[\current[\noise]\one_{\current[\eventI]}] =
    \exoft[\current[\noise]\one_{\inter[\eventI]}] + \exoft[\current[\noise]\one_{\setexclude{\current[\eventI]}{\inter[\eventI]}}]. \notag
\end{equation}
As $\one_{\current[\eventI]}$ is $\current[\filter]$-measurable and $\current[\eventI]\subset\{\current\in\ballr{\sol}{\nhdradius}\}$, $\exoft[\current[\noise]\one_{\current[\eventI]}]$ is indeed zero and this implies
\begin{equation}
\label{eq:local-rate-ex-eq}
    \norm{\exoft[\current[\noise]\one_{\inter[\eventI]}]} = \norm{\exoft[\current[\noise]\one_{\setexclude{\current[\eventI]}{\inter[\eventI]}}]}.
\end{equation}
The problem then reduces to finding an upper bound of
$\norm{\exoft[\current[\noise]\one_{\setexclude{\current[\eventI]}{\inter[\eventI]}}]}$.
By definition, for any realization of $\setexclude{\current[\eventI]}{\inter[\eventI]}$,
$\interdet\in\ballr{\sol}{\consalt\nhdradius}$ and $\inter\notin\ballr{\sol}{\nhdradius}$.
Since $\inter=\interdet-\current[\step]\current[\noise]$, we deduce
\begin{equation}
    \setexclude{\current[\eventI]}{\inter[\eventI]} \subset \{\norm{\current[\step]\current[\noise]}\ge(1-\consalt)\nhdradius\}. \notag
\end{equation}
Therefore, using $\current[\eventI]\subset\{\current\in\ballr{\sol}{\nhdradius}\}$ along with the Chebyshev's inequality yields
\begin{equation}
    \probof{\setexclude{\current[\eventI]}{\inter[\eventI]}\given{\current[\filter]}}
    \le
    \prob\left(
    \norm{\current[\noise]}\one_{\{\current\in\ballr{\sol}{\nhdradius}\}}
    \ge\frac{(1-\consalt)\nhdradius}{\current[\step]}\mid\current[\filter]
    \right)
    \le \frac{\noisevar\current[\step]^2}{(1-\consalt)^2\nhdradius^2}. \notag
\end{equation}
Applying the Cauchy\textendash Schwarz inequality leads to
\begin{equation}
\label{eq:local-rate-ex-upbound}
    \norm{\exoft[\current[\noise]\one_{\setexclude{\current[\eventI]}{\inter[\eventI]}}]}
    \le
    \sqrt{\exoft[\norm{\current[\noise]\one_{\current[\eventI]}}^2]}
    \sqrt{\exoft[\one_{\setexclude{\current[\eventI]}{\inter[\eventI]}}^2]}
    \le \frac{\noisevar\current[\step]}{(1-\consalt)\nhdradius}.
\end{equation}
Then, by using \eqref{eq:local-rate-ex-eq}, \eqref{eq:local-rate-ex-upbound} and $\inter[\eventI]\subset\current[\eventI]$,
\begin{align}
\exoft[\product{\vecfield(\interdet)}{\current[\noise]}\one_{\inter[\eventI]}]
&= \exoft[\product{\vecfield(\interdet)\one_{\current[\eventI]}}{\current[\noise]\one_{\inter[\eventI]}}] \notag \\
&= \product{\vecfield(\interdet)\one_{\current[\eventI]}}{\exoft[\current[\noise]\one_{\inter[\eventI]}]} \notag \\
&\le \norm{\vecfield(\interdet)\one_{\current[\eventI]}} \norm{\exoft[\current[\noise]\one_{\inter[\eventI]}]} \notag \\
&\le \frac{\vbound\noisevar\current[\step]}{(1-\consalt)\nhdradius},
\label{eq:local-rate-complex-bound}
\end{align}
where $\vbound\defeq\sup_{\point\in\ballr{\sol}{\nhdradius}}\norm{\vecfield(\point)}$.
We can now derive a recursive bound on $\ex[\norm{\update-\sol}\one_{\inter[\eventI]}]$ by invoking \cref{lem:dsseg-descent-stoch-noexp}. 
The inequality $\eqref{eq:dsseg-descent-stoch-noexp}$ multiplied by $\one_{\inter[\eventI]}$ holds true by definition of $\inter[\eventI]$ and \cref{asm:Lipschitz-loc}.
The desired inequality can then be obtained by taking expectation conditioned on $\current[\filter]$.
On the one hand, we use
\begin{gather}
    \product{\vecfield(\inter)}{\inter-\sol}\one_{\inter[\eventI]}\ge0 \notag \\
    \exoft[{\product{\inter[\noise]}{\current-\sol}}\one_{\inter[\eventI]}]
    = \exoft[\product{\ex_{\run+\frac{1}{2}}[\inter[\noise]]\one_{\inter[\eventI]}}{\current-\sol}]=0. \notag
\end{gather}

On the other hand, the last two terms of \eqref{eq:dsseg-descent-stoch-noexp} can be bounded similarly as in
\eqref{eq:local-stay-product-bound} and the antepenultimate term can now be bounded thanks to \eqref{eq:local-rate-complex-bound}.
We then obtain
\begin{align}
    \exoft[\norm{\update-\sol}^2\one_{\inter[\eventI]}]
        & \le \exoft[\norm{\current-\sol}^2\one_{\inter[\eventI]}]
        - 0
        - 2\current[\step]\current[\stepalt](1-\current[\step]\lips)
        \exoft[\norm{\vecfield(\current)}^2\one_{\inter[\eventI]}] \notag \\
        & ~~ - 0
        + 2\current[\step]^2\current[\stepalt]
        \frac{\vbound\noisevar}{(1-\consalt)\nhdradius}
        + \current[\stepalt]^2(\vbound^2+\noisevar)
        + 2\current[\step]^2\current[\stepalt]\lips (\vbound\noisedev + \noisevar).
    \label{eq:stoch-local-condition-rec}
\end{align}
Without loss of generality we may suppose $\current[\step]\lips\le1/2$.
To simplify the notation, we set
\begin{align}
    \current[\srvm] =
    \min\left(\norm{\current-\sol}^2, 
    \current[\step]\current[\stepalt]\norm{\vecfield(\current)}^2\right),
    \enspace
    \boundalt_1
    =
    2\frac{\vbound\noisevar}{(1-\consalt)\nhdradius}+2\lips(\vbound\noisedev + \noisevar),
    \enspace
    \boundalt_2
    =
    \vbound^2+\noisevar. \notag
\end{align}
It follows from \eqref{eq:stoch-local-condition-rec}
\begin{align}
    \exoft[\norm{\update-\sol}^2\one_{\inter[\eventI]}]
        \le \exoft[(\norm{\current-\sol}^2
        - \current[\srvm])
        \one_{\inter[\eventI]}]
        + \current[\step]^2\current[\stepalt]
        \boundalt_1
        + \current[\stepalt]^2\boundalt_2. \notag
\end{align}
As $\norm{\current-\sol}^2 - \current[\srvm]\ge0$ and $\inter[\eventI]\subset\past[\eventI]$, this implies
\begin{align}
    \exoft[\norm{\update-\sol}^2\one_{\inter[\eventI]}]
        \le \norm{\current-\sol}^2\one_{\past[\eventI]}
        - \current[\srvm]\one_{\past[\eventI]}
        + \current[\step]^2\current[\stepalt]
        \boundalt_1
        + \current[\stepalt]^2\boundalt_2. \notag
\end{align}
Invoking the Robbins\textendash Siegmund theorem (\cref{lem:Robbins-Siegmund}) gives the almost sure convergence of $\sum_\run\current[\srvm]\one_{\past[\eventI]}$ and $\norm{\current-\sol}^2\one_{\past[\eventI]}$.
We use $\prob(\eventI_{\infty}^{\consalt})>1-\smallproba$ and deduce that
\[
\prob\left(\underbrace{
\eventI_{\infty}^{\consalt}
\intersect\left\{\sum_{\run=1}^{\infty}\current[\srvm]\one_{\past[\eventI]}<\infty\right\}
\intersect\left\{\norm{\current-\sol}^2\one_{\past[\eventI]} converges\right\}}_{\singleevent}
\right)\ge1-\smallproba. \notag
\]
Since $\eventI_{\infty}^{\consalt}=\bigcap_{\run\ge1}\inter[\eventI]$, for any realization of the above event it holds $\sum_\run\current[\srvm]<\infty$ and $\norm{\current-\sol}^2$ converges.
We assume by contradiction that $\norm{\current-\sol}^2$ converges to some constant $\distval>0$.
From the summability of $\seqinf{\srvm}{\run}$ we know that $\current[\srvm]\to0$ and therefore for all $\run$ large enough we have in fact $\current[\srvm] = \current[\step]\current[\stepalt]\norm{\vecfield(\current)}^2$.
It follows that $\sum_\run\current[\step]\current[\stepalt]\norm{\vecfield(\current)}^2 < \infty$.
Repeating the arguments of \cref{app-sub:proof-as} (Proof of \cref{thm:dsseg-general-as}) we then show that $\norm{\current-\sol}\to0$, which is a contradiction (we take $\nhdradius$ small enough so that $\sol$ is the only solution of \eqref{eq:zero} in $\ballr{\sol}{\nhdradius}$).
We have therefore proved that $\norm{\current-\sol}\to0$ for any realization of $\singleevent$.
In conclusion, $\current$ converges to $\sol$ with probability at least $1-\smallproba$.
\end{proof}


\subsection{Proof of \cref{prop:local-err-bound}}

{
\renewcommand{\theproposition}{\ref{prop:local-err-bound}}%
\begin{proposition}
\label{prop:local-err-bound-2}
If a solution $\sol$ satisfies \cref{asm:err-bound-loc}, then for every $\smallcst>0$, there is a neighborhood $\nhd$ of $\sol$ such that the error bound condition \eqref{eq:err-bound} is satisfied on $\nhd$ with constant $\errbcst=\sing_{\min}-\smallcst$ where $\sing_{\min}$ denotes the smallest singular value of $\Jacf{\vecfield}{\sol}$.
\end{proposition}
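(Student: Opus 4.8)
The plan is to prove the local error bound by a first-order Taylor expansion of $\vecfield$ at $\sol$ together with the variational (Courant–Fischer) characterization of the smallest singular value of the Jacobian. Since $\vecfield(\sol)=0$ and $\vecfield$ is differentiable at $\sol$ by \cref{asm:err-bound-loc}, I would write
\begin{equation}
\vecfield(\point) = \Jacf{\vecfield}{\sol}(\point-\sol) + \remainder(\point), \notag
\end{equation}
where the remainder satisfies $\norm{\remainder(\point)}/\norm{\point-\sol}\to 0$ as $\point\to\sol$. The point is that $\dist(\point,\sols)\le\norm{\point-\sol}$ (since $\sol\in\sols$), so it suffices to lower bound $\dnorm{\vecfield(\point)}$ by a multiple of $\norm{\point-\sol}$ on a small neighborhood.

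The main linear-algebra input is that, because $\Jacf{\vecfield}{\sol}$ is invertible (again \cref{asm:err-bound-loc}), its smallest singular value $\sing_{\min}$ is strictly positive, and by the minimax characterization of singular values one has $\norm{\Jacf{\vecfield}{\sol}\vvec}\ge\sing_{\min}\norm{\vvec}$ for every vector $\vvec$. Applying this with $\vvec=\point-\sol$ and combining with the triangle inequality gives
\begin{align}
\dnorm{\vecfield(\point)}
	&\ge \norm{\Jacf{\vecfield}{\sol}(\point-\sol)} - \norm{\remainder(\point)} \notag\\
	&\ge \sing_{\min}\norm{\point-\sol} - \norm{\remainder(\point)}. \notag
\end{align}

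To finish, I would fix $\smallcst>0$ and use the definition of the remainder: there is a neighborhood $\nhd$ of $\sol$ on which $\norm{\remainder(\point)}\le\smallcst\norm{\point-\sol}$. On $\nhd$ this yields $\dnorm{\vecfield(\point)}\ge(\sing_{\min}-\smallcst)\norm{\point-\sol}\ge(\sing_{\min}-\smallcst)\dist(\point,\sols)$, which is exactly \eqref{eq:err-bound} with $\errbcst=\sing_{\min}-\smallcst$ (choosing $\smallcst<\sing_{\min}$ so that $\errbcst>0$). The one subtlety to handle carefully — and the closest thing to an obstacle, though it is minor — is making sure I bound $\dist(\point,\sols)$ rather than just $\norm{\point-\sol}$: this is immediate here since $\sol$ is itself a solution, so the distance to the whole solution set is no larger than the distance to $\sol$. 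No monotonicity or Lipschitz continuity beyond differentiability at the single point $\sol$ is needed, which is why the conclusion is genuinely pointwise.
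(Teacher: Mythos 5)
Your proof is correct and follows essentially the same route as the paper's: a first-order Taylor expansion of $\vecfield$ at $\sol$ (using $\vecfield(\sol)=0$) combined with the minimax characterization of the smallest singular value of $\Jacf{\vecfield}{\sol}$, then absorbing the $\smalloh(\norm{\point-\sol})$ remainder into the $\smallcst$ slack on a small enough neighborhood. The only difference is cosmetic and slightly in your favor: the paper concludes via the equality $\dist(\point,\sols)=\norm{\point-\sol}$ on a sufficiently small neighborhood (which implicitly uses that $\sol$ is an isolated solution), whereas you only invoke the trivial inequality $\dist(\point,\sols)\le\norm{\point-\sol}$, which is all that is needed since \eqref{eq:err-bound} asks for a lower bound on $\dnorm{\vecfield(\point)}$.
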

}
\begin{proof}
By definition of Jacobian we have
\begin{equation}
    \label{eq:taylor}
    \vecfield(\point) = \vecfield(\sol) + \Jacf{\vecfield}{\sol}(\point-\sol) + \smalloh(\norm{\point-\sol}).
\end{equation}
By the min-max principle of singular value it holds
\begin{equation}
    \label{eq:singular-value}
    \norm{\Jacf{\vecfield}{\sol}(\point-\sol)} \ge \sing_{\min}\norm{\point-\sol}.
\end{equation}
Since $\vecfield(\sol)=0$, combining \eqref{eq:taylor} and \eqref{eq:singular-value} gives
\begin{equation}
    \norm{\vecfield(\point)} \ge \sing_{\min}\norm{\point-\sol} - \smalloh(\norm{\point-\sol}). \notag
\end{equation}
We conclude by noticing $\dist(\point,\sols)=\norm{\point-\sol}$ when $\nhd$ is small enough.
\end{proof}


\subsection{Proof of \cref{thm:dsseg-local}}


\ThmLocalRate*

\begin{proof}
Both \ref{dsseg-local-a} and \ref{dsseg-local-b}
are direct consequences of \cref{thm:dsseg-local-stability}.
In effect, since $\probmoment>3$, the sum of the series $\sum_{\run}\current[\stepalt]^2$, $\sum_{\run}\current[\step]^2\current[\stepalt]$ and $\sum_{\run}\current[\step]^\probmoment$ can be made arbitrarily small by taking sufficiently large $\stepoffset$.
Moreover, $\sol$ is an isolated solution because $\Jacf{\vecfield}{\sol}$ is non-singular.
Therefore, taking $\eventI_{\nhd}\defeq\eventI_{\infty}^{\consalt}$, $\nhd\defeq\nhd^{\consalt}$ and $\nhdalt\defeq\ballr{\sol}{\consalt\nhdradius}$ readily gives \ref{dsseg-local-a} and \ref{dsseg-local-b}.

Finally, to guarantee \ref{dsseg-local-c}, we need to have $\consalt$ small enough and enforce $\step\stepalt\sing_{\min}^2(1-\step_{\start}\lips)>1/6$.
In fact, from $\step\stepalt\sing_{\min}^2(1-\step_{\start}\lips)>1/6$ we deduce the existence of $\smallcst\in(0,\sing_{min})$ such that $\step\stepalt(\sing_{\min}-\smallcst)^2(1-\step_{\start}\lips)>1/6$.
Since $\Jacf{\vecfield}{\sol}$ is non-singular, by \cref{prop:local-err-bound-2} we can choose $\consalt>0$ so that the error bound condition \eqref{eq:err-bound} is satisfied on $\ballr{\sol}{\consalt\nhdradius}$ with $\errbcst=\sing_{\min}-\smallcst$.
Let $\boundalt_1$, $\boundalt_2$ be defined as in \cref{subsec:app-local-as}.
We then obtained from \eqref{eq:stoch-local-condition-rec}
\begin{align}
        \ex[\norm{\update-\sol}^2\one_{\inter[\eventI]}]
        \le (1-2\current[\step]\current[\stepalt]\errbcst^2(1-\current[\step]\lips))
        \ex[\norm{\current-\sol}^2\one_{\inter[\eventI]}]
        + \current[\step]^2\current[\stepalt] \mathcal{M}_1
        + \current[\stepalt]^2 \mathcal{M}_2. \notag
\end{align}
By using $\inter[\eventI]\subset\past[\eventI]$, we get
\begin{align}
        \ex[\norm{\update-\sol}^2\one_{\inter[\eventI]}]
        \le (1-2\current[\step]\current[\stepalt]\errbcst^2(1-\current[\step]\lips))
        \ex[\norm{\current-\sol}^2\one_{\past[\eventI]}]
        + \current[\step]^2\current[\stepalt] \mathcal{M}_1
        + \current[\stepalt]^2 \mathcal{M}_2 \notag
\end{align}
Therefore, with the specified stepsize policy and the condition $\step\stepalt\errbcst^2(1-\step_{\start}\lips)>1/6$,
applying \cref{lem:chung1954} yields $\ex[\norm{\update-\sol}^2\one_{\inter[\eventI]}] = \bigoh(1/\run^{1/3})$. Finally
\begin{align}
    \exof{\norm{\state_{\nRunsNew} - \sol}^{2} \given \eventI_{\infty}^{\consalt}}
    = \frac{\ex[\norm{\state_{\nRunsNew} - \sol}^{2}\one_{\eventI_{\infty}^{\consalt}}]}{\prob(\eventI_{\infty}^{\consalt})}
    \le \frac{\ex[\norm{\state_{\nRunsNew} - \sol}^{2}{\one_{\past[\eventI]}]}}{1-\smallproba}, \notag
\end{align}
which proves $\exof{\norm{\state_{\nRunsNew} - \sol}^{2} \given  \eventI_{\infty}^{\consalt}}=\bigoh(1/\run^{1/3})$.
\end{proof}

\end{document}